% SIAM Article Template
\documentclass[final,onefignum,onetabnum]{siamart171218}

% Information that is shared between the article and the supplement
% (title and author information, macros, packages, etc.) goes into
% ex_shared.tex. If there is no supplement, this file can be included
% directly.

%\definecolor{darkgreen}{rgb}{0.01, 0.75, 0.24}
%\definecolor{chartreuse}{rgb}{0.5, 1.0, 0.0}
\definecolor{darkorchid}{rgb}{0.6, 0.2, 0.8}
%\newcommand{\noteHe}[1]{{\color{darkorchid}{#1}}}

% SIAM Shared Information Template
% This is information that is shared between the main document and any
% supplement. If no supplement is required, then this information can
% be included directly in the main document.

% Packages and macros go here
\usepackage{amsfonts}
\usepackage{graphicx}
\usepackage{epstopdf}
\usepackage{algorithmic}

\usepackage{subfig}
\usepackage{mathabx}
\usepackage{todonotes}

\usepackage{mathrsfs}     % selects Times Roman as basic font
\usepackage{helvet}         % selects Helvetica as sans-serif font
\usepackage{courier}        % selects Courier as typewriter font
\usepackage{type1cm}      % activate if the above 3 fonts are
\usepackage{color}
\usepackage{mathbbol}
\usepackage{float}

\usepackage{geometry,calc,color}
\usepackage{amsmath,amssymb}
\usepackage{enumerate}

\usepackage{soul}%for strikethrough 

\ifpdf
  \DeclareGraphicsExtensions{.eps,.pdf,.png,.jpg}
\else
  \DeclareGraphicsExtensions{.eps}
\fi

% Add a serial/Oxford comma by default.

% Used for creating new theorem and remark environments
\newsiamremark{remark}{Remark}
%\newsiamremark{example}{Example}
\newsiamremark{hypothesis}{Hypothesis}
\crefname{hypothesis}{Hypothesis}{Hypotheses}
\newsiamthm{claim}{Claim}

% Sets running headers as well as PDF title and authors
\headers{An a posteriori error estimate of the outer normal derivative
  using dual weights}{S. Bertoluzza, E. Burman, and C. He}

% Title. If the supplement option is on, then "Supplementary Material"
% is automatically inserted before the title.
\title{An a posteriori error estimate of the outer normal derivative
  using dual weights\thanks{Submitted to the editors of SIAM Journal of Numerical Analysis.
\funding{EB and CH were funded by the EPSRC grant EP/P01576X/1.}}}
% Authors: full names plus addresses.
\author{Silvia Bertoluzza\thanks{Istituto di Matematica Applicata e Tecnologie Informatiche, CNR, Italy,
  (\email{silvia.bertoluzza@imati.cnr.it}).}
\and Erik Burman\thanks{Department of  Mathematics, University College London, UK, 
  (\email{e.burman@ucl.ac.uk}).}
\and Cuiyu He \thanks{School of Mathematical and Statistical Sciences, University of Texas Rio Grand Valley, USA, (\email{cuiyu.he@utrgv.edu}).}}

\usepackage{amsopn}

\renewcommand{\O}{\Omega}
\newcommand{\G}{\Gamma}
\renewcommand{\l}{\lambda}
\renewcommand{\d}{\delta}

\newcommand{\m}{\mu}
\renewcommand{\L}{\Lambda}
\newcommand{\e}{\eta}
\newcommand{\A}{A}
\newcommand{\V}{{\cal V}}

\renewcommand{\a}{{a\,}}

\newcommand{\divagrad}{{ \grad\cdot a \,\grad}}

\newcommand{\dimension}{d}
\newcommand{\COmega}{C(\Omega)}
\newcommand{\tBi}{{\widetilde B_i}}

\newcommand{\z}{\zeta}
\newcommand{\Th}{{\mathcal T}_h}

\newcommand{\bE}{{{\mathcal F}_h^b}}

\renewcommand{\r}{\rho}

\newcommand{\cunoT}{C_1}
\newcommand{\cdueT}{C_2}
\newcommand{\resuno}{{\bold {r}}} 
\renewcommand{\theta}{\vartheta}

\newcommand{\grad}{\nabla}
\newcommand{\RR}{\mathbb{R}}

\newcommand{\ClemProj}{\widehat{\Pi}_h}

\newcommand{\patchT}{{\Delta_T}}
\newcommand{\bVertex}{\mathcal{N}^b_h}
\newcommand{\patchP}{{\Delta_P}}
\newcommand{\rescinque}{\mathbf{r}}

\newcommand{\ProjLtwo}{\pi_h}

\newcommand{\face}{{{F}}}

\newcommand{\iEdges}{{{\mathcal{F}_h^i}}}
\newcommand{\bEdges}{{{\mathcal{F}_h^b}}}
\newcommand{\jump}[1]{\Lbrack #1 \Rbrack}

%%% Local Variables: 
%%% mode:latex
%%% TeX-master: "ex_article"
%%% End: 

% Optional PDF information
\ifpdf
\hypersetup{
  pdftitle={An a posteriori error estimate of the outer normal derivative
  using dual weights},
  pdfauthor={S. Bertoluzza, E. Burman, and C. He}
}
\fi

% The next statement enables references to information in the
% supplement. See the xr-hyperref package for details.

%\externaldocument{ex_supplement}

% FundRef data to be entered by SIAM
%<funding-group>
%<award-group>
%<funding-source>
%<named-content content-type="funder-name"> 
%</named-content> 
%<named-content content-type="funder-identifier"> 
%</named-content>
%</funding-source>
%<award-id> </award-id>
%</award-group>
%</funding-group>

%20 pages limit
%\newcounter{example}[section]
%\newtheorem{theorem}{Theorem}[section]
%\newtheorem{corollary}{Corollary}[section]
%\newtheorem{lemma}{Lemma}[section]
%\newtheorem{proposition}{Proposition}[section]
%\newtheorem{conjecture}{Conjecture}[section]
%\newtheorem{remark}{Remark}[section]
%\newtheorem{definition}{Definition}[section]
%\newtheorem{problem}{Problem}[section]
\newtheorem{example}{Example}

\begin{document}

\maketitle

% REQUIRED
\begin{abstract}
We derive a residual based a-posteriori error estimate for the outer
  normal flux of approximations to {the diffusion problem with variable coefficient}. By
  analyzing the solution of the adjoint problem, we show that error
  indicators in the bulk may be defined to be of higher order than
  those close to the boundary, which lead to more economic meshes. 
  The theory is illustrated with some numerical examples.
\end{abstract}

% REQUIRED
\begin{keywords}
 a posteriori error estimate; normal flux; dual weighted residual method
\end{keywords}

\begin{AMS}
  65M50, 65M60
\end{AMS}

Let  {${\O} \subset \RR^{\dimension}$, % $d = 2,3$
	$\dimension = 2,3$,
	} be a polygonal/polyhedral domain, let $\G = \partial
{\O}$ denote its boundary and $\nu$ the outer unit normal.
We consider the following diffusion problem
\[
-\nabla \cdot  {a} \nabla u = f, \mbox{ in } \Omega,
\]
with non homogeneous Dirichlet boundary conditions, $u=g$ on $\Gamma$.  The
outer normal flux $\nu \cdot ({a} \nabla u)$ is an important quantity in many
applications. It is of importance for instance when a heat flux or an electric field on the
boundary of the domain needs to be approximated, or in fluid mechanics
for the fluid forces \cite{akira1986,gresho1987,nerg2001,dennis2001}. 
For boundary control problems, an accurate approximation of the
normal flux on the boundary also plays a critical role \cite{apel2015,apel2016}.
Recently there has been a number of works estimating the
error for the outer normal flux in the a priori sense. We refer
to \cite{HWM13,LM14}.

From the computational perspective it is appealing to apply adaptive
methods that concentrate degrees of freedom where they are most
needed to achieve a certain accuracy. In particular, for the normal
flux on the boundary, we expect perturbations in the bulk of the
domain to be less significant than those close to the boundary. 
This is proved in \cite{silvia} where local a priori error estimates were given for the
error in the outer normal flux. In particular, the error on the flux
quantity was shown to depend on the $H^1$-error in a tubular
neighborhood of the boundary and a global term that measures the
global error in a weak norm. Similar results using boundary
concentrated meshes were obtained more
recently in \cite{PW19}, where the application to a Dirichlet boundary control problem
was studied. A consequence of the localization property
underlying the above a priori error estimates is that a standard energy norm
estimate is unlikely to have optimal performance when approximating
the normal flux, since it does not
account for the relative independence of the goal quantity on 
perturbations in the bulk. It is however not
  straightforward to ensure accuracy of the boundary flux using a
  priori refinement in the boundary region alone, since geometric
  singularities or rough data nevertheless have to be taken into account.

The objective of the present work is to derive a residual based a posteriori error
estimate for the outer normal flux that exploits the
localization property. In particular, we add some mesh dependent weight in front of the classical residual based error estimator, and the weights greatly depend on the distance to the boundary.
More precisely, the domain is {implicitly} divided into two zones, a tubular neighborhood around the
boundary and an interior, bulk zone. For elements in the latter, the residual estimator is
multiplied with the mesh diameter to a higher power than in the
boundary region, hence giving it relative smaller weight. { To get a
precise quantification of the size of the weight 
we
consider an adjoint problem. Thanks to suitable weighted estimates we
 determine the rate of the
decrease of the adjoint solution and its derivatives 
with increasing
distance to the boundary.} This then helps provide bounds on the dual
weights in the a posteriori error estimate that allow us to decompose
the domain in a bulk and a boundary subdomain with associated error
indicators. 

The use of adjoint equations for the derivation of a
posteriori error estimates in weak norms was first proposed by
Eriksson and Johnson in \cite{EJ91}, in the case of $L^2$-norm bounds.
These ideas were generalized to the approximation of
fluxes and fluid forces using the Dual Weighted Residual a posteriori error estimation
approach (see for instance
\cite{BR96,becker1996, Giles1997AdaptiveEC, BKR00, BR01,richter2015}). In these approaches,
the dual solution was approximated, typically focussing on linear
functionals of the error. There has recently been an increased
interest in the convergence and optimality of goal
oriented adaptive methods \cite{BET11,HPZ15, FPZ16, HP16, BIP20, IP21, becker2021goal}. 
 With this work we show that when the target quantity of the
 computation is the outward normal flux, a detailed analysis of the adjoint equation can lead to a
posteriori bounds that perform better than the standard energy
estimate, but without the need of solving the dual problem,
numerically. Recall in this context that, when the target quantity driving the adaptive procedure is
 a norm of the error, the computation of the solution of the adjoint problem is complicated by the fact that the right hand side  depends on the 
error itself, and is therefore not directly available, contrary to what happens for instance when the target quantity is a given known functional of the solution, 
such as the value at a point or the integral over a line.

 Herein we only consider
the standard finite element setting where the domain is meshed with a
conforming triangulation. However, the arguments generalize in a
straightforward manner to a posteriori error estimates for fictitious domain
methods where elements are cut
\cite{BHL20}. To extend the method to adaptive standard fictitious domain
methods \cite{GG95} in the spirit of \cite{BBSV19}, or domain
decomposition methods, some more subtle arguments are needed. Indeed in
such situations, the boundary divides the computational domain
in two (or more) subdomains, thus requiring an analysis of the adjoint
solution,
similar to the one in this paper, for each subdomain and accounting for all
boundaries and interfaces of the problem. This is the topic of a
forthcoming paper.

An outline of the paper is as follows. First we introduce the weak
formulation of our model problem and the associated finite element
method
in section \ref{sec:local}. In section \ref{sec:apost} we derive the a
posteriori error estimate. Then we show in section \ref{sec:stab} how to
apply the results to some known stabilized methods, such as the
Barbosa-Hughes methods and Nitsche's method. Finally, we illustrate the
theory with some numerical examples in section \ref{sec:numerics}.

\section{The Lagrange multiplier formulation of the Dirichlet Problem}
\label{sec:local}
For $g \in H^{1/2}(\G)$ and $f \in L^2(\Omega)$ given, we consider the
problem of finding $u \in H^1({\O})$, ${\l} \in H^{-1/2}(\G)$ such that for all 
 $v \in H^1({\O})$, $\m \in H^{-1/2}(\G)$ 
 \begin{gather}
   \label{pbcont}
\int_{\O} a\, \grad u\cdot \grad v -  \int_\G {\l} v  =  \int_{\O} f v, \qquad\qquad
\int_\G u \m  = \int_\G g \m.
 \end{gather}
  { where $a\in C^\infty(\bar \Omega)$ is the diffusion coefficient, which for the sake of simplicity we assume to be scalar, satisfying
  $0 < \alpha \le a \le M$ for some constants $\alpha$ and $M$}.
We consider a Galerkin discretization of such problem. More precisely, letting
 $V_h
\subset H^1({\O})$, ${\L}_h \subset H^{-1/2}(\G)$ be finite element spaces defined on a shape regular triangulation \(\Th\). We look for  $u_h \in V_h$, ${\l}_h \in {\L}_h$ such that for all 
 $v_h \in V_h$, $\m_h \in {\L}_h$ 
 \begin{equation}
   \label{pbdisc}
\int_{\O} a \grad u_h\cdot \grad v_h -  \int_\G {\l}_h v_h  =  \int_{\O} f v_h, \qquad\qquad
\int_\G u_h \m_h = \int_\G g \m_h.
 \end{equation}

We assume that $V_h$ contains the space of continuous piecewise polynomials of order $k$ $(k \ge 0)$ on $\Th$, which we denote by $\widecheck V_h$, and that $\Lambda_h$ contains a subspace $\widecheck \L_h$ which is either the space of piecewise constants, or the space of continuous piecewise linears on the mesh induced on $\Gamma$ by $\Th$.

Restricting the test functions in \cref{pbcont} to the discrete
spaces and taking the difference of \cref{pbcont} and \cref{pbdisc} we see that the
following Galerkin orthogonality holds: for all 
 $v_h \in V_h$, $\m_h \in {\L}_h$ 
\begin{equation}\label{eq:gal_ortho}
\int_{\O} a\, \grad (u- u_h)\cdot \grad v_h -  \int_\G (\l-{\l}_h) v_h  = 0, \qquad\qquad
\int_\G (u - u_h) \m_h = 0.
\end{equation}

Observe that in the above we  are as general as possible in the definition of
the two spaces. We do not even need to assume that the spaces satisfy
the inf-sup condition required for the stability of \cref{pbdisc}. This of course does not mean that the method is
stable without it, only that the a posteriori error estimate will
measure the computational error independently of the stability
properties of the pair $V_h \times \Lambda_h$. An example of spaces that may be used in the
framework are
\begin{equation}\label{defVh}
V_h = \{
u \in H^1(\Omega): \ u|_T \in \mathbb{P}_k(T), \ \forall T \in \Th
\},
\end{equation}
and, for $k' \geq 0$
\begin{equation}\label{defLh1}
\Lambda_h = \{
\lambda \in L^2(\Gamma): \ u|_F \in \mathbb{P}_{k'}(F), \ \forall F \in \Th|_\Gamma \},
\end{equation}
or, for $k' \geq 1$ 
\begin{equation}\label{defLh2}
\Lambda_h = \{
\lambda \in C^0(\Gamma): \ u|_{F} \in \mathbb{P}_{k'}(F), \ \forall F \in \Th|_\Gamma \}.
\end{equation}
Also variants of the spaces \cref{defLh1} and \cref{defLh2} with local conforming enrichment on the boundary
to satisfy the inf-sup condition are valid \cite{brezzi1997}. 

{ \begin{remark}\label{rem1.1}  We point out that, for $k'= k$, the choice \cref{defLh2} for the multiplier space, coupled with the choice \cref{defVh}  for the approximation of the primal unknown (i.e., choosing $\Lambda_h = V_h|_{\Gamma}$)  yields a stable discretization of Problem \cref{pbcont}, equivalent to strongly imposing the Dirichlet boundary condition $u_h = \ProjLtwo g$, where $\ProjLtwo: L^2(\Gamma) \to V_h|_\Gamma$ is the $L^2(\Gamma)$ orthogonal projection. 
	Then, using $\lambda_h$ as an approximation to the normal flux is equivalent to compute the latter by post-processing with a variational approach as proposed, for instance, in \cite{PW19}.
Remark that, when the domain has corners, this method will not have, in general, optimal approximation for the multiplier, and it should be modified following the strategy used in the mortar method (see \cite{bernardi1993domain}), where discontinuity is allowed at the corners, with  $k'=k-1$  for those elements on the boundary mesh $\Th|_\Gamma$ which are adjacent to the corners, and  $k'=k$ for the remaining elements.  Observe,  however, that also for the suboptimal choice \cref{defLh2}, the estimator we are going to present, remains valid.
\end{remark}}

\section{A posteriori error estimates}
\label{sec:apost}
The a posteriori error estimate is derived in three steps.
We first derive an error representation using the adjoint problem. We then derive the local bounds for the adjoint solution and, finally, we obtain the weighted residual estimates. In what follows we will use the notation $A \lesssim B$ to indicate that $A \leq c B$ for some positive constant $c$ independent of mesh size parameters such as element diameters and/or face diameters or edge lengths. $A \simeq B$ will stand for $A \lesssim B \lesssim A$. 

\subsection{Error representation using duality}
\label{sec:Apost}
We let $$\A: (H^1(\O) \times H^{-1/2}(\G)) \times  (H^1(\O) \times H^{-1/2}(\G))  \to \mathbb{R}$$ be defined by
{
\begin{equation}\label{adjoint}
\A ( w,\e ; v,\z) = \int_{\O}\a \nabla w \cdot \nabla v -  \int_\G \e v + \int_\G w \z .
\end{equation}}
Let $(u,\lambda)\in H^1(\Omega)\times H^{-1/2}(\Gamma)$ be the solution of \cref{pbcont} and let $(u_h,\lambda_h)\in
V_h \times \Lambda_h$ satisfy \cref{pbdisc}. 
Set $e = u - u_h$ and $\d = \l - \l_h$. We define $L:H^{-1/2}(\Gamma) \to \mathbb{R}$ as
\[
L(\xi) := \| \d \|^{-1}_{-1/2,\G} ( \d , \xi )_{-1/2,\G}, \qquad \text{so that } \qquad  L(\d) = \| \d \|_{-1/2,\Gamma} ,
\]
where $(\cdot,\cdot)_{-1/2,\Gamma}$ is the scalar product for the space $H^{-1/2}(\Gamma)$, whose precise expression is provided  later in \cref{negative-half-norm-representation}, and where $\| \cdot \|_{-1/2,\Gamma}$ is the corresponding norm.
Define $(z,\z) \in \V = H^1(\O) \times H^{-1/2}(\G)$ as the solution of 
\begin{equation}\label{eq:weak_adjoint}
\A ( w,\e ; z,\z) = L(\e), \qquad \forall\ (w,\e) \in \V. 
\end{equation}
	Remark that the right hand side functional $L$ depends on the unknown error $\delta$, so that it is not possible to compute $z, \zeta$, even only approximately.
It is, however, easy to see that $|L(\xi)| \leq \| \xi \|_{-1/2,\Gamma}$, and then the operator $L$ has unitary norm. Therefore, by the stability of \cref{eq:weak_adjoint}, we have
\begin{equation}\label{dual-estimate}
\| z \|_{1,\Omega} \lesssim 1, \qquad \| \z \|_{-1/2,\G} \lesssim
1.
\end{equation}

Let $\iEdges$ and $\bEdges$ respectively denote the set of interior and boundary   $(d-1)$-dimensional facets of the triangulation $\Th$
and,  for an
element $T \in \Th$, let $\nu_T$ denote the outer unit normal to $\partial T$. On a $(d-1)$-dimensional facet  $F = \partial T^+ \cap \partial T^-$
we define the jump of the normal 
flux by $\jump{\a \partial_\nu u_h} =\a \nabla u_h^+ \cdot \nu_{T^+} + \a \nabla u_h^- \cdot \nu_{T^-}$. 

\begin{proposition}\label{prop:error_rep}(Error representation)
Let $\d = \l - \l_h$ and let $z, \z$ be the solution of \cref{eq:weak_adjoint}.
Then it holds that for any $z_h \in V_h$ and $\z_h \in \Lambda_h$
\begin{equation}\label{error-representation}
\begin{split}
\| \d \|_{-1/2,\G}  =& \sum_{T \in \Th}\int_T (f + \divagrad u_h) (z - z_h) - \sum_{F \in \iEdges} \int_{F}
\jump{\a \partial_\nu u_h} (z-z_h)\\ 
&+ \sum_{F\in\bEdges}\int_{F}(\l_h - \a \partial_\nu u_h)(z-z_h) +  \int_\Gamma (g - u_h)(\z -\z_h).
\end{split}
\end{equation}

\end{proposition}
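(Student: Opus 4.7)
The plan is to start from the adjoint equation \cref{eq:weak_adjoint} and test it with the error pair $(w,\eta)=(e,\d)\in\V$. This immediately yields
\[
\| \d \|_{-1/2,\G} \;=\; L(\d) \;=\; \A(e,\d;\,z,\z) \;=\; \int_{\O} a\,\grad e \cdot \grad z \;-\; \int_\G \d\, z \;+\; \int_\G e\, \z.
\]
The right-hand side is exact but not computable; the next step is to replace $(z,\zeta)$ by $(z-z_h,\zeta-\zeta_h)$ for arbitrary $z_h\in V_h$, $\zeta_h\in\Lambda_h$. This is legitimate because of the Galerkin orthogonality \cref{eq:gal_ortho}: the first relation in \cref{eq:gal_ortho} kills $\int_\O a\,\grad e\cdot\grad z_h-\int_\G\d\, z_h$, and the second kills $\int_\G e\,\z_h$. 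So the identity becomes
\[
\| \d \|_{-1/2,\G} \;=\; \int_{\O} a\,\grad e \cdot \grad (z-z_h) \;-\; \int_\G \d\, (z-z_h) \;+\; \int_\G e\, (\z-\z_h).
\]

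Next I would use the continuous problem \cref{pbcont} tested against $(z-z_h,0)$ to rewrite $\int_\O a\,\grad u\cdot\grad(z-z_h)-\int_\G\l(z-z_h)=\int_\O f(z-z_h)$, so that
\[
\int_{\O} a\,\grad e \cdot \grad (z-z_h) - \int_\G \d\, (z-z_h) \;=\; \int_\O f(z-z_h) \;-\; \int_\O a\,\grad u_h\cdot\grad(z-z_h) \;+\; \int_\G \l_h (z-z_h).
\]
Then I would perform elementwise integration by parts on the $a\,\grad u_h$ term,
\[
-\int_\O a\,\grad u_h\cdot\grad(z-z_h) \;=\; \sum_{T\in\Th}\int_T (\grad\!\cdot\! a\grad u_h)(z-z_h) \;-\; \sum_{T\in\Th}\int_{\partial T} a\,\grad u_h\cdot\nu_T\, (z-z_h),
\]
and collect the facet contributions: interior facets combine into $\sum_{F\in\iEdges}\int_F \jump{a\partial_\nu u_h}(z-z_h)$, while each boundary facet contributes a single term $\int_F a\,\partial_\nu u_h\,(z-z_h)$ which combines with the $\int_\G \l_h(z-z_h)$ term already present to produce $\int_F (\l_h - a\,\partial_\nu u_h)(z-z_h)$.

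For the last piece, $\int_\G e\,(\z-\z_h)$, I would use that $u=g$ on $\G$ (hence $e=g-u_h$ on $\G$) to rewrite it as $\int_\G (g-u_h)(\z-\z_h)$. Putting the three contributions together gives exactly \cref{error-representation}. The only step that requires care is the elementwise integration by parts and the sign bookkeeping on interior versus boundary facets; everything else is a clean substitution of the continuous equation and an application of Galerkin orthogonality.
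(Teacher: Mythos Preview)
Your proof is correct and follows essentially the same route as the paper: duality plus Galerkin orthogonality, then elementwise integration by parts. The only cosmetic difference is that you substitute the continuous weak equation \cref{pbcont} \emph{before} integrating by parts (so you integrate by parts only on the discrete term $a\,\grad u_h$), whereas the paper integrates by parts on the full error $a\,\grad e$ and then invokes the strong form $-\divagrad u = f$ on each element; the bookkeeping and the final identity are identical.
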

\begin{proof}
Taking $w = e$ and $\e = \d$ in \cref{eq:weak_adjoint} we have
\[
\| \d \|_{-1/2,\G} = L(\d) = \A(e, \d; z, \z).
\]
Now, for $z_h \in V_h$, $\z_h \in \L_h$ arbitrary, thanks to Galerkin
orthogonality \cref{eq:gal_ortho} we can write:
\[
\| \d \|_{-1/2,\G}=
\A ( e , \d ; z - z_h, \z - \z_h) = I + II
\]
with
\[
I = \int_\O a \nabla e \cdot\nabla (z - z_h) , \;
II = - \int_\Gamma (a \partial_\nu u - \l_h) (z-z_h) +
 \int_\Gamma (g - u_h) (\z - \z_h) .
\]
For the term $I$ we obtain using Green's theorem
\begin{equation*}
\begin{split}
I =& \int_{\O}a \nabla e \cdot \nabla (z - z_h) 
= \sum_{T \in \Th}\int_T aa \nabla e  \cdot \nabla (z- z_h) \\ 
=& \sum_{T \in \Th} \left(  \int_T (f + \divagrad u_h) (z - z_h) + \int_{\partial
  T} a \nabla (u -  u_h)\cdot \nu_T (z - z_h) \right) \\
= &\sum_{T \in \Th} \int_T (f + \divagrad u_h) (z - z_h) -  \sum_{F \in \iEdges} \int_{F}
\jump{a \partial_\nu u_h} (z-z_h)  \\
&+ \sum_{F\in\bEdges}\int_{F} (a \partial_\nu u - a \partial_\nu u_h)(z-z_h) .
\end{split}
\end{equation*}

Combining all yields \cref{error-representation}. This completes the proof of the proposition. \end{proof}

\newcommand{\HL}[1]{#1^{\mathcal{H}}}
\newcommand{\Riesz}{\mathfrak{R}}
\newcommand{\Humo}{H^{1/2}_{\circ}(\Gamma)}
\newcommand{\dist}[1]{d_\Gamma(#1)}
\newcommand{\weightx}{\omega_x}
\newcommand{\Phix}{\Phi_x}

\subsubsection{Some observations on the operator $L$}
We start by observing that taking $v_h=1$ in \cref{eq:gal_ortho}
implies $\int_\Gamma \d  = 0$. Then we have
\[
\| \d \|_{-1/2,\Gamma} = \sup_{\phi\in H^{1/2}(\Gamma)} \frac{ \int_{\Gamma}\d \phi  }
{\| \phi \|_{1/2,\Gamma}}
\simeq \sup_{{\phi\in H^{1/2}(\Gamma)}\atop{\int_\Gamma\phi  = 0}} \frac{\int_{\Gamma}\d \phi  }{| \phi |_{1/2,\Gamma}}.
\]
On the space $\Humo = \{ \phi \in H^{1/2}: \ \int_\Gamma \phi = 0\}$ of zero average functions in $H^{1/2}(\Gamma)$, we can  define a scalar product and a norm, equivalent to the standard $H^{1/2}$ scalar product and norm,  as 
\[(\phi,\psi)_{1/2,\Gamma} = \int_{\Omega} \nabla \HL{\phi}\cdot \nabla \HL{\psi} , \qquad
| \phi |_{1/2,\Gamma}:= | \HL{\phi} |_{1,\Omega}, 
\]  
where $\HL{\phi} \in H^1(\Omega)$ denotes the harmonic lifting of $\phi$. We then let $\| \cdot \|_{-1/2,\Gamma}$ be defined by duality with respect to the above norm. We now let $\Riesz:  (\Humo)' \to \Humo$ denote the Riesz isomorphism, which, we recall, is defined as the solution of 
\[
(
\Riesz\lambda , \phi )_{1/2,\Gamma} = \int_\Gamma \lambda  \phi 
 \quad \forall \,\phi \in \Humo.
\]
We recall that, as $\Riesz$ is an isomorphism, we also have that
\begin{equation}\label{negative-half-norm-representation}
{
(\lambda,\mu)_{-1/2,\Gamma} = (\Riesz\lambda,\Riesz\mu)_{1/2,\Gamma}} = 
\int_{\Omega} \nabla \HL{(\Riesz \lambda)} \cdot \nabla\HL{(\Riesz \mu)} .
\end{equation}
It is now easy to check that, if $\mu \in L^ 2(\Gamma)$ satisfies $\int_\Gamma \mu = 0$, then $\HL{(\Riesz \mu)}$ is the unique solution \textcolor{red}{of}
\begin{equation}\label{Riesz_lam}
- \Delta \HL{(\Riesz \mu)} = 0 \text{ in }\Omega,\qquad \int_\Gamma \HL{(\Riesz \mu)} =0, \qquad \partial \HL{(\Riesz\mu)}/\partial\nu = \mu.
\end{equation}

Indeed for any function $v \in H^1(\Omega)$, there is a unique decomposition $v = \bar v + v_1+ v_0$ such that
$\bar v = |\Gamma|^{-1} \int_{\Gamma} v$, $v_0 \in \Humo$ is the harmonic extension of $v - \bar v$, and $v_1 \in H_0^1(\O)$ satisfies  $\triangle v_1 = \triangle v$.
Then we have that for any $v \in H^1(\Omega)$
\begin{equation}
	\begin{split}
		&\int_{\Omega} \nabla \HL{(\Riesz \mu)} \cdot \nabla v =  
		\int_{\Omega} \nabla \HL{(\Riesz \mu)} \cdot \nabla v_0  =
		(\Riesz \mu, v_0)_{1/2} = \int_{\Gamma }\mu v_0 = \int_{\Gamma}\mu v,	\end{split}
\end{equation}
which is the  weak form of equation \cref{Riesz_lam}.

\subsection{Local estimates for the adjoint solution $z$}
We observe that $z$ is the solution of the following problem.
\[
\int_\Omega a \nabla w \cdot \nabla z + \int_\Gamma w \zeta = 0, \qquad - \int_\Gamma \eta z = \| \delta \|_{-1/2,\Gamma}^{-1} (\delta,\eta)_{-1/2,\Gamma} = 
| \Riesz \delta |^{-1}_{1/2,\Gamma} \int_{\Gamma}\eta \,  \Riesz \delta.
\]
This rewrites as
\[
- \divagrad z = 0 \text{ in }\Omega, \qquad z = -  |\Riesz\delta
|^{-1}_{1/2,\Gamma}{\Riesz \delta}{} \text{ on }\Gamma.
\]

{ The following Lemma, whose proof we include for the sake of completeness, was proven in \cite{KhoromskijMelenk04}.
	
	\begin{lemma}\label{lem:z-bound}
	  Let $\dist{x}$ denote the distance of $x$ from $\Gamma$ and {let $w \in H^1(\Omega)$ satisfy  $\divagrad w =0$ in 
	  $\Omega$.}
	  Then, for all $p \geq 0$ it holds that
		\begin{equation}\label{lem:dual_bounds}
		\| d_\Gamma^{p+1} \nabla^{p+2} w \|_{0,\Omega} \lesssim | w |_{1,\Omega}.
		\end{equation}		
		\end{lemma}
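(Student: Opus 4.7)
The plan is to combine an iterated Caccioppoli-type interior regularity estimate with a Whitney decomposition of $\Omega$ by balls whose radius is comparable to the distance to $\Gamma$. The idea is that $w$, being a weak solution of a homogeneous elliptic equation with $C^\infty$ coefficient, is smooth in the interior, and the only thing that can degenerate as one approaches $\Gamma$ is the size of the derivatives, at a rate controlled by $d_\Gamma^{-(p+1)}$.

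The first step is a scaled local estimate: for every $x_0 \in \Omega$ and every $r > 0$ with $B(x_0, 2r) \subset \Omega$,
\begin{equation*}
\| \nabla^{p+2} w \|_{L^2(B(x_0,r))} \lesssim r^{-(p+1)} \| \nabla w \|_{L^2(B(x_0,2r))},
\end{equation*}
with a constant independent of $x_0$ and $r$. This is obtained by rescaling $\hat w(y) = w(x_0 + r y)$, so that $\hat w$ solves $-\nabla \cdot (\hat a \nabla \hat w) = 0$ on $B(0,2)$ with $\hat a(y) = a(x_0 + r y)$. Because $a \in C^\infty(\bar\Omega)$, all derivatives of $\hat a$ up to any fixed order remain uniformly bounded in $r$ and $x_0$ (derivatives of $\hat a$ carry positive powers of $r$). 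Then the standard Caccioppoli/difference-quotient iteration, applied $p+1$ times on a chain of shrinking balls between radii $2$ and $1$, yields $\| \nabla^{p+2} \hat w \|_{L^2(B(0,1))} \lesssim \| \nabla \hat w \|_{L^2(B(0,2))}$ with a constant depending only on $p$ and on the $C^{p+1}$-bounds of $\hat a$; scaling back gives the claim.

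The second step is a Whitney-type covering. Choose a countable family $\{x_i\}_{i \in I} \subset \Omega$ such that the balls $B_i = B(x_i, d_\Gamma(x_i)/4)$ cover $\Omega$ and the enlarged balls $\widetilde B_i = B(x_i, d_\Gamma(x_i)/2)$ have uniformly bounded overlap (and in particular $\widetilde B_i \subset \Omega$). On $B_i$ the distance $d_\Gamma$ is comparable to $d_\Gamma(x_i)$, so applying the local estimate of the first step with $r = d_\Gamma(x_i)/4$ yields
\begin{equation*}
\int_{B_i} d_\Gamma^{2(p+1)} |\nabla^{p+2} w|^2 \lesssim d_\Gamma(x_i)^{2(p+1)} \| \nabla^{p+2} w \|_{L^2(B_i)}^2 \lesssim \| \nabla w \|_{L^2(\widetilde B_i)}^2.
\end{equation*}
Summing over $i$ and invoking the finite-overlap property then gives $\| d_\Gamma^{p+1} \nabla^{p+2} w \|_{0,\Omega}^2 \lesssim |w|_{1,\Omega}^2$, as required.

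The main obstacle is the first step, namely obtaining the scaled interior estimate with a constant uniform in $r$. This forces one to keep track of how the coefficient $a$ and its derivatives behave under the rescaling $y \mapsto x_0 + r y$; the smoothness of $a$ on the \emph{closed} domain is what guarantees that all relevant $C^k$-norms of $\hat a$ remain bounded independently of $x_0 \in \Omega$ and $r \leq \mathrm{diam}(\Omega)$, and consequently that the Caccioppoli iteration can be performed with a constant depending only on $p$. The Whitney covering and the summation are then standard.
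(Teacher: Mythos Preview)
Your proposal is correct and follows essentially the same two-step strategy as the paper: a scaled interior regularity estimate on balls, combined with a covering of $\Omega$ by balls of radius comparable to $d_\Gamma$ with bounded overlap. The only cosmetic differences are that the paper obtains the local bound with an extra $R^{-p-2}\|w\|_{0,B_R}$ term (via cutoff and induction rather than Caccioppoli iteration) and removes it later by subtracting the local mean and applying Poincar\'e at the summation stage, whereas you absorb that step into the local estimate; and the paper invokes the Besicovitch covering theorem where you speak of a Whitney-type covering.
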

		
		\begin{proof} 
	We start by proving a local bound. Let $B_R$ and $B_{cR}$, $ 0<c<1$  
	be two concentric balls of radius respectively $R$ and $cR$, and assume that $w \in H^1(B_R)$ satisfies $\divagrad w = 0$ in $B_R$. Then, we claim that for all $p \geq 0$ it holds that
		\begin{equation}\label{elliptic-bound}
		\| \nabla^{p+2} w \|_{0,B_{cR}} \lesssim R^{-p-1}\| \nabla w \|_{0,B_R} + R^{-p-2}\| w \|_{0,B_R}, 
		\end{equation}
where the implicit constant in the inequality depends on $c$. 	
We start by proving\cref{elliptic-bound} for $R = 1$. We prove it by induction on $p$. For $p = 0$, this is a consequence of \cite[Theorem 8.8]{GilTru01}. Let us now assume that the result is true for all $p \leq n-1$ and  prove it for $p = n$. We let $c' = 1-(1-c)/2 = c/2+1/2$, and let $\omega_c \in C^\infty_0(B_{c'})$, $\omega_c \geq 0$, $\omega_c = 1$ in $B_c$. We have
	\[
	\divagrad(\omega_c w) =  2 a \grad w \cdot \grad \omega_c + a w \Delta \omega_c + w \grad a \cdot \grad \omega_c,\quad \text{ in }B_{c'} \qquad \omega_c w = 0,\quad\text{ on }\partial B_{c'}.
	\]
	Using standard results on the smoothness of the solution of elliptic equations (see \cite{GilTru01}), by the induction assumption
	we have that 
	\begin{equation*}
	\begin{split}
	\| \nabla^{n+2} w \|_{0,B_c} &\leq  \| \nabla^{n+2}( \omega_c w  ) \|_{0,B_{c'}} 
	\lesssim
	 \| 
	  2 a \grad w \cdot \grad \omega_c + a w \Delta \omega_c + w \grad a \cdot \grad \omega_c
	 \|_{n,B_{c'}}\\ &\lesssim  \| \nabla w \|_{n,B_{c'}} +  \| w \|_{n,B_{c'}}\lesssim 
	\| w \|_{1,B_1},
	\end{split}
	\end{equation*}
which proves our claim for $R=1$. By rescaling we immediately obtain \cref{elliptic-bound}.

Let us now prove \cref{lem:dual_bounds}. We consider a covering of $\Omega$, consisting of a countable collection of balls $B_i = B_{r_i}(x_i)\subset \Omega$, of center $x_i$ and radius $r_i$, with  $r_i = \tilde c \dist{x_i}$ for some fixed $0 <\tilde c <1$, 
such that
\begin{enumerate}
	\item there exist $N \in \mathbb{N}$ such that all $x \in \Omega$ belong to at most $N$ balls $B_i$; \label{covering1}
	\item for some $ 0 < c <1$ independent of $i$, letting $\tBi \subset\subset B_i$ denote the ball of center $x_i$ and radius $c r_i$, it holds that $\Omega \subseteq \cup_{i} \tBi$,\label{covering2}
\end{enumerate}
(by the Besicovitch covering Theorem, such a collection exists). 
We observe that the relation between the radius of the balls in our covering and the distance of the centers from the boundary of the domain implies that for all $i$, $x \in \tBi$ implies $\dist{x} \simeq r_i$.
Then, letting $w_i = |B_i|^{-1} \int_{B_i} w$ denote the average of $w$ in $B_i$, using \cref{elliptic-bound}  and a Poincar\'e inequality, we can write 
\begin{equation*}
\begin{split}
\| d_\Gamma^{p+1} \nabla^{p+2} w \|_{0,\Omega}^2 &\leq \sum_i \| d_\Gamma^{p+1} \nabla^{p+2} w \|_{0,\tBi}^2 
\lesssim \sum_i r_i^{2(p+1)}\|  \nabla^{p+2} w \|_{0,\tBi}^2 \\
 & \lesssim \sum_i r_i^{2(p+1)}\|  \nabla^{p+2} (w - w_i) \|_{0,\tBi}^2 \lesssim  \sum_i (|w - w_i |^2_{1,B_i} + r_i^{-2} \| w - w_i \|_{0,B_i}^2)\\
& \lesssim \sum_i |  w |_{1,B_i}^ 2 \lesssim | w |_{1,\Omega}^2,
\end{split}
\end{equation*}
which concludes the proof.		
\end{proof}
}

\subsection{The a posteriori error estimator}
Using the error representation of \cref{prop:error_rep} and
the local bounds for the adjoint solution stated in 
\cref{lem:dual_bounds}, we will now derive the a posteriori error
estimation.
Comparing to the classical residual based error indicator,
our local error indicators for each element/{facet} are additionally multiplied by local
dual weights depending on the distance from the element/{facet} to the boundary. Let us
first introduce some notations that will be useful for the bounds.

We let $h_T$ (resp. $h_{F}$) denote the diameter  of an element $T$ (resp. of a $(d-1)$-dimensional facet $F$) in $\Th$. For a given element $T \in \Th$, $\patchT$ denotes the patch of
elements that have at least a vertex in common with $T$.
 The distance of an element $T$ to the boundary
will be measured using $\r_T = \underset{x \in \Delta_T}{\min} \dist{x}$. That
is the shortest distance from the associated patch to the boundary.

We now let  $\ClemProj : H^1(\Omega) \to \widecheck V_h$ denote the Scott-Zhang projector, introduced in \cite{SZ90}. We recall that, for $1 \leq m \leq k+1$ it holds that
{\begin{gather}\label{boundclement}
	\| z - \ClemProj z \|_{0,T} + h_T | z - \ClemProj z |_{1,T} \lesssim h^{m}_T | z |_{m,\patchT}.
	\end{gather}
Using this bound for $m = 1$ and $m = k+1$ we have the following local interpolation bounds for the
adjoint solution.
\begin{lemma}\label{lem:clem}
Let $z_h = \ClemProj z$, then we have the following two bounds 
\begin{gather*}
\| z - z_h \|_{0,T} + h_T | z - z_h |_{1,T} \leq  \cunoT h_T | z |_{1,\patchT}, \\
{ \| z - z_h \|_{0,T} + h_T | z - z_h |_{1,T} \leq  C_2  h_T^{k+1} \r_T^{-k} \| d_\Gamma^{k}\, \nabla^{k+1} z \|_{0,\patchT}}
\end{gather*}
The constants $\cunoT$ and $C_2$ depend on the shape regularity of the mesh 
\end{lemma}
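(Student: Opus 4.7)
The plan is to obtain both estimates from a single tool, the Scott-Zhang approximation bound \cref{boundclement}, applied at two different smoothness levels, and then to relate the unweighted $H^{k+1}$ seminorm on the patch to the weighted version that appears in the statement.

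For the first inequality, I would simply apply \cref{boundclement} with $m=1$ to $z$, noting that $z \in H^1(\Omega)$ by \cref{dual-estimate}, which directly yields the desired bound with some constant $\cunoT$ depending only on the shape regularity of $\Th$.

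For the second inequality, I would first apply \cref{boundclement} with $m=k+1$ to get
\[
\| z - z_h \|_{0,T} + h_T | z - z_h |_{1,T} \lesssim h_T^{k+1} | z |_{k+1,\patchT} = h_T^{k+1} \| \nabla^{k+1} z \|_{0,\patchT}.
\]
The remaining step is to introduce the distance weight. By the very definition $\r_T = \min_{x \in \patchT} \dist{x}$, we have $\dist{x} \ge \r_T$ for all $x \in \patchT$, hence $1 \le \r_T^{-k}\, \dist{x}^{k}$ pointwise on $\patchT$. Inserting this factor under the integral gives
\[
\| \nabla^{k+1} z \|_{0,\patchT}^2 \le \r_T^{-2k} \int_{\patchT} d_\Gamma^{2k} |\nabla^{k+1} z|^2 = \r_T^{-2k} \| d_\Gamma^{k}\, \nabla^{k+1} z \|_{0,\patchT}^2,
\]
and combining with the previous display produces the second bound. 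The Besicovitch-type smoothness of $z$ that makes the $m=k+1$ version of \cref{boundclement} applicable is guaranteed by \cref{lem:z-bound} once $\r_T > 0$ (which is the only regime in which the second bound is informative), since then $\patchT$ is compactly contained in $\Omega$ and $z$ is smooth there.

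I do not anticipate any real obstacle: the only subtlety is bookkeeping the fact that the weight $d_\Gamma^k$ is minimized, not maximized, on the patch, so the inequality goes in the right direction precisely because we are bounding an unweighted seminorm by a weighted one through a constant $\r_T^{-k} \ge 1$; the shape-regularity-dependent constants $\cunoT, \cdueT$ are absorbed into the implicit constants of \cref{boundclement}.
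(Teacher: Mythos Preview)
Your proposal is correct and matches the paper's proof essentially line for line: apply \cref{boundclement} at $m=1$ and $m=k+1$, then insert the weight via the pointwise bound $d_\Gamma \ge \rho_T$ on $\patchT$, treating the boundary-adjacent case $\rho_T=0$ as trivially satisfied. The only inaccuracy is the throwaway closing remark that $\rho_T^{-k}\ge 1$, which need not hold when $\rho_T>1$ but is also not used---what you actually need, and correctly state earlier, is $d_\Gamma^{k}\rho_T^{-k}\ge 1$ on $\patchT$.
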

{ \begin{proof}
The first inequality is immediate by \cref{boundclement} with
$m=1$. The second inequality trivially holds for $T$ with $\Delta_T$ adjacent to the boundary (for which $\rho_T^{-k} = \infty$). For the elements for which $\Delta_T$ is interior to $\Omega$, it follows by first applying \cref{boundclement}
with $m=k+1$, then multiplying and dividing by $d_\Gamma^k$, and finally bounding $d_\Gamma^{-k} \leq \rho_T^{-k}$:
% which, for the elements for which $\Delta_T$ is interior to $\Omega$, it follows by first applying \cref{boundclement}
%with $m=k+1$, then multiplying and dividing by $d_\Gamma^k$, and finally bounding $d_\Gamma^{-k} \leq \rho_T^{-k}$:
\[
\| z - z_h \|_{0,T} + h_T | z - z_h |_{1,T} \lesssim h^{k+1}_T | z |_{k+1,\patchT}
\lesssim h^{k+1}_T \rho_T^{-k} |d_\Gamma^{k} z |_{k+1,\patchT}.
\]
\end{proof}}

\newcommand{\resdue}{\mathbf{r}_0}
\newcommand{\restre}{\mathbf{r}_{1}}
\newcommand{\resquattro}{\mathbf{r}_{2}}

{Let us at first assume that we have $g \in H^1(\Gamma)$. Under such an assumption we have the following theorem.}
\begin{theorem}\label{thm:apost} 
	Define the  following local residuals:
	\begin{equation}\label{defresquattro}
	\begin{split}
		&\resuno(T) =  h_T\| f + \divagrad u_h \|_{0,T}, \quad \forall T \in \mathcal{T}_h,\\
		&\resdue(F) =  h_{F}^{1/2} \| \jump{a \partial_\nu u_h} \|_{0,F}, \quad \forall F \in \iEdges,\\
		&\restre(F) = h_{F}^{1/2}   \| \l_h - a \partial_\nu u_h \|_{0,F},\quad \forall F \in \bEdges, \\
		&\resquattro(F) = h_{F}^{1/2} | g - u_h |_{1,F},\quad \forall F \in \bEdges.
	\end{split}
	\end{equation}
	Then we have
\begin{equation}\label{reliability}
\| \lambda - \lambda_h \|_{-1/2,\G} \lesssim \sqrt{  \sum_{T\in \Th}
  \varsigma_T^2 |
  \resuno(T) |^2 + \sum_{F \in \iEdges}  \varsigma_{F}^2 | \resdue(F) |^2  + 
\sum_{F \in \bEdges} \left(
| \restre(F) |^2 + | \resquattro(F) |^2
\right)},
\end{equation}
where the element and facet weights $\varsigma_T$ and $\varsigma_{F}$ are defined by
\begin{equation}\label{eq:def_weight}
\varsigma_T=\min\{
\cunoT ,\cdueT  h^{k}_T { \r_T^{{-k}}} \},\quad 
{\varsigma_{F}=\min\{\varsigma_T,\varsigma_{T'}\}}, \mbox{ with } F = T \cap T'.
\end{equation}
\end{theorem}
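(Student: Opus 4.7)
The plan is to insert $z_h = \ClemProj z$ into the error representation \cref{error-representation}, bound each of the four resulting terms by Cauchy--Schwarz combined with the two interpolation estimates of \cref{lem:clem}, and then sum globally using the dual stability \cref{dual-estimate} together with \cref{lem:z-bound} applied with $p=k-1$, which yields $\|d_\Gamma^{k}\nabla^{k+1}z\|_{0,\Omega}\lesssim |z|_{1,\Omega}\lesssim 1$. The weights $\varsigma_T$ of \cref{eq:def_weight} will arise organically from taking, element by element, the smaller of the two bounds of \cref{lem:clem}.

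For the element residual, Cauchy--Schwarz on each $T\in\Th$ produces the factor $\resuno(T)\cdot h_T^{-1}\|z-z_h\|_{0,T}$. By \cref{lem:clem} this last factor is bounded both by $\cunoT |z|_{1,\patchT}$ and by $\cdueT\, h_T^{k}\rho_T^{-k}\|d_\Gamma^{k}\nabla^{k+1}z\|_{0,\patchT}$; retaining whichever attains the minimum in \cref{eq:def_weight} reveals the weight $\varsigma_T$ multiplied by the corresponding local adjoint norm. Cauchy--Schwarz across elements, together with the finite overlap of the patches $\patchT$, \cref{lem:z-bound}, and \cref{dual-estimate}, produces the $\resuno$-contribution of \cref{reliability}. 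The interior facet sum is handled identically after invoking a standard scaled trace inequality to estimate $\|z - z_h\|_{0,F}$ in terms of $h_F^{-1/2}\|z-z_h\|_{0,T}$ and $h_F^{1/2}|z-z_h|_{1,T}$; this yields the $\resdue(F)$ contribution with $\varsigma_F=\min\{\varsigma_T,\varsigma_{T'}\}$. For $F\in\bEdges$ the adjacent patch touches $\Gamma$, so $\rho_T=0$ and only the first bound of \cref{lem:clem} survives, producing the unweighted $\restre(F)$ term.

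The delicate step, and the principal obstacle of the proof, is the Dirichlet contribution $\int_{\Gamma}(g-u_h)(\z-\z_h)$, whose non-locality prevents a direct element-wise treatment. My plan is to first use Galerkin orthogonality \cref{eq:gal_ortho} to eliminate $\z_h$ (any $\z_h\in\L_h$ may be dropped, reducing the integrand to $(g-u_h)\z$), and then exploit the variational identity $\int_\Omega a\nabla w\cdot\nabla z=-\int_\Gamma w\z$, valid for all $w\in H^1(\Omega)$ from the definition of $z$, by choosing $w = E_h(g-u_h)$ to be a face-localised $H^1$ lifting supported only in the boundary-adjacent elements. This converts the boundary pairing into the bulk integral $-\int_\Omega a\nabla E_h(g-u_h)\cdot\nabla z$, controlled by $\|\nabla E_h(g-u_h)\|_{0,\Omega}\,|z|_{1,\Omega}$. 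The technical hurdle is to engineer $E_h$ so that $\|\nabla E_h(g-u_h)\|_{0,T}\lesssim h_F^{1/2}|g-u_h|_{1,F}$; this requires absorbing the unwanted $h_F^{-1/2}\|g-u_h\|_{0,F}$ term arising from the generic lifting bound by means of a Poincar\'e inequality on $F$, made legitimate by the mean-zero property $\int_F(g-u_h)=0$ coming from testing \cref{eq:gal_ortho} against the characteristic function of $F$ in the piecewise-constant case \cref{defLh1} (and via a local Scott--Zhang argument on $\Gamma$ in the continuous case \cref{defLh2}). Facet-wise Cauchy--Schwarz then produces the $\resquattro$-contribution, and summing the four estimates completes \cref{reliability}.
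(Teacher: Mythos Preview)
Your handling of the bulk residual, the interior facet jumps, and the boundary term $\restre$ coincides with the paper's proof: the paper also takes $z_h=\ClemProj z$, $\z_h=0$, splits $\Th=\Th^1\cup\Th^2$ according to which bound in \cref{lem:clem} is sharper (your ``retain the minimum'' is the same device), and sums via \cref{lem:z-bound} and \cref{dual-estimate}.

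The divergence is in the Dirichlet term $\int_\Gamma(g-u_h)\z$. The paper's route is shorter: it bounds this directly by $\|g-u_h\|_{1/2,\Gamma}\|\z\|_{-1/2,\Gamma}\lesssim\|g-u_h\|_{1/2,\Gamma}$ via \cref{dual-estimate}, and then invokes an existing localization result (Lemma~3 of \cite{bertoluzza2016}) stating that, whenever $g-u_h$ is $L^2(\Gamma)$--orthogonal to $\widecheck\L_h$ (piecewise constants \emph{or} continuous piecewise linears), one has $\|g-u_h\|_{1/2,\Gamma}^2\lesssim\sum_{F\in\bEdges}h_F|g-u_h|_{1,F}^2$. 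Both admissible structures of $\widecheck\L_h$ are covered at once.

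Your alternative via the adjoint identity $\int_\Gamma w\z=-\int_\Omega a\nabla w\cdot\nabla z$ with $w=E_h(g-u_h)$ is really the same bound in disguise, since $\|\cdot\|_{1/2,\Gamma}$ is equivalent to the minimal $H^1$ extension seminorm; what remains is precisely the localization lemma you then try to re-prove by hand. The piecewise-constant case is sound in spirit (face-wise mean zero does give the Poincar\'e reduction), but building a \emph{global} $H^1$ lifting, supported in a single boundary layer, with the element-wise bound $|E_h(g-u_h)|_{1,T}\lesssim h_F^{1/2}|g-u_h|_{1,F}$ is less immediate than you suggest: continuity at shared boundary vertices, corner elements with several boundary faces, and the fact that $g-u_h$ is not a finite-element function on $\Gamma$ all need attention. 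For the continuous $\mathbb P_1$ multiplier case your ``local Scott--Zhang argument'' is too vague: the face-wise mean of $g-u_h$ is no longer zero, and the orthogonality to continuous piecewise linears must be exploited in a genuinely different way. None of this is fatal, but you are re-deriving a lemma with its own subtleties; the paper simply cites it.
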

\begin{proof}
Let us start by splitting $\Th$ as the union of two disjoint sets

\[
\Th^1 = \Big\{ T \in \Th: { \cunoT | z |_{1,\patchT} \leq C_2  h_T^{k} \r_T^{-k}  \| d_\Gamma^{k} \nabla^{k+1} z \|_{0,\patchT}}\Big\}, \qquad \Th^2 =
\Th \setminus \Th^1.
\]
Setting $z_h = \ClemProj z$ and $\z_h=0$ in the  error representation of  \cref{prop:error_rep}, we have
\begin{equation*}
\begin{split}
\| \d \|_{-1/2,\G}  =& \sum_{T \in \Th}\int_T (f + \divagrad u_h) (z - \ClemProj z) - \sum_{F \in \iEdges} \int_{F}
\jump{a \partial_\nu u_h} (z-\ClemProj z)\\
&+ \sum_{F\in\bEdges}\int_{F}(\l_h - a \partial_\nu u_h)(z-\ClemProj z)
+  \int_\Gamma (g - u_h)\z.
\end{split}
\end{equation*}

Observe that  \cref{lem:clem} gives us two error estimates for $\| z - \ClemProj z
\|_{0,T}$, and, depending on whether $T \in \Th^1$ or $T \in \Th^2$,
we apply the best possible estimate. This yields
\begin{equation*}
\begin{split}
&\sum_{T\in\Th} \int_T (f + \divagrad u_h)(z - \ClemProj z) \\
\lesssim&
\sum_{T \in {\Th^1}} \| f + \divagrad u_h \|_{0,T} \cunoT h_T | z |_{1,\patchT} + 
\sum_{T \in {\Th^2}} \| f + \divagrad u_h \|_{0,T} C_2  {  h_T^{k} \r_T^{-k}  \| d_\Gamma^{k} \nabla^{k+1} z \|_{0,\patchT}}\\
= &\sum_{T \in \Th^1} \varsigma_T\resuno(T) | z |_{1,\patchT} + \sum_{T \in \Th^2} \varsigma_T\resuno(T)  
{  \| d_\Gamma^{k} \nabla^{k+1} z \|_{0,\patchT}} \\
\leq& \sqrt{ \sum_{T\in \Th} \varsigma_T^2| \resuno(T) |^2 } \sqrt{
	\sum_{T \in \Th^1} | z |_{1,\patchT}^2 +  { \sum_{T \in \Th^2}   \| d_\Gamma^{k} \nabla^{k+1} z \|_{0,\patchT}^2}  }.
\end{split}
\end{equation*}
Applying
 \cref{lem:z-bound} 
we have
\[
	\sum_{T \in \Th^1} | z |_{1,\patchT}^2 +   { \sum_{T \in \Th^2}   \| d_\Gamma^{k} \nabla^{k+1} z \|_{0,\patchT}^2 }\lesssim
 \| z \|_{1,\O}^2 + {    \| d_\Gamma^{k} \nabla^{k+1} z \|_{0,\Omega}^2\lesssim  \| z \|_{1,\Omega} } \lesssim 1,
	\]
	so that
	\begin{equation}\label{A}
	\sum_{T\in\Th} \int_T (f + \divagrad u_h)(z - \ClemProj z) \lesssim   \sqrt{ \sum_{T\in \Th} \varsigma_T^2 | \resuno(T) |^2 },
	\end{equation}
	where the constant in the inequality depends on $\Omega$ and $a$.

A similar argument can be applied for interior facets. Letting $F \in \iEdges$, $F \subset \partial T$, the  standard bound holds
\begin{equation}
\begin{split}
&\int_{F}  \jump{a \partial_\nu u_h} (z-\ClemProj z) 
\leq \| \jump{a \partial_\nu u_h} \|_{0,F} \| z - \ClemProj z \|_{0,F}\\  
\lesssim  &\|\jump{a \partial_\nu u_h}\|_{0,F} \left( h_T^{-1/2} \| z - \ClemProj z \|_{0,T} + h_T^{1/2} |z - \ClemProj z|_{1,T}
\right) 
\leq   \| \jump{a \partial_\nu u_h} \|_{0,F}  h_T^{1/2} \cunoT | z |_{1,\patchT},
\end{split}
\end{equation}
as well as the enhanced bound
\begin{equation}
\int_{F} \jump{a \partial_\nu u_h} (z-\ClemProj z)
\leq   \| \jump{a \partial_\nu u_h} \|_{0,F} 
\cdueT  h_T^{k+1/2}  { \r_T^{-k}  \| d_\Gamma^{k} \nabla^{k+1} z \|_{0,\patchT}}.
\end{equation}
As for the cell contribution to the a posteriori estimate, we can
retain, for each facet, the more favorable estimator depending on whether
the facet $F$ belongs to an element  in $\Th^1$ or in $\Th^2$.
By similar argument to the ones used for the element residual term, we have
\begin{equation}\label{B}
- \sum_{F \in \iEdges} \int_{F}
\jump{a \partial_\nu u_h} (z-\ClemProj z) \lesssim 
 \COmega \sqrt{
 \sum_{e\in \iEdges} \varsigma_{F}^2
 |\resdue(F)|^2 }.
\end{equation}
The boundary terms are treated in the standard way for any $F \in \bEdges$ and $F \subset \partial T$,
\begin{equation}
\begin{split}
 &\int_{F}(\l_h - a \partial_\nu u_h)(z-\ClemProj z) \leq \| \l_h -  a \partial_\nu u_h \|_{0,F} \| z - \ClemProj z \|_{0,F}  
 \lesssim  \| \l_h -  a \partial_\nu u_h \|_{0,F} h_{F}^ {1/2} | z |_{1,\patchT}.
\end{split}
\end{equation}
Therefore,
\begin{equation}\label{C}
\sum_{F\in \bEdges }\int_{F}(\l_h -a \partial_\nu u_h)(z-\ClemProj z) \lesssim \left(\sum_{F\in \bEdges}h_{F}   \| \l_h - a \partial_\nu u_h \|^2_{0,F} \right)^{1/2} \|z\|_{1,\Omega} \le \left(
\sum_{F\in \bEdges}| \restre(F) |^2
\right)^{1/2}.
\end{equation}
 By \cref{dual-estimate}, the last term can be bounded as 
\[
\int_\Gamma (g - u_h)\z  \leq \| g - u_h \|_{1/2,\Gamma} \| \z  \|_{-1/2,\Gamma} \lesssim \|g - u_h \|_{1/2,\Gamma}.
\]
Finally, since $ g - u_h $ is orthogonal to $\widecheck \L_h
\subseteq \Lambda_h$, we can use Lemma 3 of \cite{bertoluzza2016}  to bound
\begin{equation}\label{boundfurbo}
\|g - u_h \|^2_{1/2,\Gamma} \lesssim 
\sum_{F\in\bEdges} h_{F} | g - u_h |_{1,F}^2
 =
\sum_{F \in \bEdges} | \resquattro(F) |^2.
\end{equation}
Combining all gives \cref{reliability}. This completes the proof of the theorem.
\end{proof}

{ If $g \in H^1(\Gamma)$,
	it is, therefore, natural, 	for the Lagrangian multiplier method, to define the following error indicator $\eta_T$ for each element $T \in \Th$, and estimator $\eta$, by
	\begin{equation}\label{eta}
	\begin{split}
	\eta_T &=  \sqrt{  
		\varsigma_T^2 |
		\resuno(T) |^2 + \sum_{F\in \iEdges \cap \partial T}  \varsigma_{F}^2 | \resdue(F) |^2  + 
		\sum_{F \in \bEdges \cap \partial T} \left(
		| \restre(F) |^2 + | \resquattro(F) |^2
		\right)},\\
	\eta &= 
	\sqrt{ \sum_{T \in \Th} \eta_T^2 }.
	\end{split}
	\end{equation}}

\newcommand{\Vertex}{\mathcal{X}_\Gamma}

{
If $g$ is not in $H^1(\Gamma)$, we can not get the full localization \cref{boundfurbo} of the residual $g - u_h$ on $\Gamma$. We can however resort, {for the two dimensional case, to \cite[Theorem 2.2]{faermann2000}, and, for the three dimensional case, to \cite[Lemma 3.1]{faermann2002},} which allow us to bound 
\[
\| g - u_h \|^2_{1/2,\Gamma} \lesssim \sum_{P \in \bVertex} | g - u_h |^2_{1/2,\patchP} 
\]
 where $\bVertex$ is the set of nodes of the mesh $\Th$ on $\Gamma$ and where for $P \in \bVertex$, $\patchP \subset \Gamma$ is the patch formed by the  boundary { facets} sharing $P$ as a vertex. 
For those patches $\patchP$ for which $g|_{\patchP} \in H^1(\patchP)$, $| g - u_h |_{1/2,\patchP}$ can be further bounded by $\underset{F \subset \patchP}{\sum} | \resquattro(F) |^2$. For the remaining patches, the $H^{1/2}(\patchP)$ semi-norm of the residual will have to be computed by evaluating the double integral involved in the definition of the fractional norm.}

{
	\begin{remark} 
			Note that, in the implementation of the method, we do not explicitly use the splitting $\mathcal{T}_h =  \mathcal{T}_h^1 \cup\mathcal{T}_h^2$, which is only needed for the theoretical analysis. 
	Remark also that, for the elements adjacent to $\Gamma$, for which $\rho^{-k}_T = \infty$, we always have $\varsigma_T = C_1$. 
	\end{remark}}

\section{Application to stabilized methods for the imposition of
  boundary conditions}\label{sec:stab}
In engineering practice it is often advantageous to use a stabilized
method instead of choosing the spaces so that the inf-sup condition is
satisfied. In this section we show how  the proposed framework can be adapted to two of the most well-known
stabilized methods, namely the Barbosa-Hughes
method \cite{BH91} and the Nitsche's method \cite{Nitsche1971berEV}.
{We assume for the sake of simplicity that $g \in H^1(\Gamma)$.}
 Both
the final results and the arguments are in the same spirit as 
\cref{thm:apost} above and therefore we only give sketches of the proofs.
\subsection{Indicators for the Barbosa--Hughes method}
The Barbosa--Hughes discrete problem reads: find $u_h \in V_h$, $\l_h \in \L_h$ such that for all $v_h \in V_h$, $\mu_h \in \L_h$ it holds that
\begin{gather}\label{BH1}
\int_\O a \nabla u_h \cdot \nabla v_h - \int_\Gamma \l_h v_h \pm \alpha \sum_{F \in \bEdges} h_{F} \int_{F} (a \partial_\nu u_h - \lambda_h)(a \partial_\nu v_h )= \int_\O f v_h,\\
\label{BH2}
\int_\Gamma u_h \mu_h - \alpha \sum_{F \in \bEdges}h_{F} \int_{F} (a \partial_\nu u_h - \lambda_h)  \mu_h = \int_\G g \mu_h.
\end{gather}
Here we use $\pm$ in front of the stabilization term in 
\cref{BH1}, to indicate that the analysis applies to both the
symmetric and antisymmetric version of the method.
The functional $L$ and $z,\z$ are defined as in the previous section. Similarly we have the following error representation by subtracting \cref{BH1} and \cref{BH2} from \cref{pbcont}: for arbitrary $z_h \in V_h$ and $\z_h \in \L_h$ it holds that 
\begin{equation*}
\begin{split}
L(\d) =& \int_\O a \nabla e \cdot \nabla z - \int_\G \d z + \int_\G e \z \\ 
=& 
\int_\O a \nabla e \cdot \nabla (z - z_h)  - \int_\G \d (z - z_h)+ \int_\G e (\z - \z_h) +
\int_\O a \nabla e \cdot \nabla z_h - \int_\G \d z_h + \int_\G e \z_h\\ 
=&
\int_\O a \nabla e \cdot \nabla (z - z_h)  - \int_\G \d (z - z_h)+ \int_\G e (\z - \z_h) -
\alpha \sum_{F\in \bEdges}h_{F} \int_{F} (a \partial_\nu u_h - \l_h) (\z_h \mp a \partial_\nu z_h).
\end{split}
\end{equation*}
\newcommand{\gPh}{g^P_{h}}
From \cref{prop:error_rep}, we have 
\begin{equation}\label{bound_BH}
\begin{split}
L(\d) =& \sum_{T \in \Th}\int_T (f + \divagrad u_h) (z - z_h) -
\sum_{F \in \iEdges} \int_{F}\jump{ a \partial_\nu u_h} (z-z_h)+ \sum_{F\in\bEdges}\int_{F}(\l_h - a \partial_\nu u_h)(z-z_h) \\
&+  \int_\Gamma (g - u_h)(\z -\z_h)  -
\alpha \sum_{F\in \bEdges}h_{F} \int_{F} (a \partial_\nu u_h - \l_h) (\z_h \mp a \partial_\nu z_h).
\end{split}
\end{equation}
We again set $z_h = \ClemProj z$, $\z_h = 0$.
The first three terms in \cref{bound_BH} can be bounded using 
\cref{A}, \cref{B} and \cref{C}.
However, for the fourth term in \cref{bound_BH}, contrary to the previous case,
we do not have that $u_h - g$ is orthogonal to the multiplier space,
therefore  \cref{boundfurbo} no longer holds. Instead we only have the following
weaker bound  \cite{faermann2000,faermann2002}.
Recall that $\bVertex$ denote the set of boundary vertices of the triangulation, and for each $P\in \bVertex$  denote by $\patchP \subset \Gamma$ the patch formed by the  boundary faces sharing $P$ as a vertex. We have 
 \begin{equation}\label{locH12}
\|  g - u_h \|_{1/2,\G}^2 \lesssim \sum_{P \in \bVertex} | u_h - g |^2_{1/2,\patchP} + \sum_{F \in \bEdges} h_{F}^{-1} \| u_h - g \|^2_{0,F}.
\end{equation}

We can further localize the term $| u_h - g |^2_{1/2,\patchP}$. In order to do so we add and subtract $\gPh \in \widecheck V_h|_\patchP$, where $\gPh$ is the $L^2(\patchP)$ projection onto the local space of continuous piecewise linears {on the $(d-1)$  dimensional local mesh $\Th|_\patchP$}, yielding
{
\[
|  g - u_h |^2_{1/2,\patchP} \lesssim  | u_h - \gPh |^2_{1/2,\patchP} + | \gPh - g |^2_{1/2,\patchP},
\]
}
which, combining with the inverse inequality, gives 
{
\[
| u_h -  \gPh  |^2_{1/2,\patchP} \lesssim { h_P^{-1} }\| u_h - \gPh  \|^2_{0,\patchP} \simeq \sum_{F \subseteq \patchP} h_{F}^{-1}\| u_h -  \gPh  \|^2_{0,F},
\] 
{where $h_P = \max_{F \in \patchP} h_F$ (remark that the shape regularity of the mesh implies that for all $F \subseteq \patchP$ we have $h_F \simeq h_P$).}
}
 Thanks to the fact that $g - g_h^P$ is orthogonal to the continuous piecewise linear functions, we have 
\[
 | \gPh - g |^2_{1/2,\patchP} \lesssim
 \sum_{F \subseteq \patchP} h_{F} | \gPh - g |^2_{1,F}.
\]
{
We also observe that, for $P$ a vertex of $F$
\[
\| g - u_h \|^2_{0,F} \lesssim \| g - \gPh \|^2_{0,F} + \| \gPh - u_h \|^2_{0,F}
\lesssim  \sum_{F \subseteq \patchP} h_{F} | \gPh - g |^2_{1,F} +  \| u_h - \gPh \|^2_{0,F}.
\]} 
Combining these bounds we easily obtain
\begin{equation}\label{D}
\begin{split}
\int_\Gamma (g - u_h)\z  &\leq \| g - u_h \|_{1/2,\Gamma}
\| \z \|_{-1/2,\Gamma} \lesssim \|g - u_h \|_{1/2,\Gamma}\\
&\lesssim \sqrt{\sum_{P \in \bVertex} \sum_{F \subseteq \patchP} (
h_{F}^{-1}  \| u_h - \gPh \|^2_{0,F} + h_{F} | \gPh - g |_{1,F}^2)}= \sqrt{\sum_{P \in \bVertex} \sum_{F \subseteq \patchP} |\rescinque(F,P)|^2},
\end{split}
\end{equation}
where, for $P$ a vertex of $F \subset\bEdges$ we define
\[
\rescinque(F,P) = \sqrt{
h_{F}^{-1}  \| u_h - \gPh \|^2_{0,F} + h_{F} | \gPh - g |_{1,F}^2} \,.
\]

Finally, we bound the additional term resulting from the stabilization, namely
\begin{equation}\label{bound_BH2}
\begin{split} 
\sum_{F\in \bE} h_{F} \int_{F} (a \partial_\nu u_h - \l_h) (a \partial_\nu (\ClemProj z)) 
& \leq \sqrt{
\sum_{F\in \bE}  h_{F} \| a \nabla u_h \cdot \nu - \l_h \|_{0,F}^2}
\sqrt{\sum_{F\in \bE} h_{F} \| a  \partial_\nu (\ClemProj z)  \|_{0,F}^2}\\
 \lesssim& \sqrt{
\sum_{F\in \bE}  h_{F} \|a \partial_\nu u_h - \l_h \|_{0,F}^2}.
\end{split}
\end{equation}
The last bound derives from 
 a standard trace inequality on the element $T$ associated to the
boundary face $F$ followed by an inverse inequality and an $H^1$ stability bound for $\ClemProj$
\begin{equation}
\begin{split}
\|a \partial_\nu (\ClemProj z ) \|_{0,F} &\lesssim \| \nabla(\ClemProj z) \|_{0,F} \lesssim h_T^{-1/2} \| \nabla (\ClemProj z) \|_{0,T} + h_T^{1/2} | \nabla (\ClemProj z) |_{1,T} \\ &\lesssim h_T^{-1/2} \| \nabla (\ClemProj z) \|_{0,T} \lesssim h_T^{-1/2}  | z |_{1,\patchT},
\end{split}
\end{equation}
which, together with (\ref{dual-estimate}), yields {
\[
\sum_{F\in\bEdges} h_{F} \|a \partial_\nu (\ClemProj z)  \|_{0,F}^2 \lesssim  \| z \|^2_{1,\Omega} \lesssim 1.
\] }
We then have
\begin{equation}\label{E}
\alpha \sum_{F\in\bEdges}
 h_{F} \int_{F} (a \nabla u_h \cdot \nu - \l_h) (a  \partial_\nu (\ClemProj z) )
 \lesssim \alpha \sqrt{
\sum_{F\in\bEdges}| \restre(F) |^2
	}
\end{equation}
where, we recall, $\restre(F) = h^{1/2}_F \|a \partial_\nu u_h - \l_h \|_{0,F}$.

Collecting the above bounds we obtain the a posteriori error estimate
for the Barbosa--Hughes formulation \cref{BH1}--\cref{BH2}:
\begin{equation}
\| \lambda - \lambda_h \|^2_{-1/2,\G} \lesssim  
\sum_{T\in \Th} \varsigma_T^2| \resuno(T) |^2
+
\sum_{F\in \iEdges}
\varsigma_{F}^2 |\resdue(F)|^2 
+(1+\alpha^2)
\sum_{F\in \bEdges}| \restre(F) |^2
+
\sum_{P \in \bVertex} \sum_{F \subseteq \patchP} |\rescinque(F,P)|^2.
\end{equation}

\subsection{Indicators for Nitsche's method}
Let us now consider Nitsche's method, which reads: find $u_h \in V_h$ such that for all $v_h \in V_h$, there holds
{
\begin{equation}\label{nitsche}
\begin{split}
&\int_\O a \nabla u_h \cdot \nabla v_h -\int_\G v_h (a \partial_\nu u_h ) \pm \int_\G u_h (a \partial_\nu v_h )+ \gamma \sum_{F\in \bEdges}h_{F}^{-1} \int_{F} u_h v_h \\
=& \int_\O f v_h  \pm \int_\G g (a \partial_\nu v_h) +   \gamma \sum_{F\in \bEdges}h_{F}^{-1} \int_{F} g v_h. 
\end{split}
\end{equation}
}

\newcommand{\resduebis}{\mathbf{r}_3}

Following the work of Stenberg \cite{Sten95}, { which focuses on the Poisson equation but which is easily adapted to \cref{pbcont},} Nitsche's method is equivalent to a Barbosa-Hughes method with the choice {$\Lambda_h = L^2(\G)$}. The solution $u_h,\l_h$ of \cref{BH1}-\cref{BH2} with $\L_h = L^2(\G)$ verifies that $u_h$ solves \cref{nitsche} with $\gamma = \alpha^{-1}$, and we have that, on $e \subset \G$, 
\begin{equation} \label{lambda-h-Nitsche}
\l_h =a \partial_\nu u_h + \gamma h_{F}^{-1} (g - u_h).
\end{equation}
Due to the equivalence, $L(\delta)$ has the same representation \cref{bound_BH} with $\alpha$ replaced by $\gamma^{-1}$.
With the same choice of $z_h$ and $\z_h$, the first and second terms can be bounded using \cref{A}  and \cref{B} respectively. The fourth term $\int_\Gamma (g - u_h)\z$ can be  bounded using \cref{D}. 
 For the remaining terms, observing that
\[
\restre(F) = h_{F}^{1/2}   \| \l_h - a \partial_\nu u_h \|_{0,F} = 
h_{F}^{1/2}  \| \gamma h_{F}^{-1} (g - u_h)  \|_{0,F} := \gamma \resduebis(F),
\]
with 
\[
\resduebis(F) = h_{F}^{-1/2} \| u_h - g \|_{0,F},
\]
which, combining with \cref{C} and \cref{E}, yields
{
\begin{equation}\label{CNitsche1}
	\sum_{F\in\bEdges}\int_{F}(\l_h -a \partial_\nu u_h)(z-\ClemProj z)   \mp
\gamma^{-1} \sum_{F \in \bEdges}  h_{F} \int_{F} (a \partial_\nu u_h - \l_h) (a \nabla (\ClemProj z) \cdot \nu)
\lesssim \sqrt{\sum_{F\in \bEdges} (1+\gamma^2)|  \resduebis(F) |^2}.
\end{equation}}
Collecting all, we obtain the following a posteriori
error bound for the normal flux computed using Nitsche's method.
{
\begin{equation}
\|a \partial_\nu u - \lambda_h\|_{-1/2,\G} \lesssim \sqrt{
\varsigma_T^2\sum_{T\in \Th} | \resuno(T) |^2 
+ \sum_{F \in \iEdges} \varsigma_{F}^2| \resdue(F) |^2  
+ \sum_{F \in \bEdges} (1+\gamma^2)  | \resduebis(F) |^2
+ \sum_{P \in \bVertex} \sum_{F \subseteq \patchP} |\rescinque(F,P)|^2},
\end{equation}
where $\lambda_h$ is given in \cref{lambda-h-Nitsche}.
}

Similarly as in \cref{eta}, we can define the corresponding error indicator $\eta_T$ and $\eta$ for the Barbosa--Hughes and Nitsche's methods.

\section{Numerical experiments}\label{sec:numerics}
{In this section we demonstrate the performance of the proposed error estimator on some simple, yet significant, two dimensional test cases.}
Firstly, we demonstrate the action of the weight $\varsigma_T$ defined in \cref{eq:def_weight}
in the adaptive mesh refinement procedure
 independently of any particular problem. For simplicity, we fix $C_1=1$.
 In the computation, we approximate $\rho_T$ by the following:
{
\[
	\rho_T \approx  \min_{x \in \mathcal{N}_{\triangle_T}} \dist{x}
\]
where $\mathcal{N}_{\triangle_T}$ is the set of all vertices on $\triangle_T$.}

We start with a $4$ by $4$ initial triangular mesh on a unit square domain, 
see \cref{fig:adaptive-w}(a). A total number of $7$ refinement steps are performed and the marking strategy identifies an element $K \in \mathcal{T}_h$ to be refined if 
\[
	\varsigma_T > 0.5  \varsigma_{T,max}, \quad  \mbox{where} \,  \varsigma_{T,max} = \max_{T \in \mathcal{T}_h}\varsigma_T .
\]
\cref{fig:adaptive-w} shows the meshes at various steps with $k=2$ and {$C_2 = 1.0$}. It is easy to observe that significantly more refinements are placed near the boundary. Further experiments also show that the refinements on the boundary become more dominant if we decrease the value of $C_2$ or increase the order of $k$. 

\newcommand{\cpoinc}{C_{\text{P},\patchT}}
\newcommand{\Ckbest}{C_{k,\text{best}}}

\begin{remark}\label{rem:4.1}
	We note that, while the precise value of the best constants $\cunoT$ and $\cdueT$ in Lemma \ref{lem:clem} is not known, it is possible to give an estimate of the ratio $\cdueT/\cunoT$, {in terms of the Poincar\'e constant for the patch $\patchT$}. Indeed, as $\ClemProj$ preserves polynomials of degree not greater than $ k$, we can  write
		\[
		\| u - \ClemProj u \|_{0,T} + h_T \| u - \ClemProj u \|_{1,T} \leq \cunoT h_T \inf_{p \in \mathbb{P}_k} | u - p |_{1,\patchT},
		\]	
and then we can choose $C_2$ such that $C_2/C_1$ is the smallest constant $\Ckbest$ for which
\[
 \inf_{p \in \mathbb{P}_k} | u - p |_{1,\patchT} \leq \Ckbest h_T^{k} | u |_{k+1,\patchT}.
\]
Such a constant may be estimated by recursively applying some upper bound for the Poincar\'e constant for the patch $\patchT$, which can be obtained, for instance, by the approach of \cite{VF11}. Its dependence on the polynomial degree $k$ can also be taken into account.
For this choice to be the most effective, we {would  however need} the upper bounds for $\Ckbest$ to be sharp. If this is not the case, we observe that the true error might present some more or less pronounced oscillations.
In our numerical tests, we tried several different values of $C_2$.
In all the cases considered, setting $C_2$ between $0.1$ to $1$ turns out to be  a reasonable choice. See also \cref{rem:4.1}.					
\end{remark}

%figures

%To add later
\begin{figure}[ht]
\centering
\begin{tabular}{cc}
\includegraphics[width=.30\textwidth]{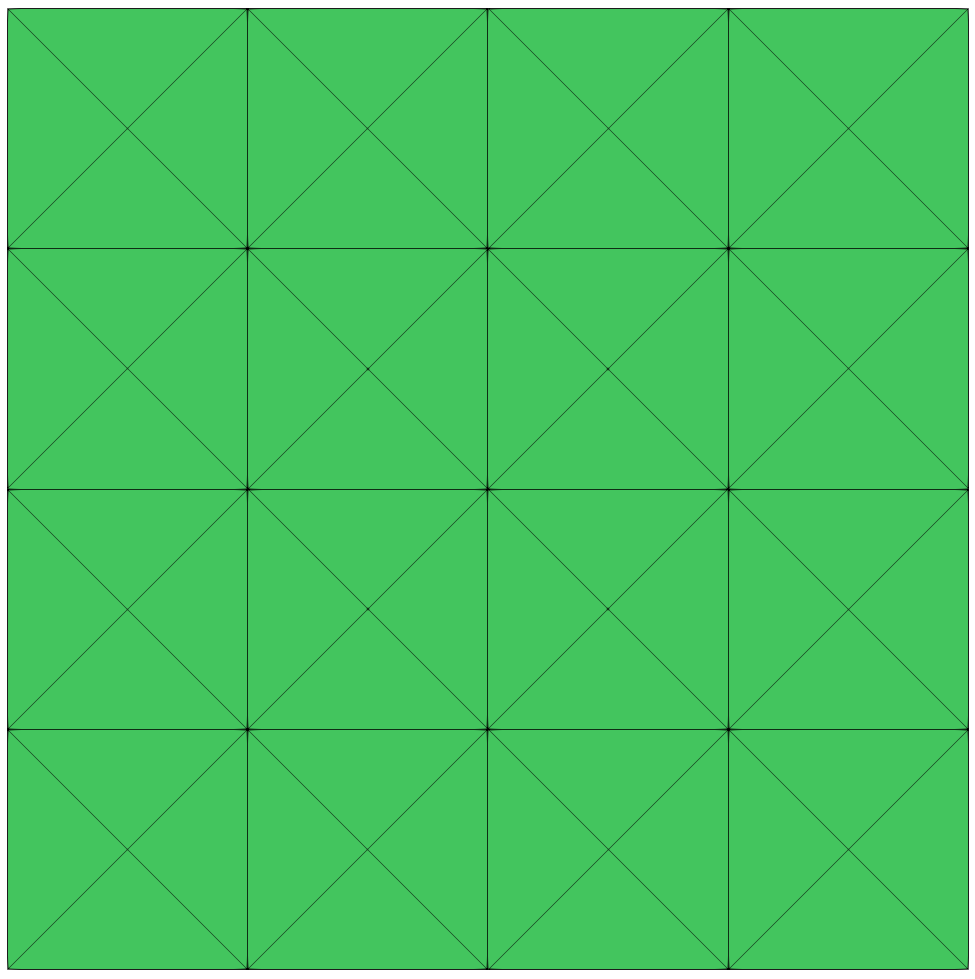}
&\includegraphics[width=.30\textwidth]{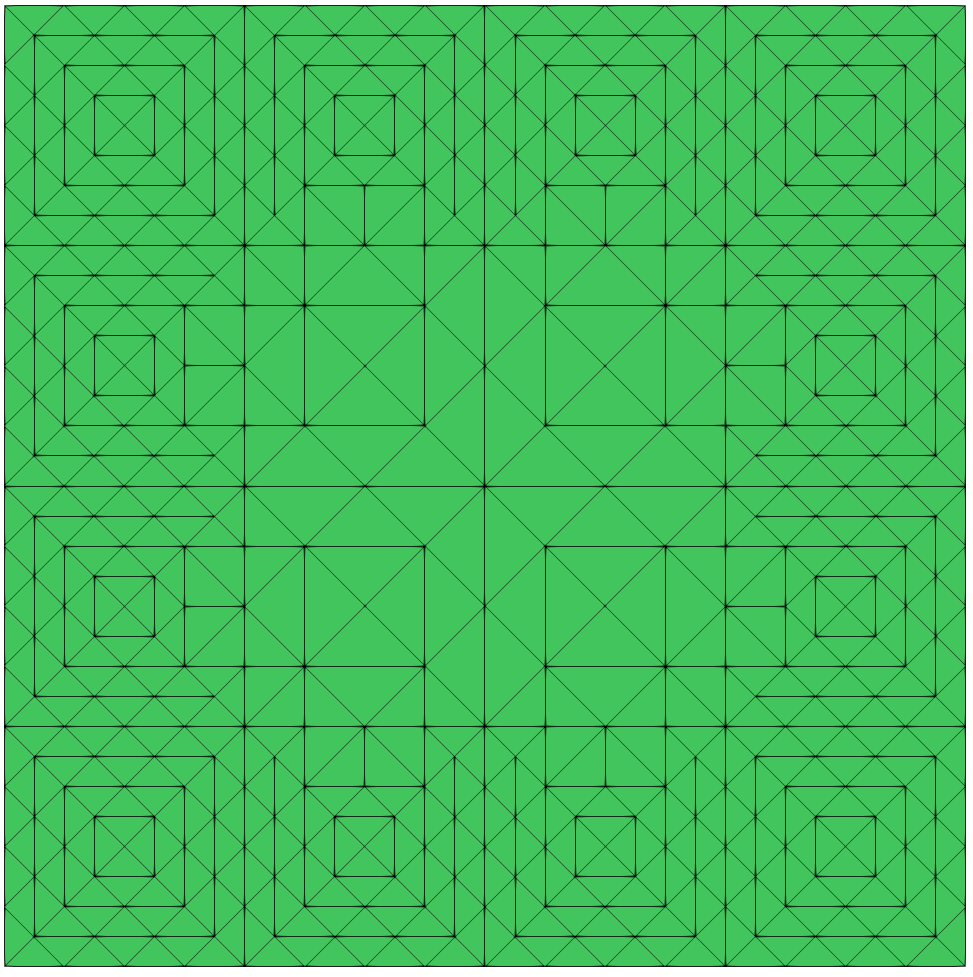}\\
(a) step 1&(b) step 3\\
\includegraphics[width=.30\textwidth]{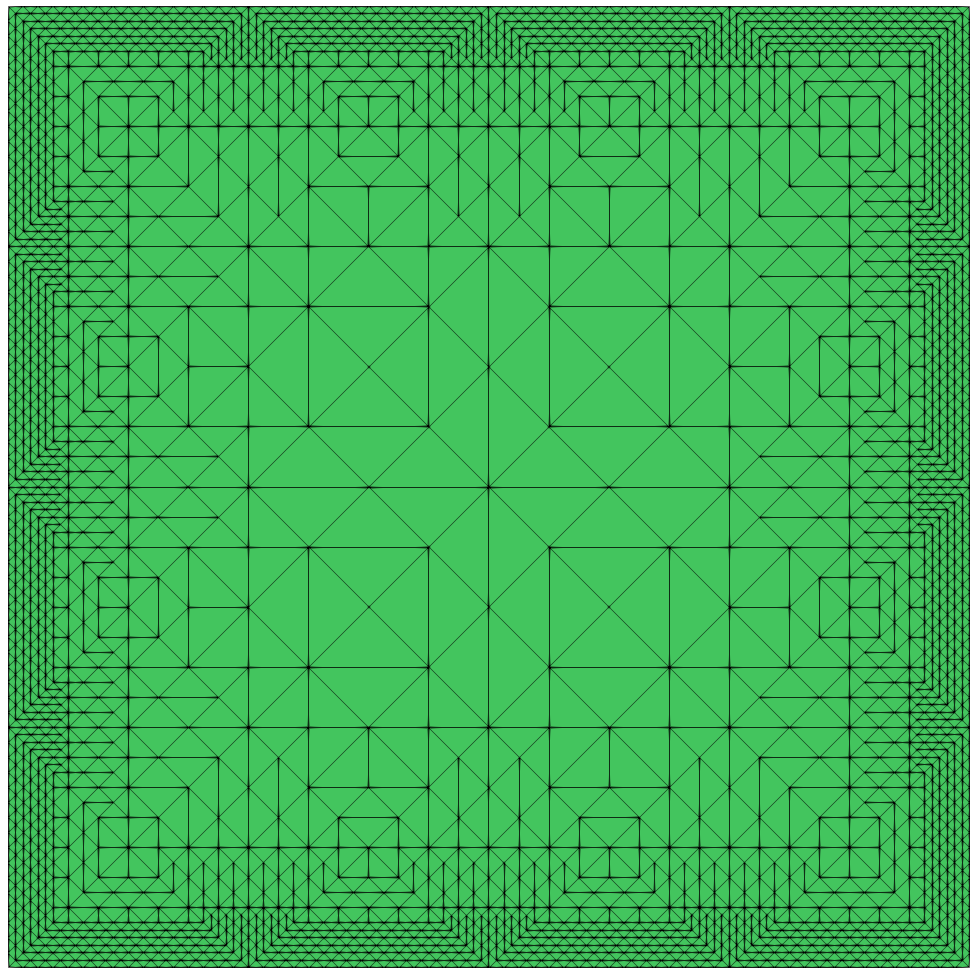}
&\includegraphics[width=.30\textwidth]{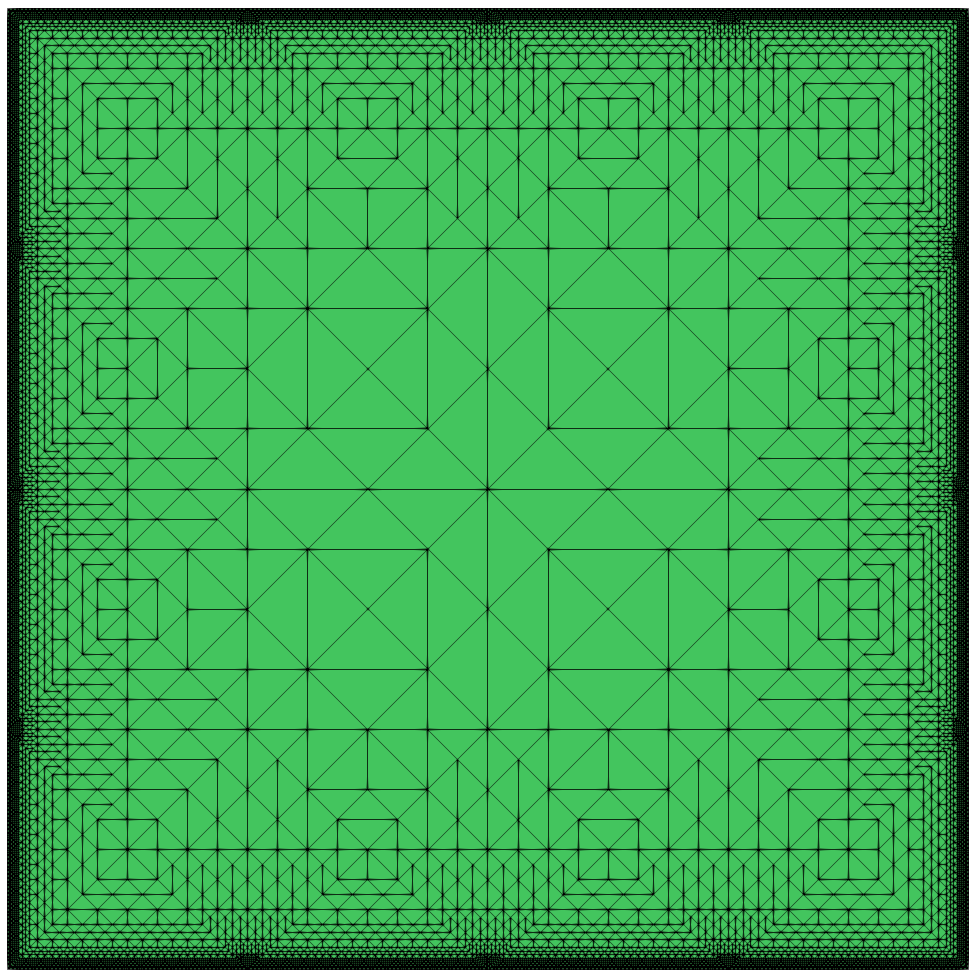}\\
(c) step 5&(d) step 7
\end{tabular}
\caption{Adaptive meshes based on $\varsigma_T$ with $k=2$ and $C_2=1.0$}
\label{fig:adaptive-w}
\end{figure}

\subsection{Computation of the true error}
In this subsection, we present two methods to compute the true error, i.e., $\|\lambda - \lambda_h\|_{-1/2, \Gamma}$, for the purpose of comparison.
From \cref{negative-half-norm-representation} and \cref{Riesz_lam}, 
\[
	\|\lambda - \lambda_h\|_{-1/2, \Gamma}^2 = 
	|\nabla w|_\Omega^2  \quad( \mbox{or } \left<\lambda - \lambda_h, w \right>_{\Gamma}),
\]
where
$w \in H^1(\Omega)$ satisfies the following variational problem:
\begin{equation}\label{dual-pro}
	\int_{\Omega}\nabla w \cdot \nabla v = 
	\int_\Gamma (\lambda - \lambda_h) v, 
	\quad  \mbox{and} \quad \int_{\Gamma} w =0 \quad \forall \, v\in H^1(\O).
\end{equation}

Note that (\ref{dual-pro}) is a pure Neumann problem. The compatibility of the 
solution is guaranteed since $\int_{\Gamma} (\lambda - \lambda_h) =0$
for all aforementioned numerical methods.  
We approximate the true error in each refinement step using a two order higher finite element method on a finer mesh (compared to the mesh used in the adaptive procedure). 
We let $w_h \in  V_h^{k+2}$ denote the Galerkin projection of $w$ on $V_h^{k+2} = \{ v \in H^1(\O): v|_T \in P^{k+2}(T) \quad \forall \,T \in \tilde{\mathcal{T}}_h \}$. Here $\tilde{\mathcal{T}}_h$  is the finer mesh.
We then approximate the error by
\begin{equation}\label{true-error}
	\|\lambda - \lambda_h\|_{-1/2, \Gamma}^2 \approx
	|\nabla w_h|_\Omega^2 \quad (\text{or }  \int_{\Gamma}(\lambda - \lambda_h) w_h).
\end{equation}

When $\lambda$ does not have enough regularity, using (\ref{true-error}) to accurately compute the true error becomes infeasible as a very fine mesh is  required to guarantee the accuracy. 
We therefore introduce another method to compute the true error by exploring  properties of  the wavelet decomposition. Indeed, it is known that, by expanding a function in $H^{-1/2}(\Gamma)$ based on a suitable wavelet basis, an equivalent $H^{-1/2}(\Gamma)$ norm can be computed by taking a  weighted $L^2$ norm of the coefficient vector. The latter can be efficiently computed by applying a wavelet transform \cite{CDF}.
This  only requires computations on $\partial \Omega$, therefore we are able to compute the true error to a satisfactory accuracy even for low regularity $\lambda$.

More precisely, given $v \in H^{-1/2} (\Gamma)$, we aim at computing $\|v\|_{-1/2, \Gamma}$. In order to do so, we consider the sequence of spaces $\{V_j\}_{j=0}^\infty$ such that $V_j\subset L^2(\Gamma)$ is the space of piecewise constant functions on the embedded uniform grid on $\Gamma$  with mesh size $|\Gamma| 2^{-j}$. We denote by {$\{x^j_k\}_{k=0}^{2^j-1}$}, the nodes of the corresponding mesh, which we assume to be ordered counter-clock wise.
For $v \in V_j$, we can compute the vector $\mathbf{v}_j$ of length $2^j$
\[
	\mathbf{v}_j :=\{v_{jk}\}_{k=0}^{2^j-1} \quad \mbox{and} \quad
	v_{jk} = \dfrac{2^{j/2}}{|\Gamma|}\int_{x^j_k}^{x^j_{k+1}} v.
\] 
$\{v_{jk}\}_{k=0}^{2^j-1}$ is regarded as the coefficients of the $L^2(\Gamma)$ orthonormal bases consisting of the normalized characteristic functions on the elements of the grid. As $V_j \subset V_{j+1}$, for all level $j$ we can decompose $v_{j+1} \in V_{j+1}$ as $v_{j+1} = v_j + d_j$, with $v_j \in V_j$ obtained by applying a suitable oblique projector $P_j$ to $v_{j+1}$. This gives us a telescopic expansion of all function in $V_M$ as $v_M = v_0 + \sum_{j=0}^{M-1} d_j$, and, passing to the limit as $M$ goes to infinity, of all functions in $L^2(\Gamma)$ as $v = v_0 + \sum_{j=0}^{\infty} d_j$. Given $\mathbf{v}_{j+1}$, we can compute $\mathbf{v}_j :=\{v_{jk}\}_{k=0}^{2^j-1} $ and 
$\mathbf{d}_j := \{d_{jk}\}_{k=0}^{2^j-1}$ (this last one being the vector of coefficients of $d_j$ with respect to a suitable basis for the space $W_j = (1-P_j)V_{j+1}$),  by applying a {\em low-pass filter} $h$ (strictly related with the projector $P_j$), and the {\em band-pass filter} $g = [1,-1]$: 
\[
	v_{jk} = \sum_{l=0}^L \dfrac{\sqrt{2}}{2} h(l)\, v_{j+1, 2k+l} \quad \mbox{and}\quad
	d_{jk} = \sum_{l=0}^1 \dfrac{\sqrt{2}}{2} g(l)\, v_{j+1, 2k+l}  = \dfrac{\sqrt{2}}{2} \left( v_{j+1,2k} - v_{j+1,2k+1} \right),
\]
where $L+1$ is the length of the low-pass filter $h$.
 In the above computation the function $v$ is considered as periodic, so that, when the index $2k+l>2^{j+1}-1$, we extend the vector $\mathbf{v}_{j+1}$ as $v_{j+1,2^{j+1}+k} =  v_{j+1,k},  k \ge 0$.
For suitable choices of the low pass filter $h$, the following norm equivalence holds for all $v \in H^{-1/2}(\O)$ (\cite{D}) 
\[
	\|v\|_{-1/2, \Gamma}^2 \simeq \|\mathbf{v}_{0}\|_2^2 + \sum_{j=0}^\infty 2^{-j} \|\mathbf{d}_j\|_2^2,
\]
where $\|\cdot\|_2$ denotes the Euclidean  norm. % on $\mathbb{R}^{2^j}$.
In our experiments we choose the so called {\em (2,2)-biorthogonal wavelet} (see \cite{CDF}), for which the low pass filter $h$ is
\[
h = \dfrac{\sqrt{2}}{2} [3/128, -3/128, -11/64, 11/64, 1, 1, 11/64, -11/64, -3/128, 3/128].
\] 
By choosing $M$ big enough and projecting $v$ onto $V_M$ (in our tests we use the $L^2$ orthogonal projection), we approximate the norm by %\noteHe{(changes made below. function symbols to vector coefficient.)}
\begin{equation}\label{wavelet-computation}
	\|v\|_{-1/2, \partial \O}^2 \approx  \|\mathbf{v}_{0}\|_2^2 + \sum_{j=0}^{M-1} 2^{-j} \|\mathbf{d}_j\|_2^2.
\end{equation}

%%%%%%%%%%%%%%%%%%%%%%%%%%%%%%%%%%%%%%%%%%%%%%%%%%%%%%

\subsection{Test results}
\textcolor{blue}{Before presenting the results of our numerical tests, let us recall what the dependence of the error on the number of degrees of freedom is expected to be for an order $k$ method on either a uniform or a boundary concentrated mesh: letting $h$ denote the mesh size on the boundary and $N$ the total number of degrees of freedom, we have $h \simeq N^{-1/2}$ for uniform meshes, and $h \simeq N^{-1} |\log(N)|$ for boundary concentrated meshes. For a smooth solution, the error on the normal flux for optimal order $k$  method  will behave like $h ^k$, that is, $N^{-k/2}$ for uniform grids and  $N^{-k} |\log(N)|^k$ for boundary concentrated meshes. }

\textcolor{blue}{To assess the performance of our estimator, we test it on the
	 Lagrangian method without stabilization and on Nitsche’s method (the Barbosa-Hughes method being equivalent to the latter). 
	  Nitsche's method with polynomial degree $k$ is optimal, i.e., it yields an order $k$ rate of convergence, on uniform meshes (see \cref{tab:ex1-Nitsche-k=1}).   
	 For the Lagrangian method, the rate of convergence depends on the choice of the multiplier. We test two choices: discontinuous piecewise polynomials of  order $k'= k-2$ and continuous polynomials of order $k' = k$. Both choices yield inf-sup stable discretizations, yet they are both suboptimal (see \cref{tab:ex1-LM}): the first choice only provides, for the normal flux, an approximation of order at most $k-1/2$, at the cost of using an order $k$ method in the bulk, while, in the presence of corners, the second only allows for an order 1 approximation of the normal flux, independently of $k$, as it involves approximating a discontinuous function (the normal flux, in the presence of corners) by means of continuous functions. 
 	  We point out that we are in no way advocating such choices as recommended methods for solving the problem considered (other choices for the multiplier, see \cref{rem1.1}, allowing for optimality, are of course to be preferred for the actual computation of the flux). However, considering such suboptimal cases allows us to put the robustness of our method to the test, and to show that the 	  refinement driven by our estimator can somehow make up for the lack of optimality.
	 }

\begin{example}\label{ex1}
In this example, we consider the Poisson equation on the unit square domain with right hand side and boundary data chosen so that the solution is the Franke function \cite{franke1979}
\begin{equation*}
\begin{split}
	u(x,y)  =&0.75 \exp{\left(-(9x-2)^2/4 - ( 9y-2)^2/4\right)} 
	+ 0.75 \exp{(-(9x+1)^2/49 - (9y+1)/10)}\\
	&+0.5\exp{(-(9x-7)^2/4 - (9y-3)^2/4)}
	-0.2 \exp{(-(9x-4)^2 - (9y-7)^2)}.
\end{split}
\end{equation*}
This function has two peaks at $(2/9, 2/9)$ and $(7/9,1/3)$ and one sink at $(4/9, 7/9)$. %(see  \cref{Fig:Ex1-Franke}).
\end{example}

We firstly test the convergence rate of the true error $\| \lambda - \lambda_h\|_{-1/2,\Gamma}$ on uniform meshes. 
The true error is computed using the aforementioned two methods. 
We denote by $E_1$ the error computed by (\ref{true-error}) and by $E_2$ the error computed using the wavelet in \cref{wavelet-computation} with $M = 20$. The problem (\ref{true-error}) is solved on a finer uniform mesh with mesh size $h=1/64$. 
Tables \ref{tab:ex1-Nitsche-k=1} and \ref{tab:ex1-LM} show the convergence rates for $E_1$.
Observe that these are in agreement with the expected convergence rates given by the standard error estimates for the two methods, that is order $1$ (resp $2$) for Nitsche's method with $k = 1$ (resp. $k=2$), and 
\textcolor{blue}{order $3/2$  (resp.  $1$) for the Lagrangian multiplier method with $k=2$, $k'= 0$, (resp. $k=2$, $k'= 2$)}.
\textcolor{blue}{
From Tables \ref{tab:ex1-Nitsche-k=1} and \cref{tab:ex1-LM}, we also observe that the ratio between $E_2$ and $E_1$ is relatively stable (the fluctuation of the ratio is likely caused by the inaccurate computation of $E_1$). In particular, for Nitsche's method, the ration $E_2/E_1$ remains close to $0.25$ for both orders.
These results, therefore, confirm that $E_2$ is equivalent to the true error for both the Nitsche and Lagrangian multiplier methods. }

\begin{table}[ht]
\caption{ \footnotesize{\cref{ex1}: Convergence rates for Nitsche's method on uniform meshes}}
\label{tab:ex1-Nitsche-k=1}
\begin{center}
{
	\begin{tabular}{||c|c c|c c|c c|c c||}
	\hline
	&\multicolumn{4}{|c|}{$k=1$}&\multicolumn{4}{|c|}{$k=2$}\\
	\hline
	h & $E_1$ & rate & $E_2$ &  $E_2/E_1$& $E_1$ & rate & $E_2$ & $E_2/E_1$\\
	\hline
	1/8   &    3.35E-1  & 0.40    &    8.53E-2     & 0.25& 2.86E-1&3.31&5.17E-2&0.18\\
	1/16 &  1.73E-1    & 0.95    &   4.51E-2      & 0.26& 3.19E-2&3.16&7.50E-3&0.23\\
	1/32 &  8.66E-2    & 1.00    &   2.26E-2      & 0.25 &4.69E-3&2.77&1.44E-3&0.30\\
	1/64 &  4.33E-2    & 1.00    &   1.13E-2      & 0.26& 2.51E-4&2.10&8.15E-5&0.32\\
	\hline
	\end{tabular}
}
\end{center}
\end{table}

\begin{table}[ht]
\caption{ \footnotesize{\cref{ex1}: Convergence rates for Lagrangian Multiplier method on uniform meshes}}
\label{tab:ex1-LM}
\begin{center}
{
	\begin{tabular}{||c|c c|c c|c c|c c||}
	\hline
	&\multicolumn{4}{|c|}{$k=2,k'=0$}&\multicolumn{4}{|c|}{$k=2,k'=2$}\\
	\hline
	h & $E_1$ & rate & $E_2$ &  $E_2/E_1$& $E_1$ & rate & $E_2$ & $E_2/E_1$\\
	\hline
	1/8   & 4.58E-2 &1.88  &1.14E-2 &0.25  & 9.83E-3 &2.67&3.13E-2&3.18\\
	1/16 &1.43E-2  &1.68  &3.97E-3 &0.28  & 2.65E-3 &1.89&7.74E-3&2.92\\
	1/32 & 4.80E-3 &1.57  &1.40E-3 &0.29  &1.18E-3  &1.15&2.60E-3&2.19\\
	1/64 & 5.62E-4 &1.54  &1.82E-4 &0.32  & 5.88E-4 &1.01&1.15E-3&1.96\\
	\hline
	\end{tabular}
}
\end{center}
\end{table}

 We now test the adaptive mesh refinement (AMR) procedure for the Lagrangian method. In the adaptive procedure, we set the stopping criteria such that the total number of degree of freedoms (DOFs) less than $20,000$. The marking strategy is set such that an element $T$ is marked to be refined if $
 	\eta_K \ge 0.5 \eta_{K,max}$.
 \textcolor{blue}{In this example, we set $C_2 = 1.0$}. The initial mesh is set to be the $4 \times 4$  mesh in \cref{fig:adaptive-w}(a). 
 For comparison, we also perform the adaptive mesh refinemet procedure using the classical residual based error estimator (AMRc) without any dual weights .
 For the Lagrangian method, it is defined as

\[
	\eta_{classical} =
 \sqrt{  \sum_{T\in \Th}
  |\resuno(T) |^2 + \sum_{F \in \iEdges}   | \resdue(F) |^2  + 
\sum_{F \in \bEdges} \left(
| \restre(F) |^2 + h_F^{-1}\|g -u_h\|_{0,F}^2 \right) }
\]
and for the Nitsche's method \cite{BHL20} is defined as

\[
	\eta_{classical} = 
	 \sqrt{  \sum_{T\in \Th}
 \textcolor{blue}{|} \resuno(T) |^2 + \sum_{F \in \iEdges}   | \resdue(F) |^2  + 
\sum_{F \in \bEdges} 
\gamma^2  h_F^{-1}\|g -u_h\|_{0,F}^2  }.
\]
It is well known that $\eta_{classical}$ is optimal in minimizing the energy norm of the error, i.e., $\| \nabla (u - u_h)\|_{0,\O}$. 
Note that comparing with \cref{eta}, the $H^1$ norm in $\resquattro(F)$ is reduced to $L^2$ norm on $\Gamma$ with adjusted weights for the Lagrangian method.

\cref{Ex1-LM-mesh} shows the final meshes for the adaptive Lagrangian Multiplier method ($k=2, k'=0$) using respective $\eta_{classical}$(left) and $\eta$(right). It can be seen that the mesh generated by $\eta_{classical}$ has dense refinements around the interior peaks and sinks while the mesh generated by $\eta$ has more dense refinements near the boundary and almost completely ignore the peaks and sinks in the interior domain.

\begin{figure}[ht]
\centering
\begin{tabular}{cc}
\includegraphics[width=.30\textwidth]{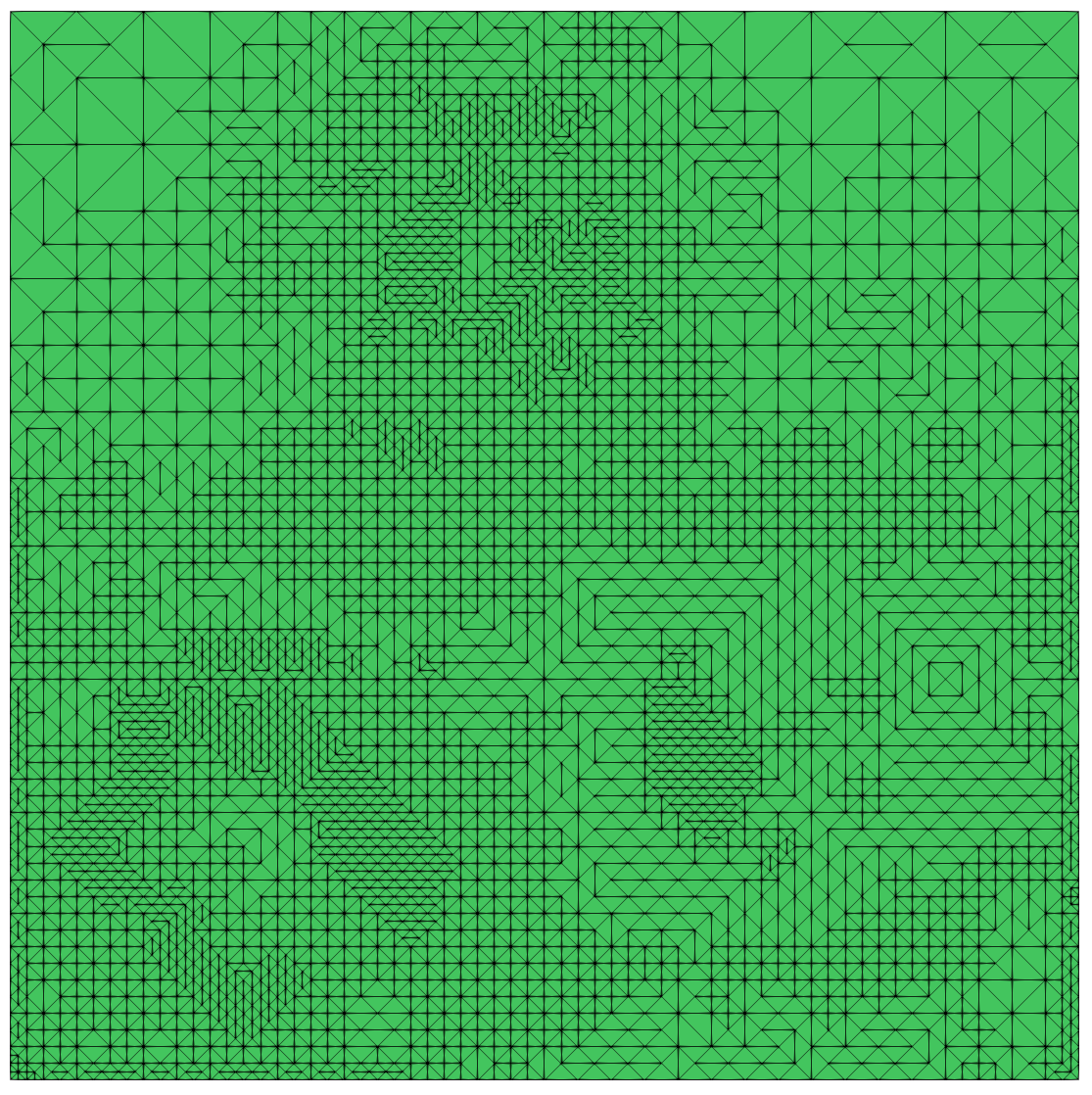} 
&
\includegraphics[width=.30\textwidth]{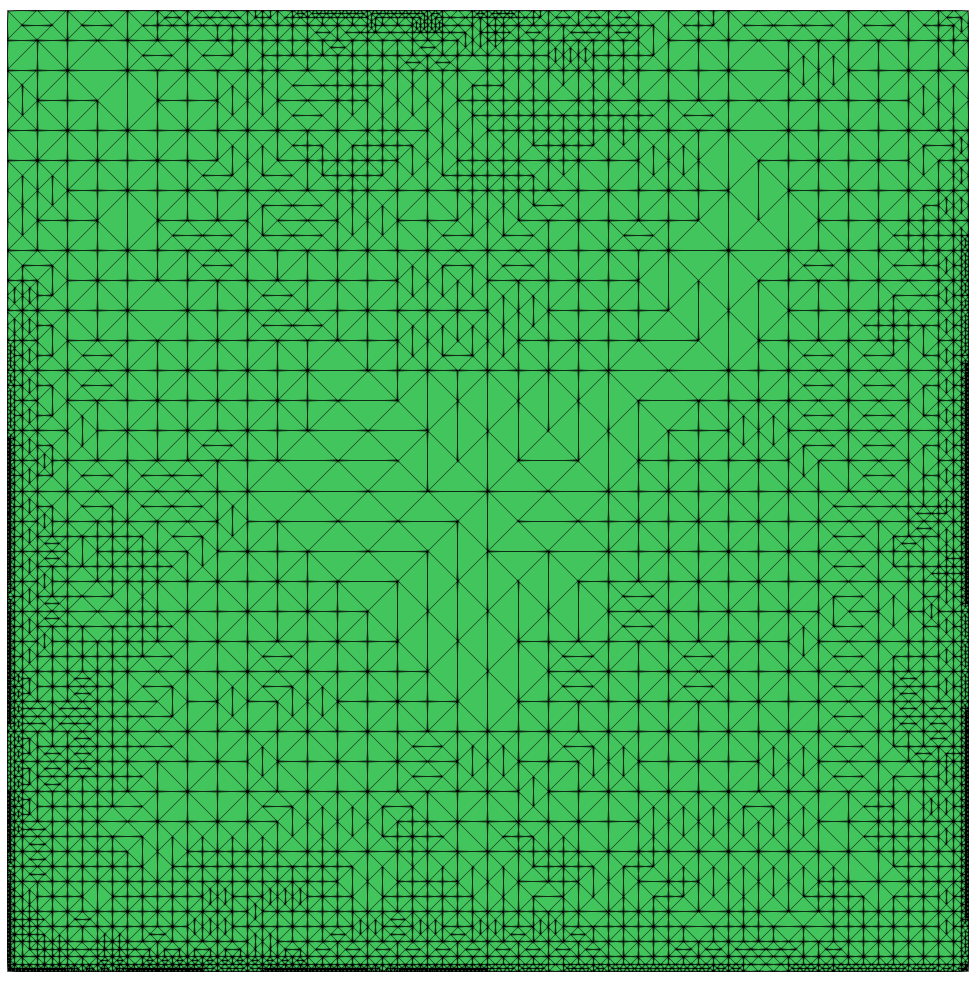}\\
{(a) by $\eta_{classical}$} &{(b) by $\eta$}
\end{tabular}
\caption{\cref{ex1}. Final meshes for the Lagrangian method ($k=2, k'=0,C_2=1.0$)} 
 \label{Ex1-LM-mesh}
\end{figure}

In the log-log plots \cref{Ex1-LM-error-a}--\cref{Ex1-LM-error-c}, we compare the convergence of  true errors and estimators. 
The purpose of the convergence figures is to compare  the two adaptive procedures using respectively the  dual-weighted and the classical non-weighted error estimators. From \cref{Ex1-LM-error-a}, we see that the error driven by $\eta$ converges faster than the one driven by 
$\eta_{classical}$, which already has the order $N^{-1}$ with $N$ being the total number of DOFs.
In comparison with rates attained by uniform refinement, that are provided in the \cref{tab:ex1-LM} and \cref{tab:ex1-Nitsche-k=1}, the relationship is that the rate obtained by $\eta_{classical}$ is higher or equal than the uniform approximation rate, and that the rate obtained by $\eta$ is higher than that obtained by $\eta_{classical}$.
\textcolor{blue}{
More in detail,	in \cref{Ex1-LM-error-a} we display two reference straight lines: the slope $-1$ of the first line
  refers to the approximation rate in the energy norm that can be attained by the best approximation with order $k$ finite elements on a quasi uniform grid with $N$ degrees of freedom, which also provides an upper bound for the corresponding error of the normal flux. The slope of the second reference line is numerically evaluated by linear regression
  of the data set $(\log(N), \log(E))$
from the AMR with the proposed estimator $\eta$. For this case its value is $\sim-1.5$.   For the figures thereafter, the same strategies will be used to present the reference slopes. }

 \cref{Ex1-LM-error-b} shows that both methods display the same rate of convergence with respect to the number of DOFs on the boundary. However, for the same total number of DOFs, much more DOFs are located on the boundary by $\eta$. More precisely, \cref{Ex1-LM-error-c} shows that the ratio between the numbers of boundary DOFs and the total DOFs gradually gets higher for the meshes generated by $\eta$ in the AMR procedure.
 
\begin{figure}[ht]
    \centering
        \subfloat[]{\includegraphics[width=.30\textwidth]{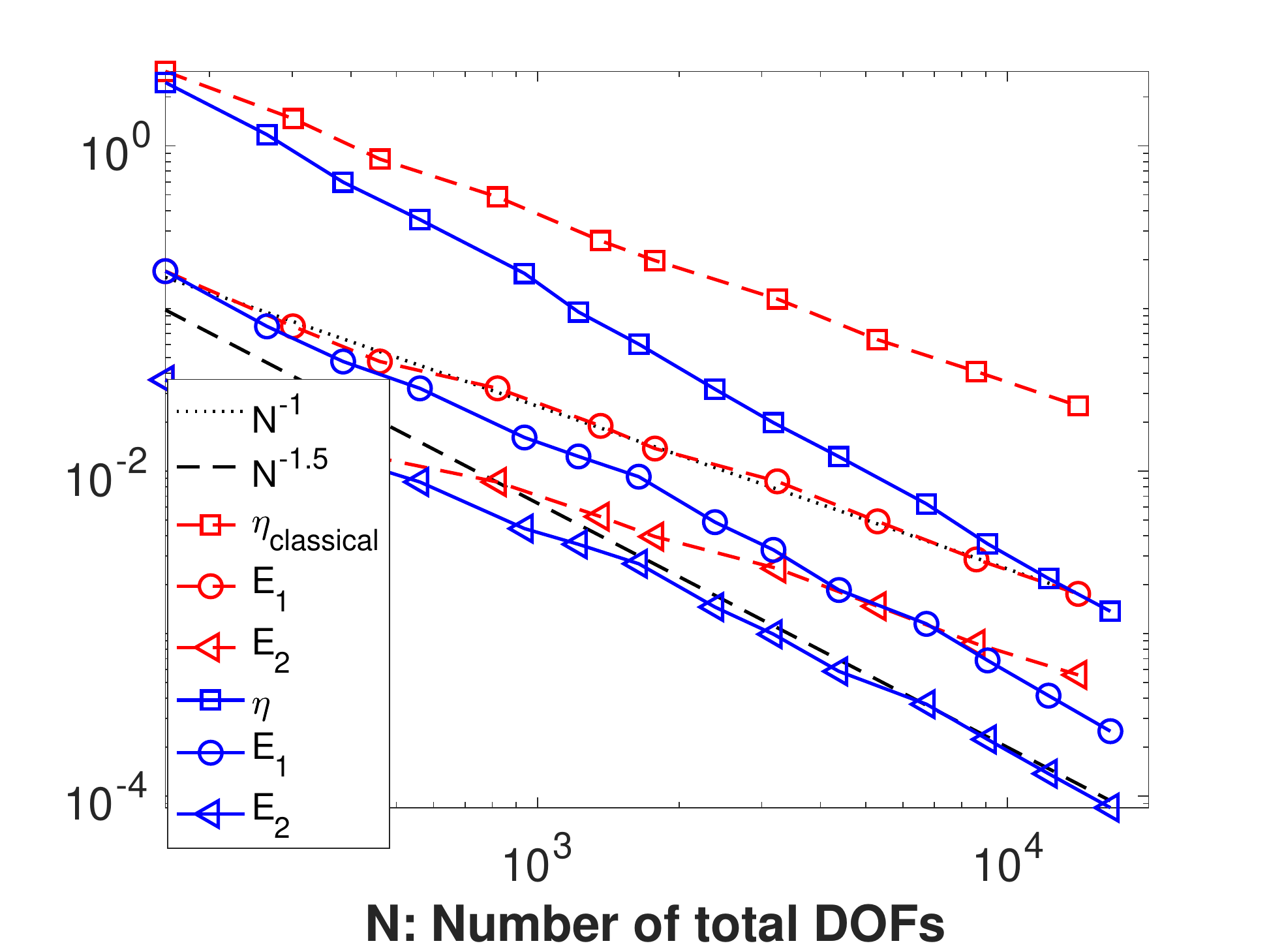}
        \label{Ex1-LM-error-a}}                                                        
    \hfill
        \subfloat[]{\includegraphics[width=.30\textwidth]{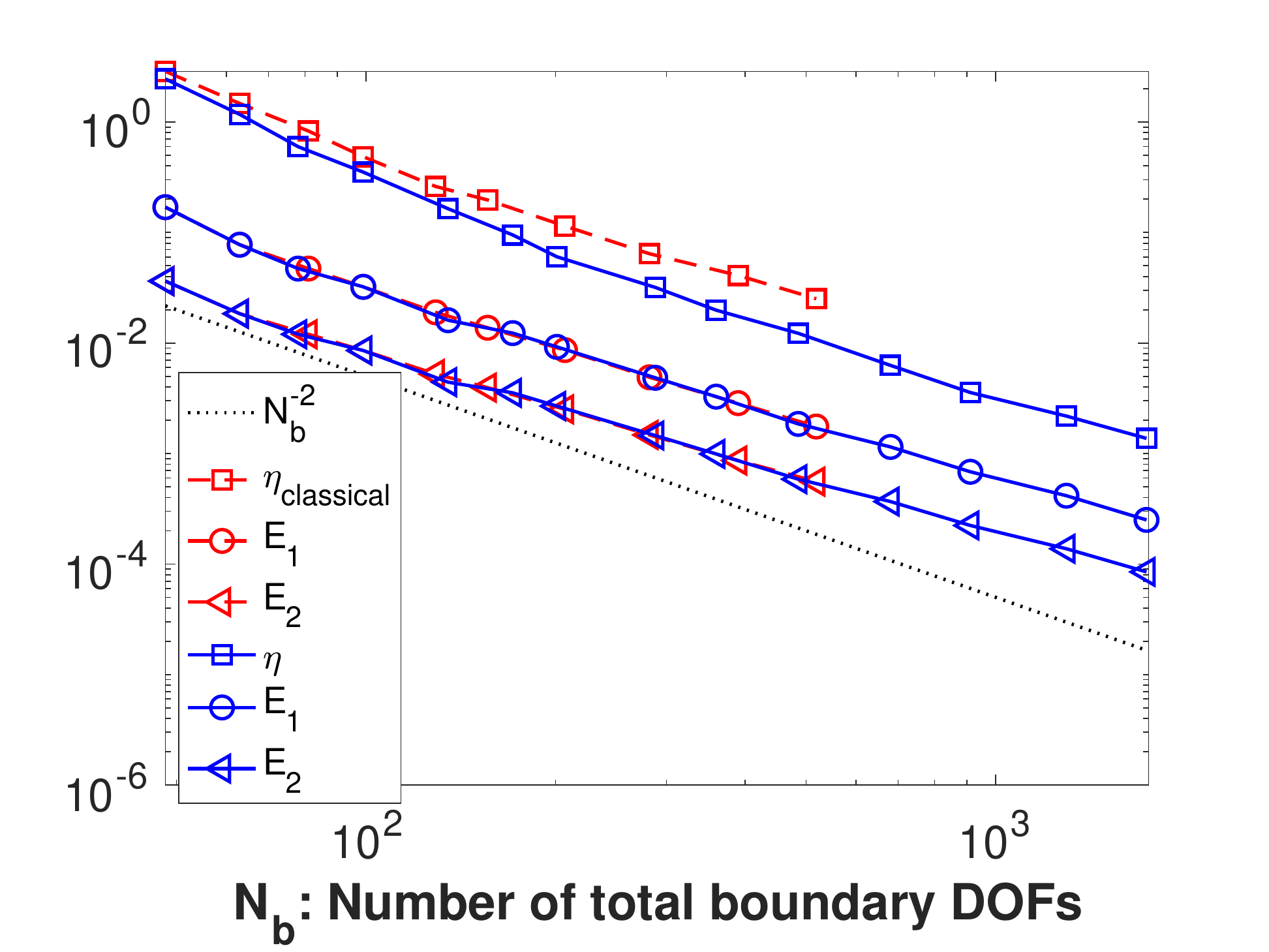}
        \label{Ex1-LM-error-b}}
         \hfill
        \subfloat[]{\includegraphics[width=.30\textwidth]{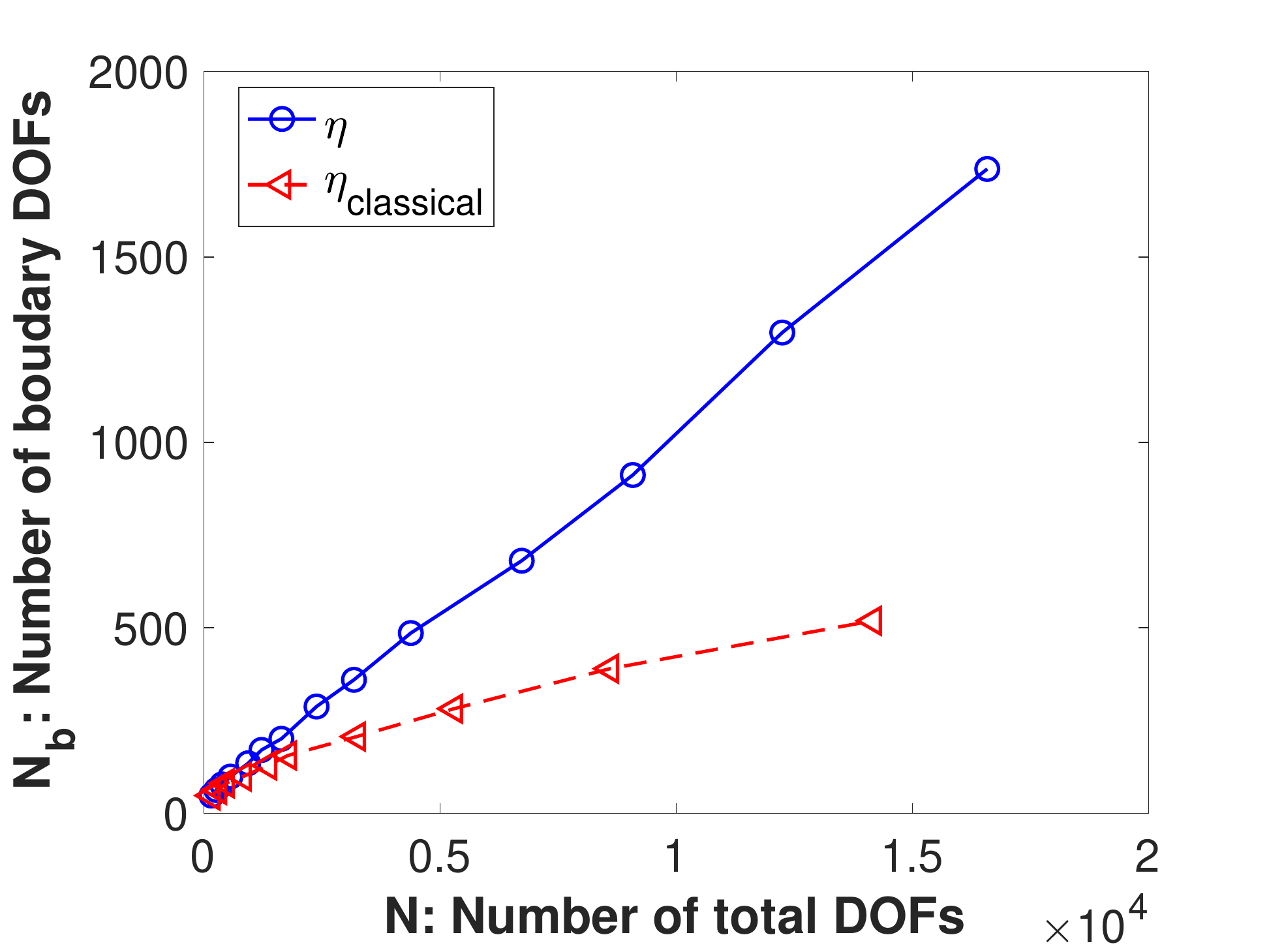}
        \label{Ex1-LM-error-c}}
\caption{ \cref{ex1}. Convergence comparison for Lagrangian method ($k=2, k'=0,C_2=1.0$)}
\end{figure} 

\begin{figure}[ht]
\centering
\begin{tabular}{cc}
\includegraphics[width=.30\textwidth]{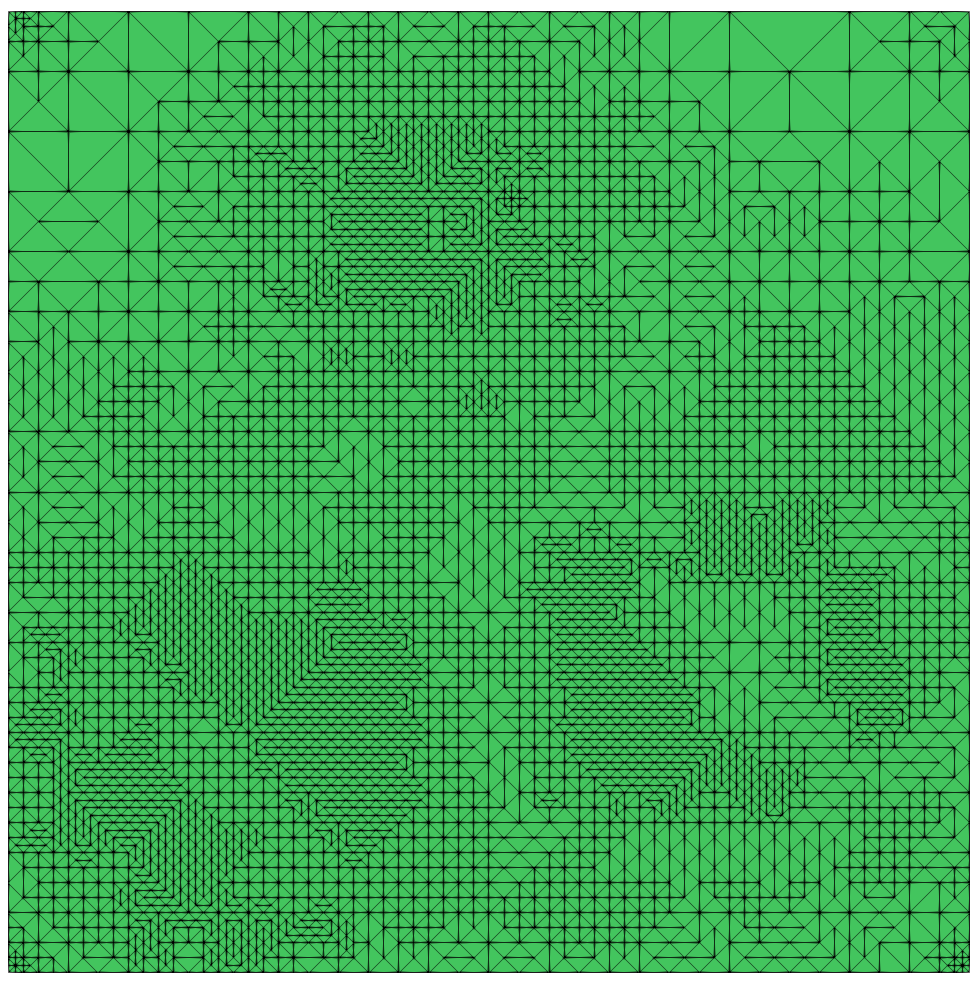} 
&
\includegraphics[width=.30\textwidth]{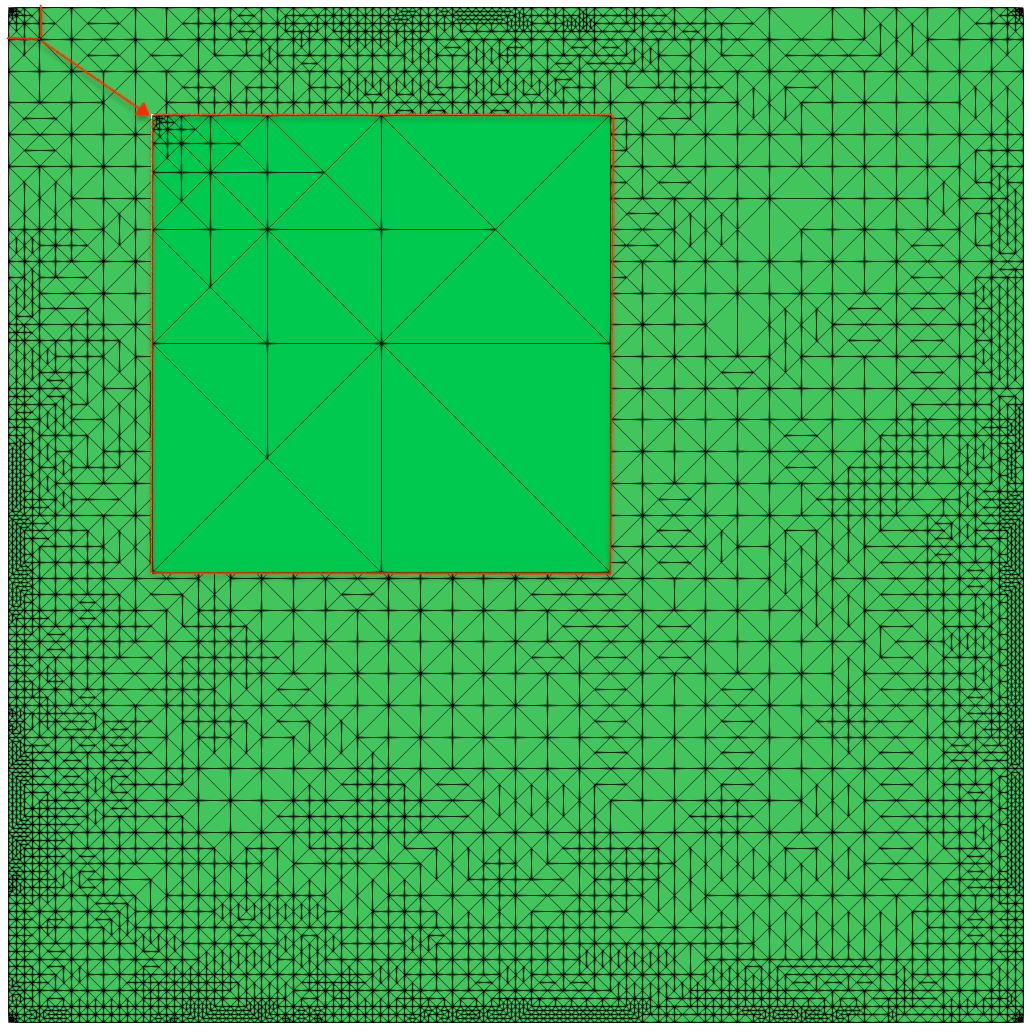}\\
{(a) by $\eta_{classical}$} &{(b) by $\eta$}
\end{tabular}
\caption{\cref{ex1}. Final meshes for the Lagrangian method ($k=2, k'=2,C_2=1$), including, on the right, a zoom on the upper left corner.} 
 \label{Ex1-LM-mesh-k'=2}
\end{figure}

\textcolor{blue}{We also test \cref{ex1} using the Lagrangian Multiplier method with $k=2$, $k'=2$ and
\begin{equation}
\Lambda_h = \{
\lambda \in C^0(\Gamma): \ u|_{\face} \in \mathbb{P}_{2}(\face), \ \forall \face \in \Th|_\Gamma \}.
\end{equation}
Since  in this test the domain has corners, and, consequently, $\lambda$ is discontinuous, optimal approximation for the multiplier can not be achieved, as $\lambda_h \in C^0(\Gamma)$. This also shows in \cref{tab:ex1-LM} for the uniform refinement. In \cref{Ex1-LM-mesh-k'=2}b, we observe that the mesh is densely refined around the corners which indicates that the error estimator $\eta$ successfully captures the error on the corners.
}

\begin{figure}[ht]
    \centering
        \subfloat[]{\includegraphics[width=.30\textwidth]{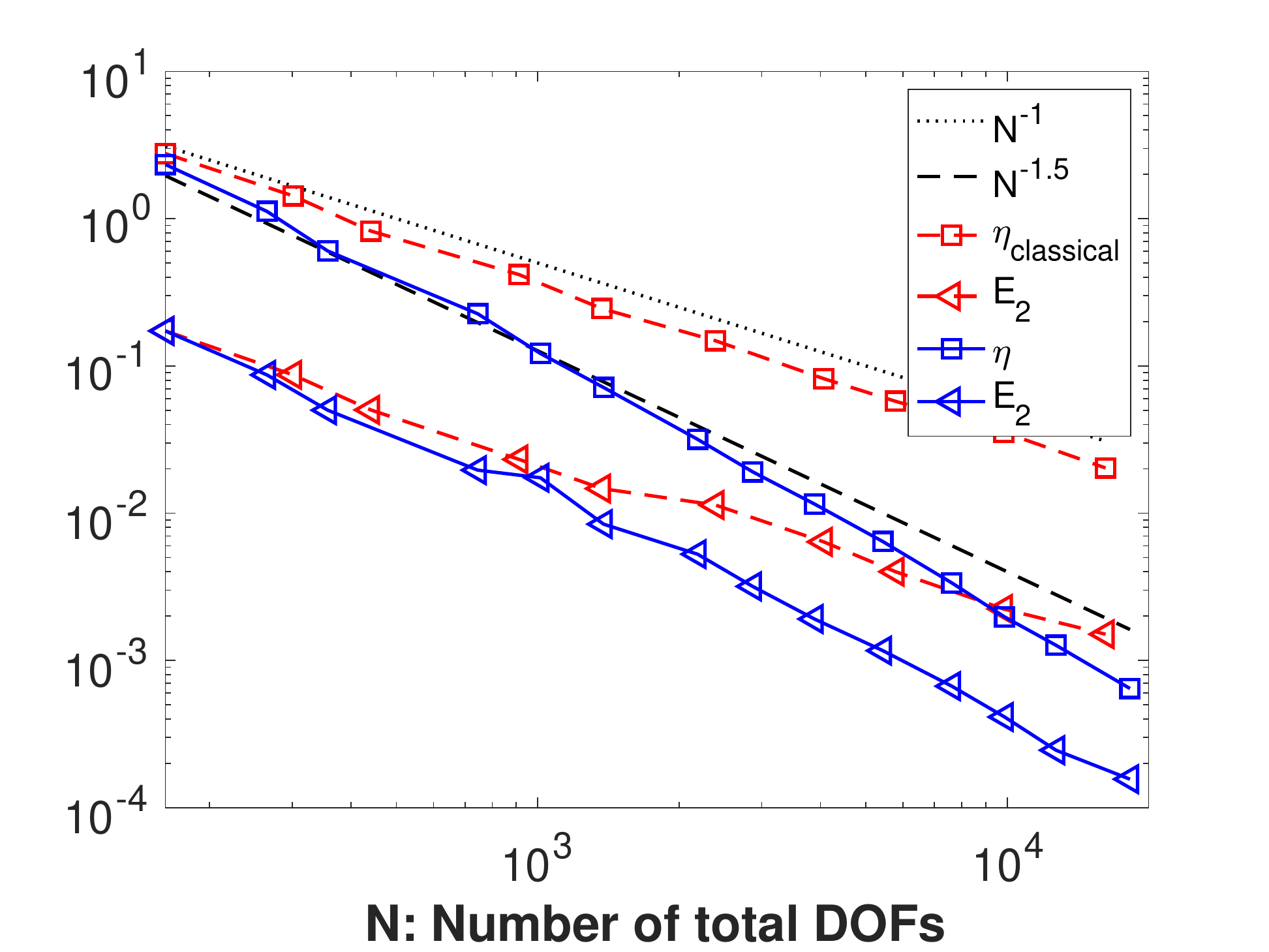}
        \label{Ex1-LM-error-k'=2-a}}                                                        
    \hfill
        \subfloat[]{\includegraphics[width=.30\textwidth]{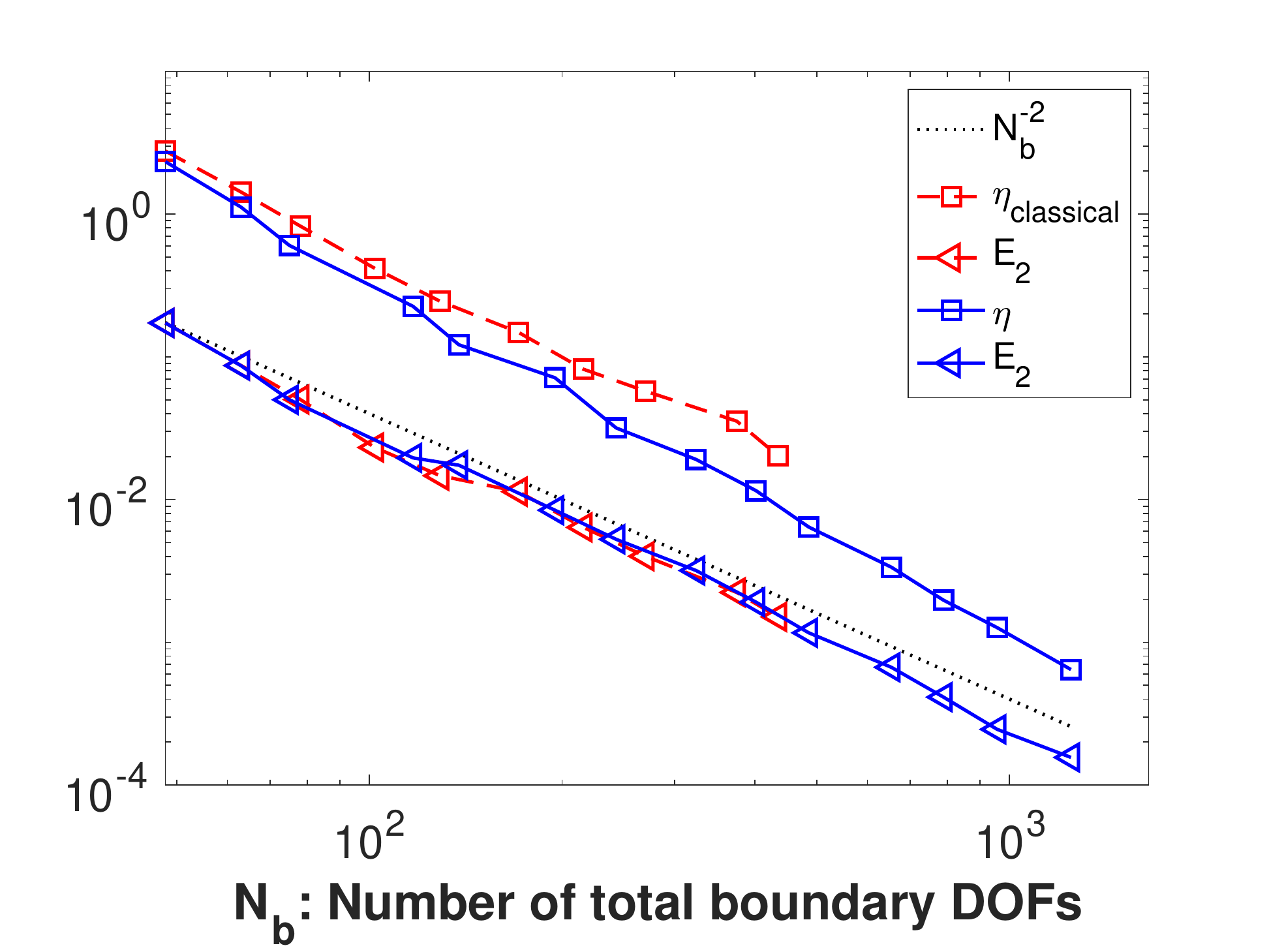}
        \label{Ex1-LM-error-k'=2-b}}
         \hfill
        \subfloat[]{\includegraphics[width=.30\textwidth]{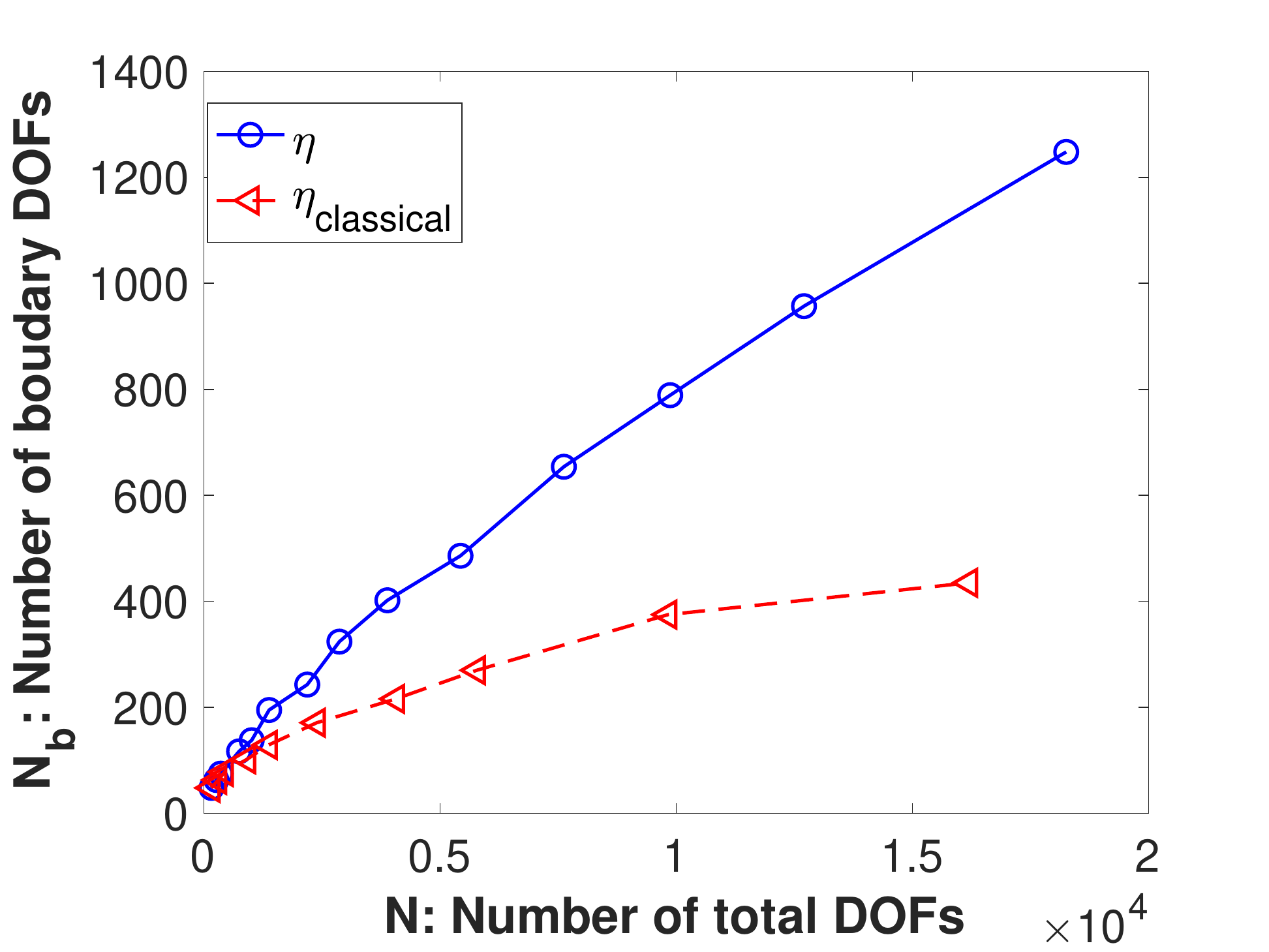}
        \label{Ex1-LM-error-k'=2-c}}
\caption{ \cref{ex1}. Convergence comparison for Lagrangian method ($k=2, k'=2, C_2=1.0$)}
\end{figure}

%To add later 09/17
 \begin{figure}[ht]
\centering
\begin{tabular}{cc}
\includegraphics[width=.30\textwidth]{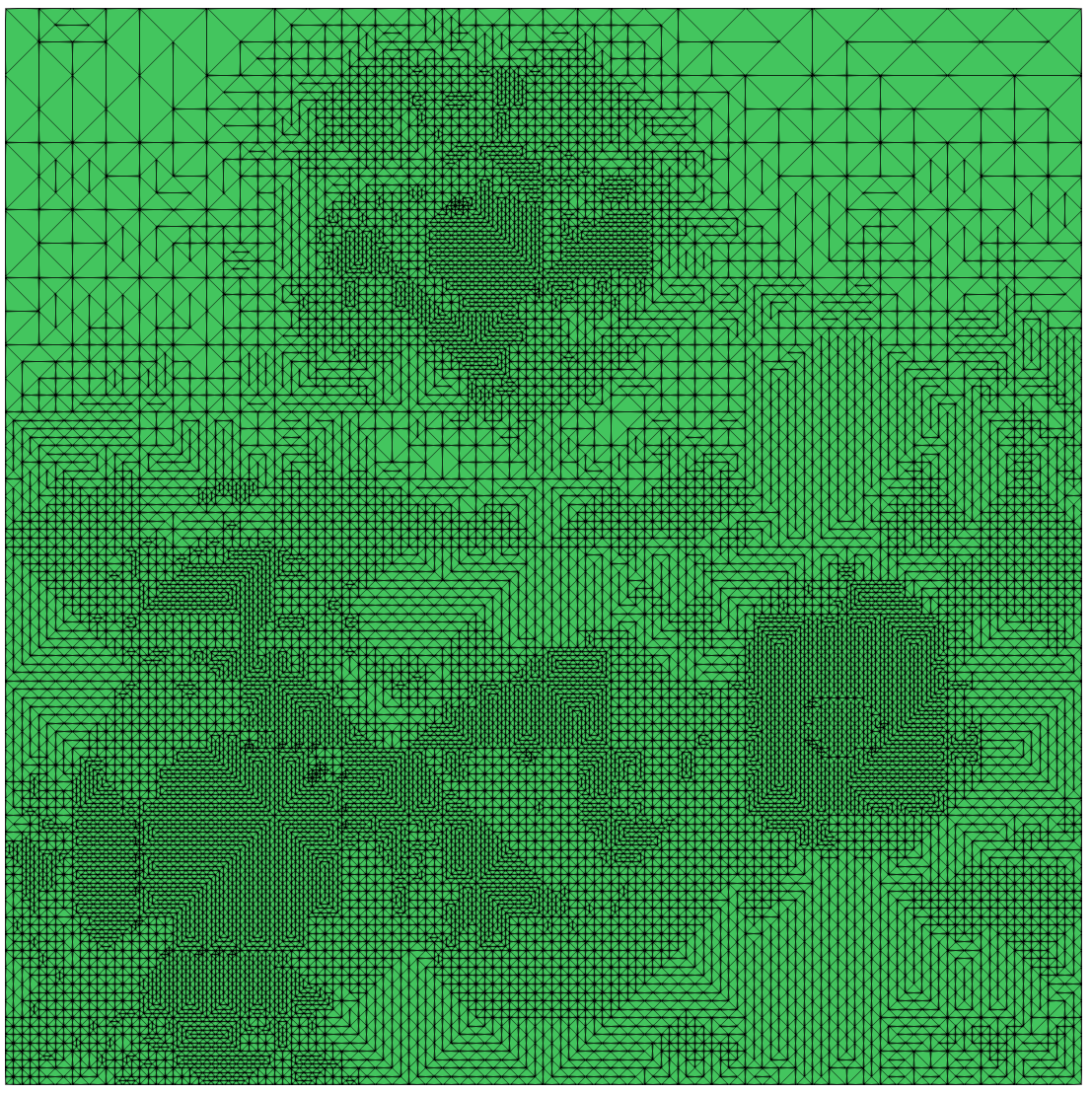} 
  &
 \includegraphics[width=.30\textwidth]{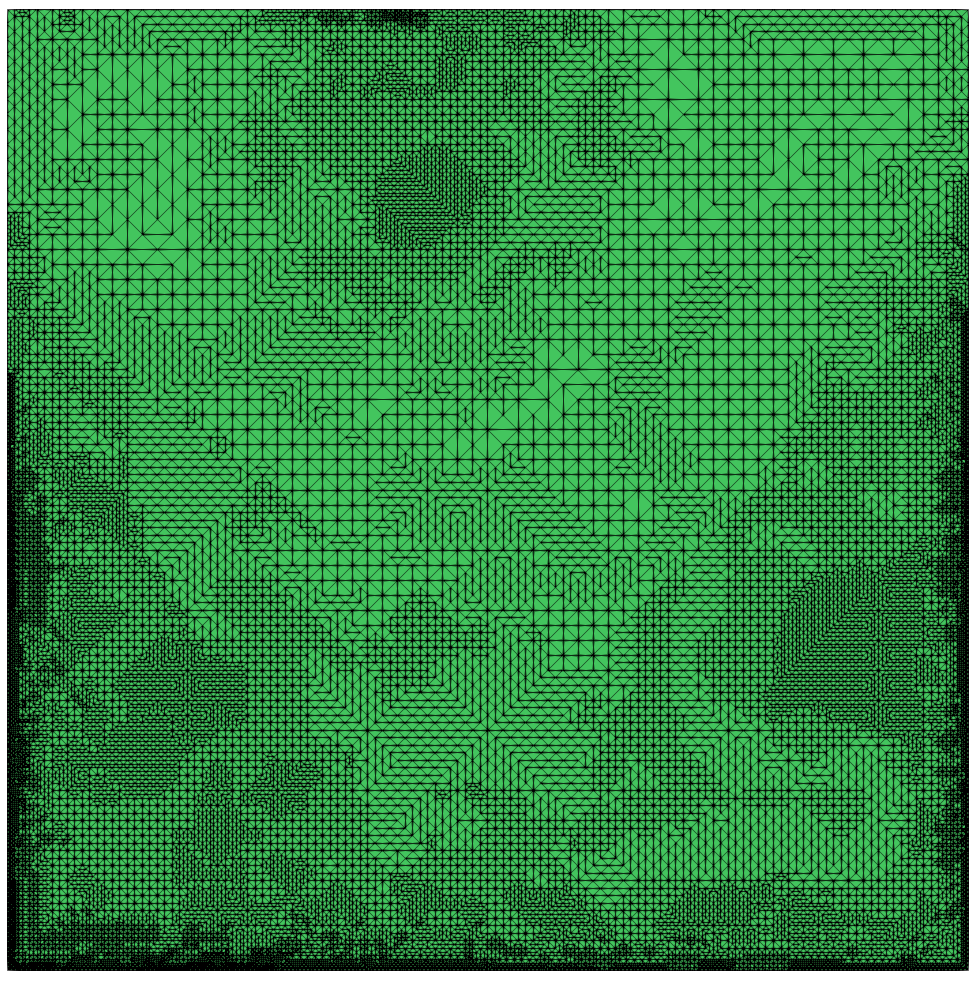}\\
(a) $k=1$ by $\eta_{classical}$ &(b) $k=1, C_2=1.0$ by $\eta$\\ 
\includegraphics[width=.30\textwidth]{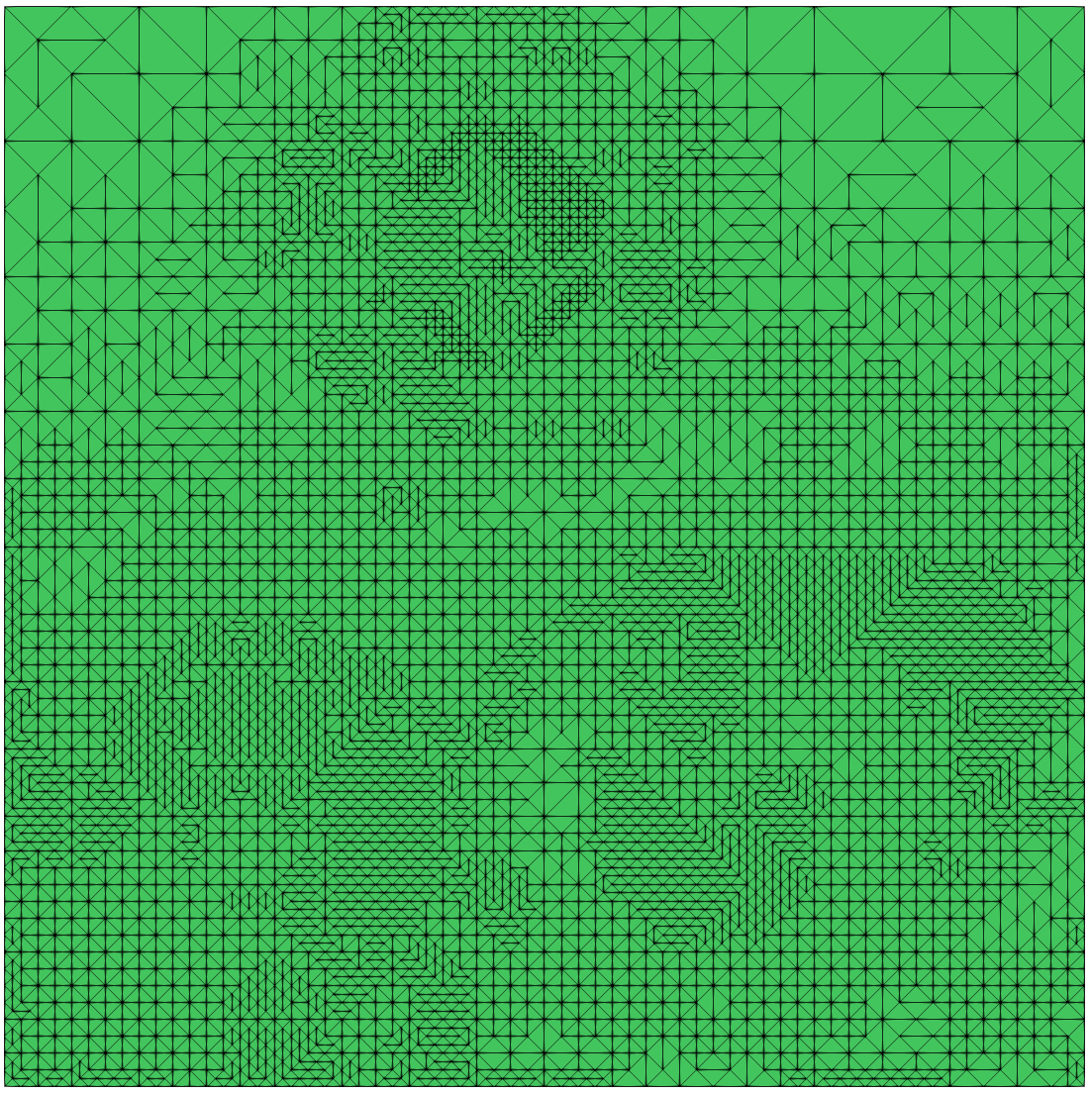} 
  &
\includegraphics[width=.30\textwidth]{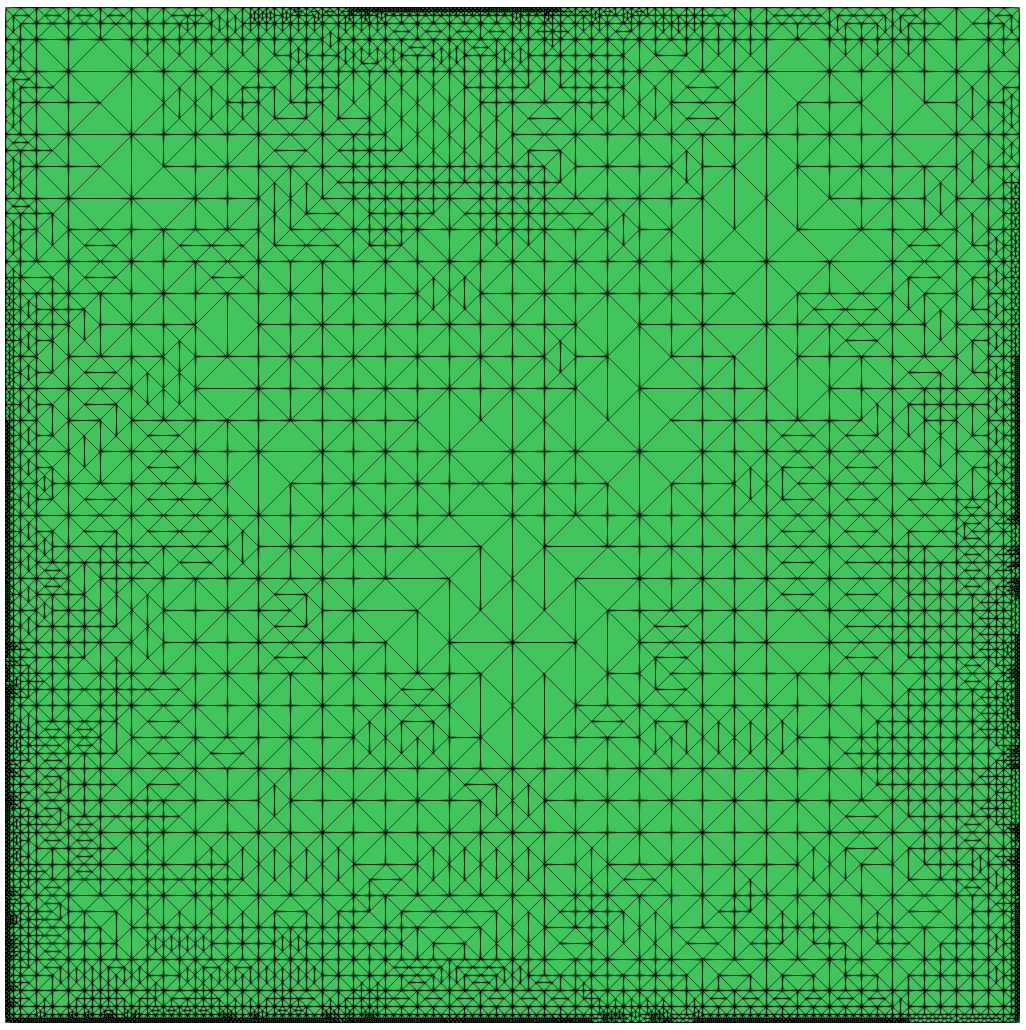}\\
(c)  $k=2$ by $\eta_{classical}$ &(d)  $k=2, C_2=0.1$ by $\eta$
\end{tabular}
\caption{\cref{ex1}. Final meshes for Nitsche's method.}
\label{fig:ex1-Nitsche}
\end{figure}

We now  test the Nitsche's method with $k=1$ and $k=2$
and set $\gamma=10$ in \cref{nitsche}.
\cref{fig:ex1-Nitsche} compares the final meshes generated  using $\eta$ and $\eta_{classical}$. 
We observe similar phenomena to that of the Lagrangian method, i.e., the mesh generated by $\eta_{classical}$ has dense refinement near the interior peaks and sinks while the mesh generated by $\eta$ has dense refinements almost all close to the boundary. The corresponding convergence rates of the true error and error estimators are plotted in 
\cref{Ex1-Nitsche-error}. 
\textcolor{blue}{Again, for both orders, we observe significant improvements of the convergence rate comparing to the classical case. }

{For the Nitsche's method of linear order, we also
  compare the performance with the 
   boundary
  concentrated meshes proposed in \cite{PW19} by Pfefferer and Winkler, which yield what is presently the best a priori error estimate for a non adaptive approximation of the normal flux. The boundary concentrated mesh has a fixed hierarchy structure, i.e., it has uniform mesh size $h^2$ on the boundary and $h\sqrt{\mbox{dist}(T,\Gamma)}$ for interior elements. 
We generate three such meshes in \cref{Ex1-Nitsche-PJ}.}

%To add later 09/17
\begin{figure}[ht]
\centering
\begin{tabular}{ccc}
\includegraphics[trim=12cm 3cm 8cm 3cm,clip=true,width=.30\textwidth]{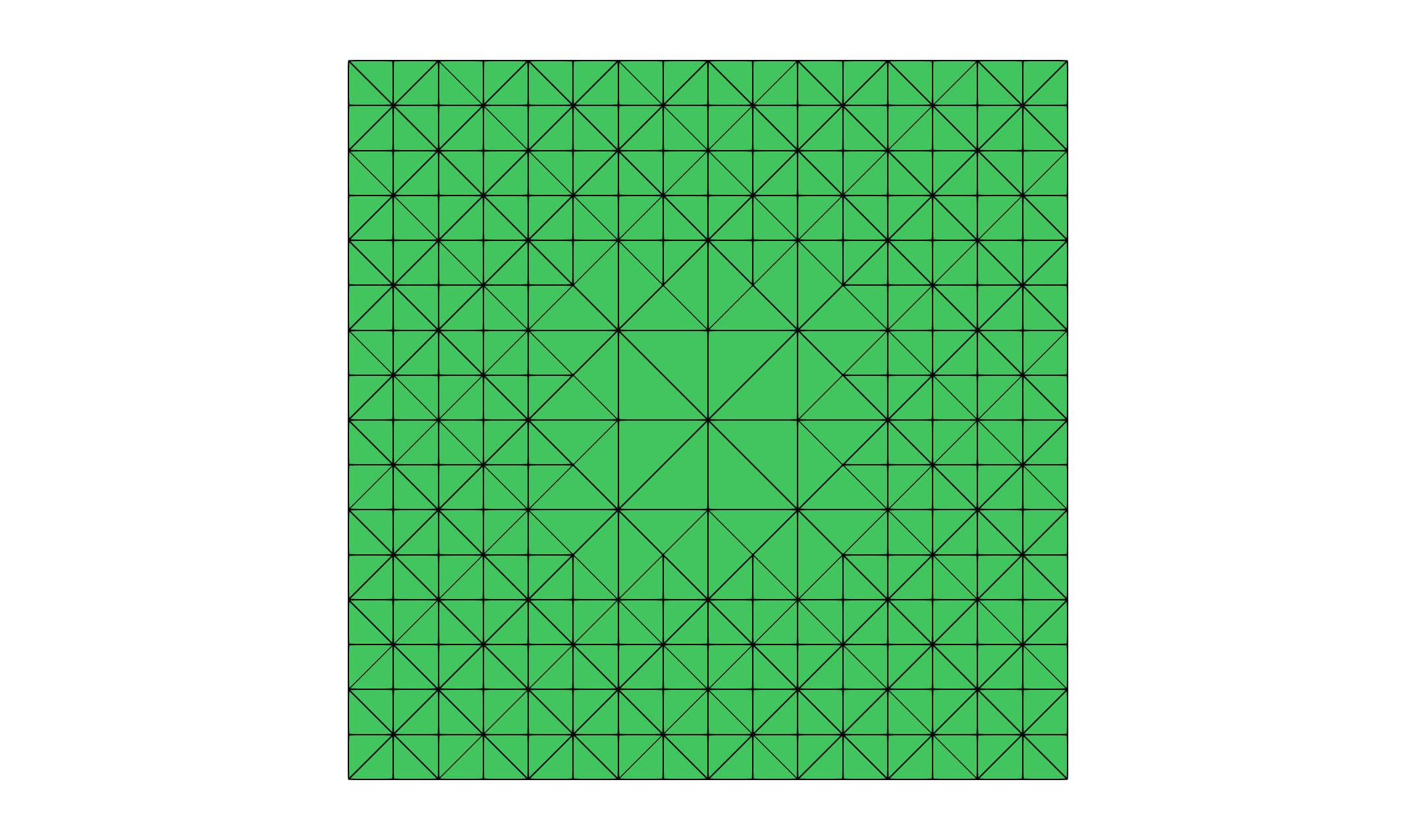} 
&
\includegraphics[trim=12cm 3cm 8cm 3cm,clip=true,width=.30\textwidth]{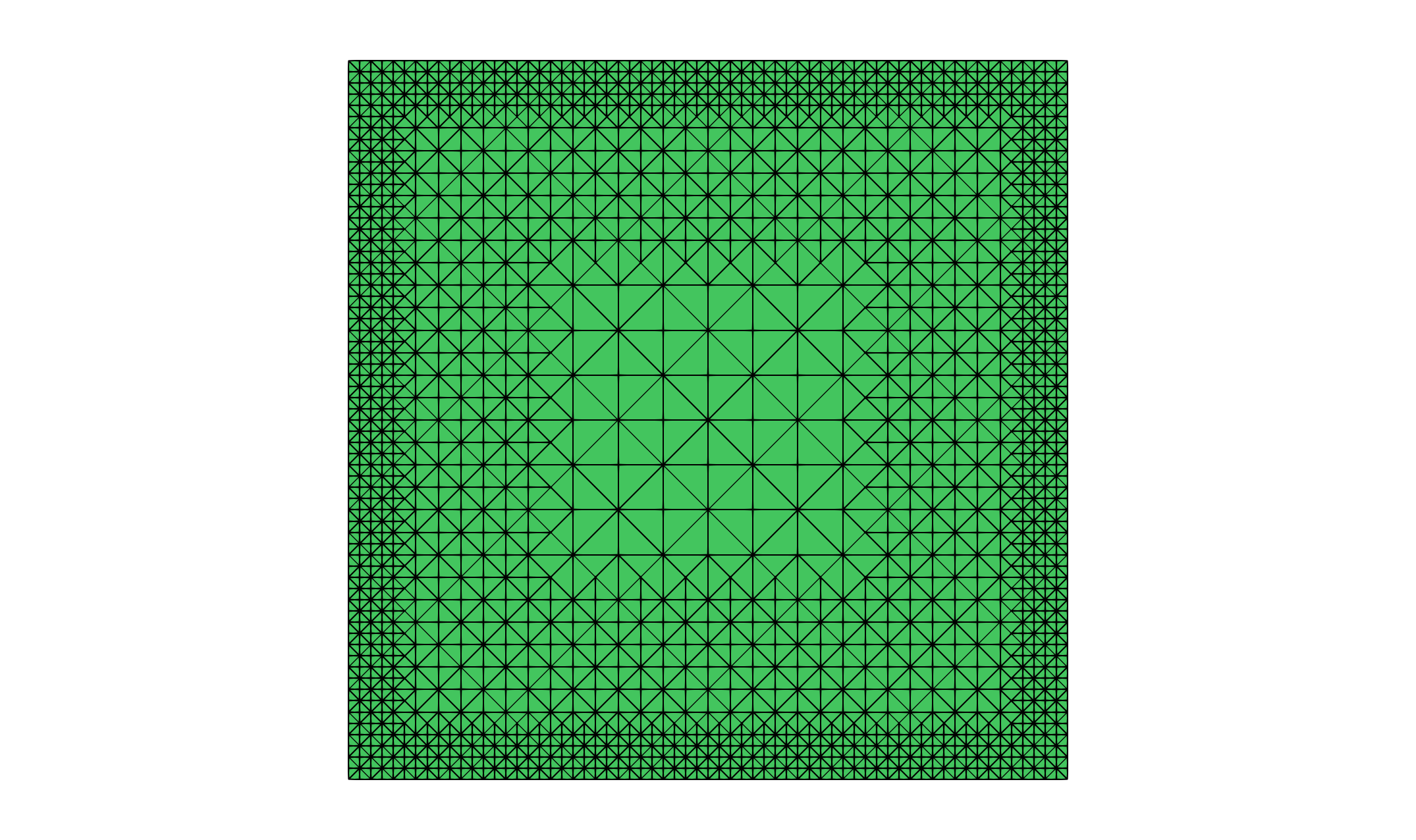}
&
\includegraphics[trim=12cm 3cm 8cm 3cm,clip=true,width=.30\textwidth]{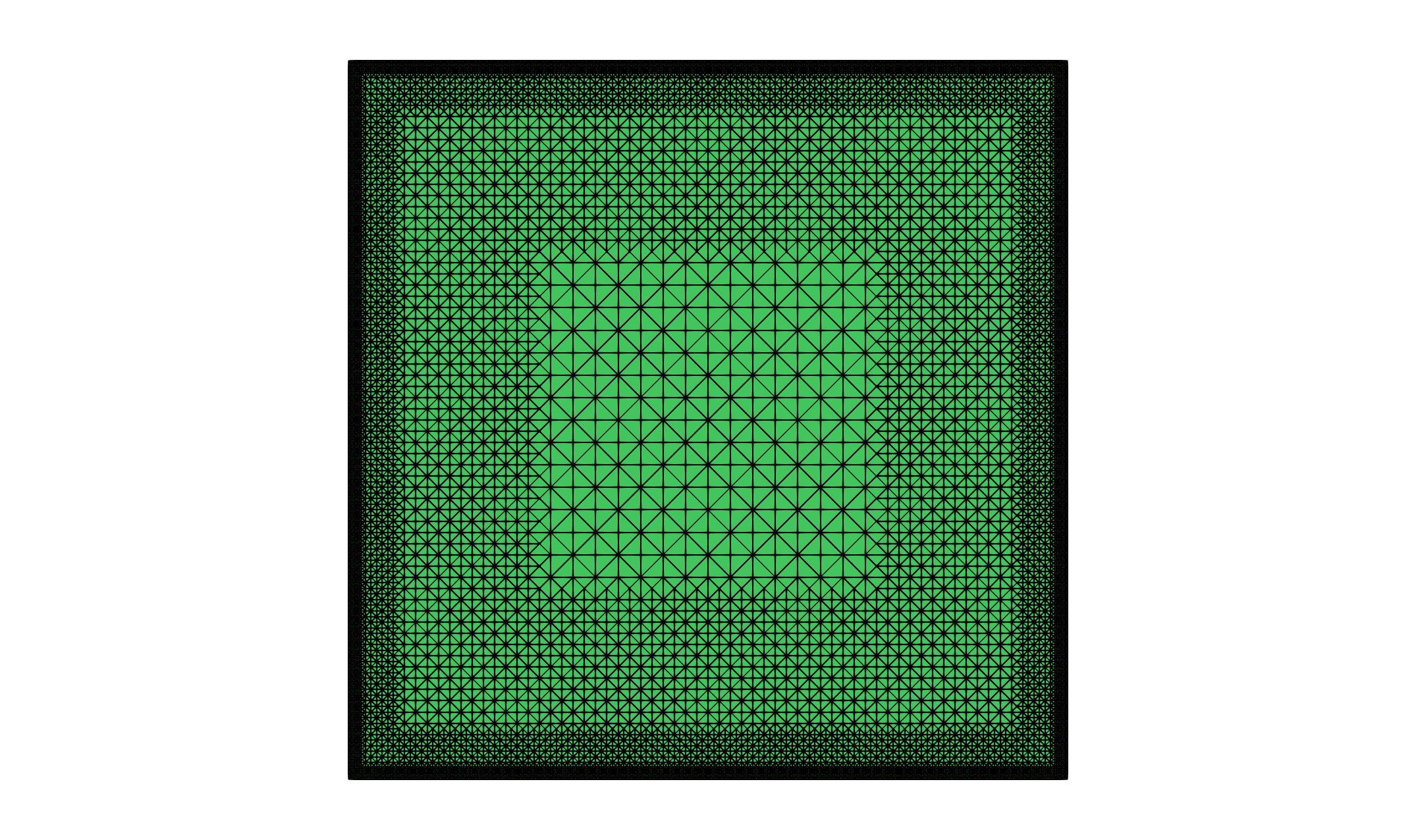}\\
\end{tabular}
\caption{\cref{ex1}. Three meshes for the PW method} %using $\eta_{classical}$ and $\eta$
 \label{Ex1-Nitsche-PJ}
\end{figure}

{
The corresponding log-log curve of $(N, E_2)$ is plotted with legend $E_{2, PW}$ in \cref{Ex1-Nitsche-error} (see the top left figure). 
We observe that the error obtained with this mesh, which is adapted ``a priori'' to a good approximation of the boundary flux, is very close to the error obtained thanks to our error estimator $\eta$. This is not surprising, as the solution of the problem is smooth.  
}

%To add later 09/17
 \begin{figure}[ht]
\centering
\begin{tabular}{ccc}
\includegraphics[width=0.30\textwidth]{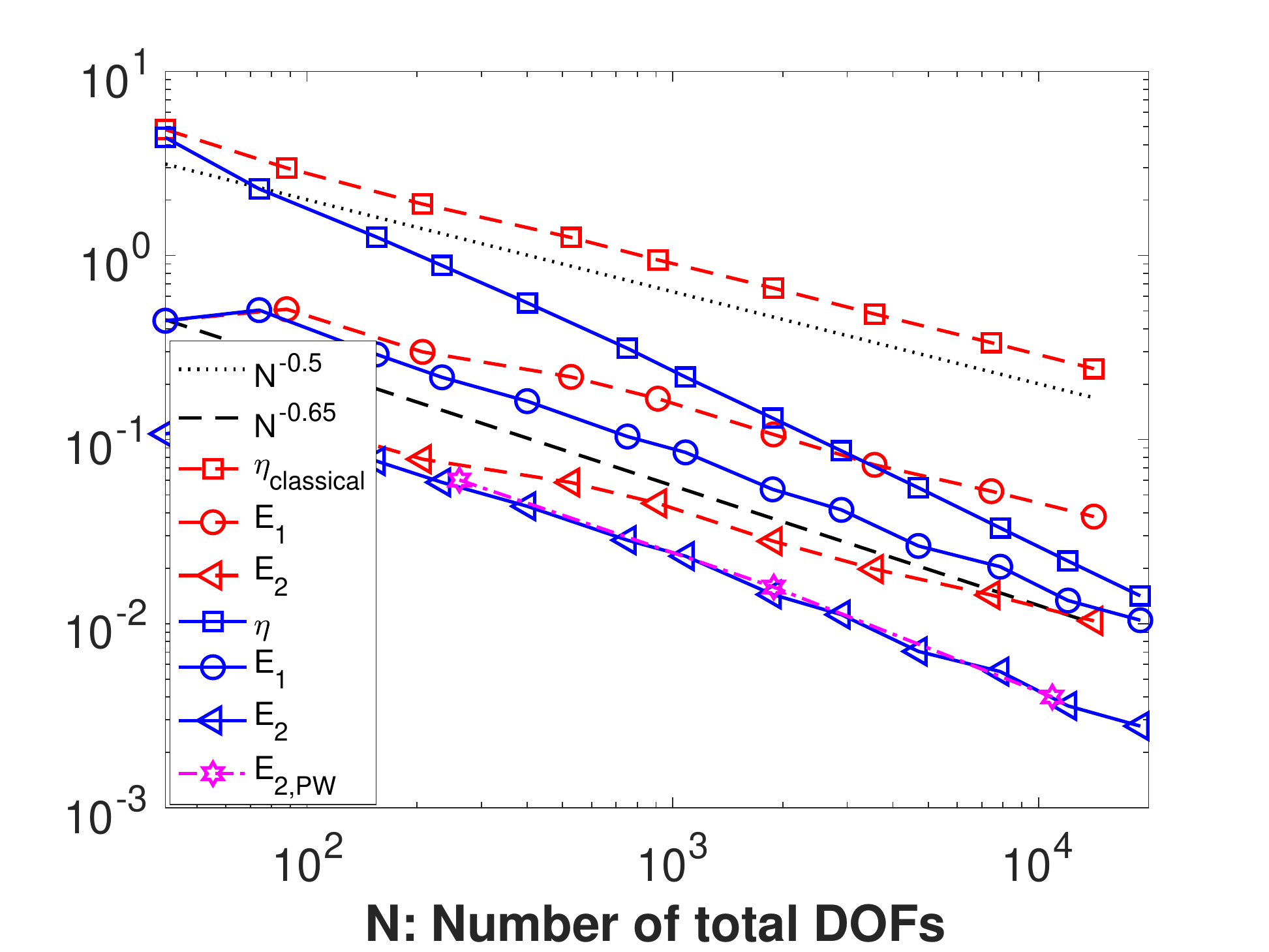}
&\includegraphics[width=0.30\textwidth]{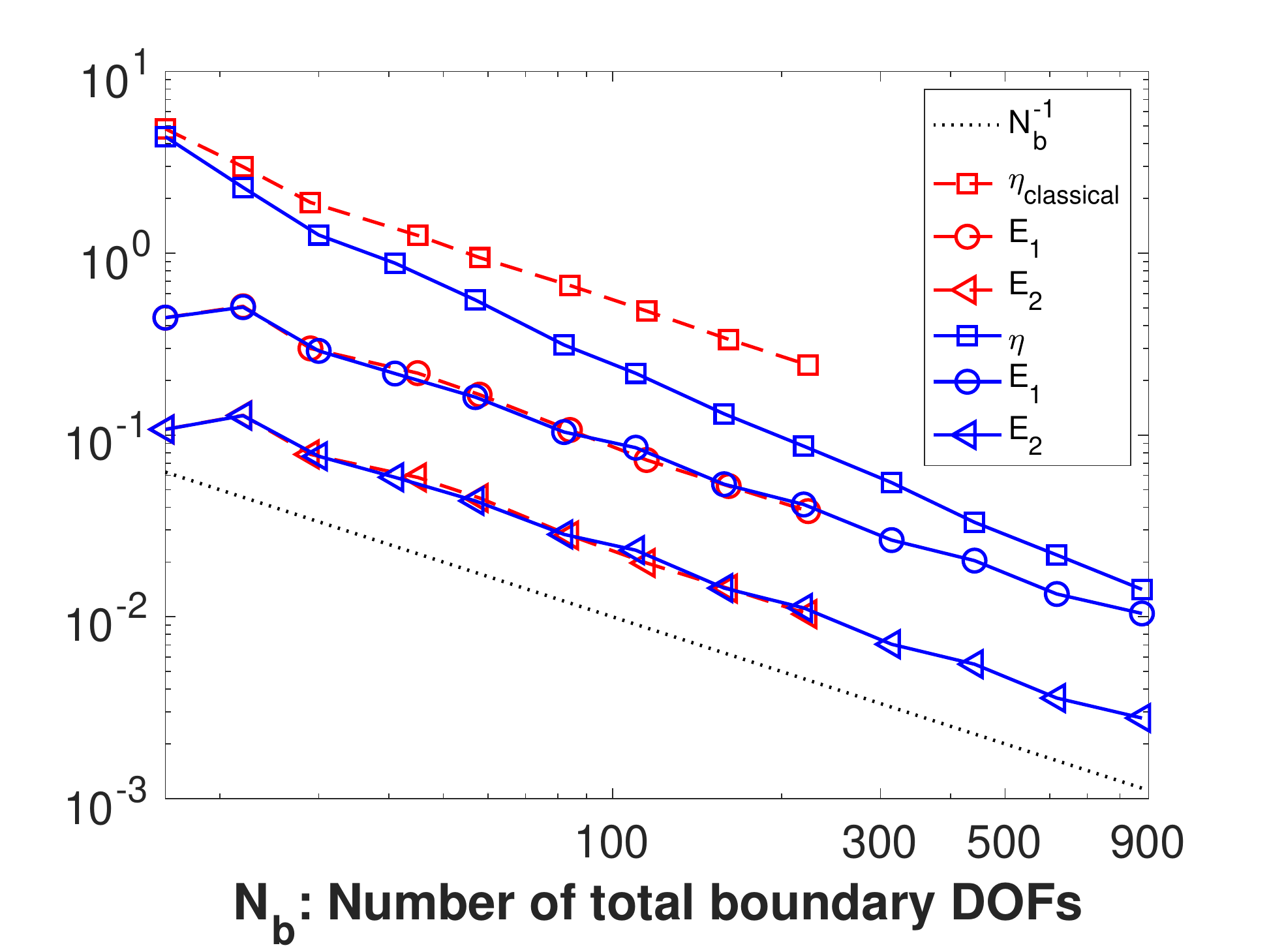}
&\includegraphics[width=0.30\textwidth]{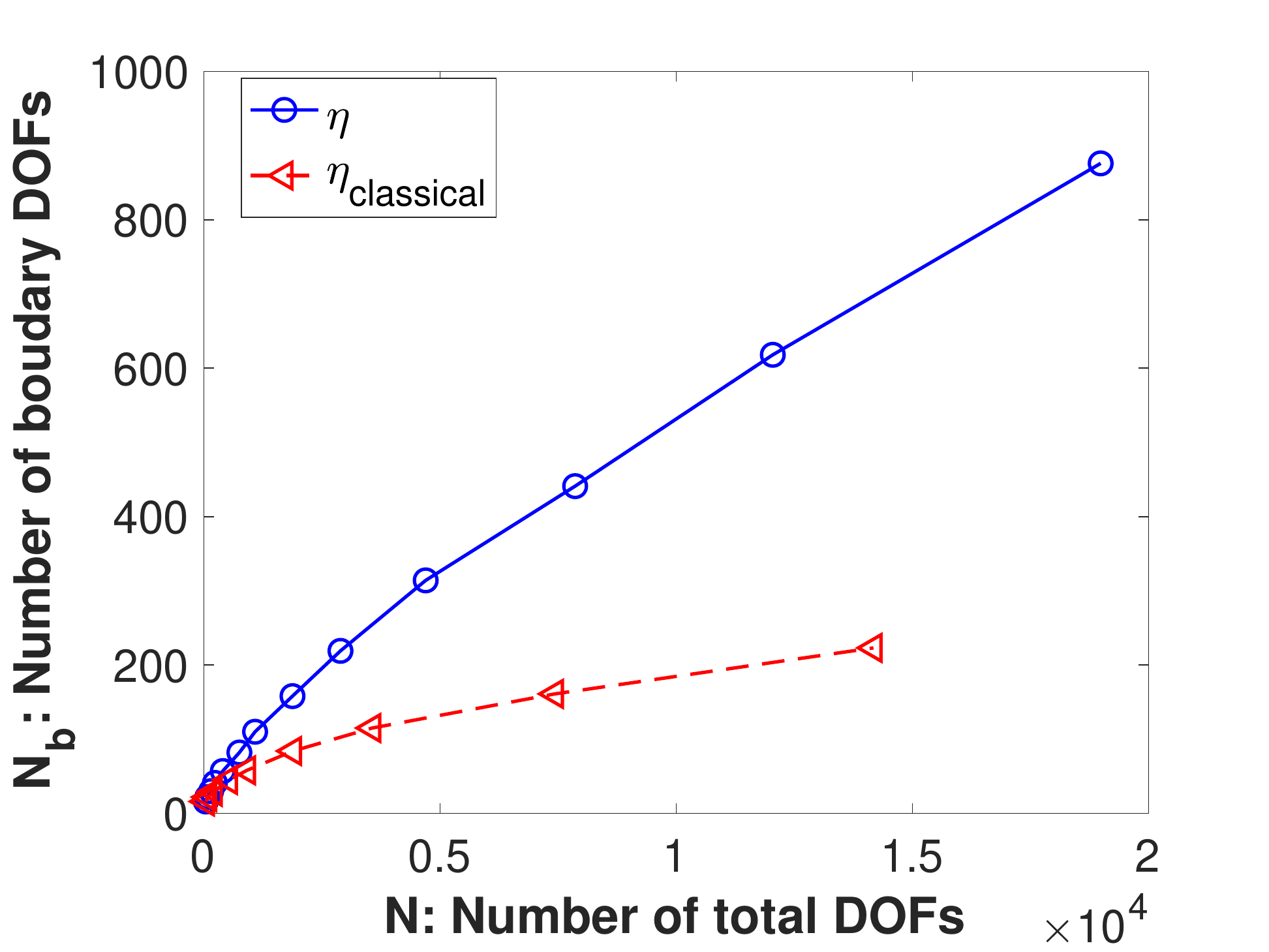}\\
& $k=1,C_2=1.0 $&  \\
  \includegraphics[width=0.30\textwidth]{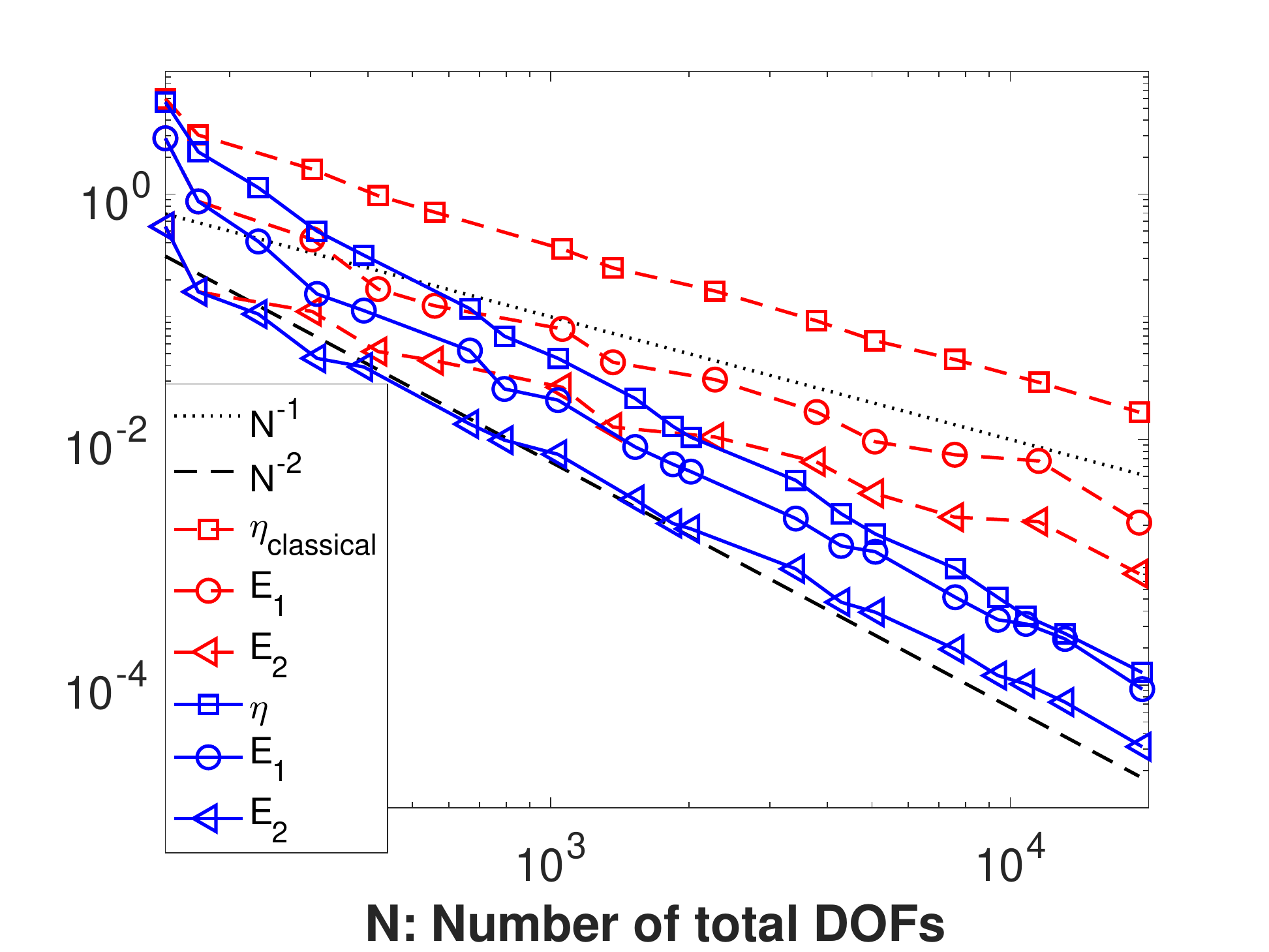}
&\includegraphics[width=0.30\textwidth]{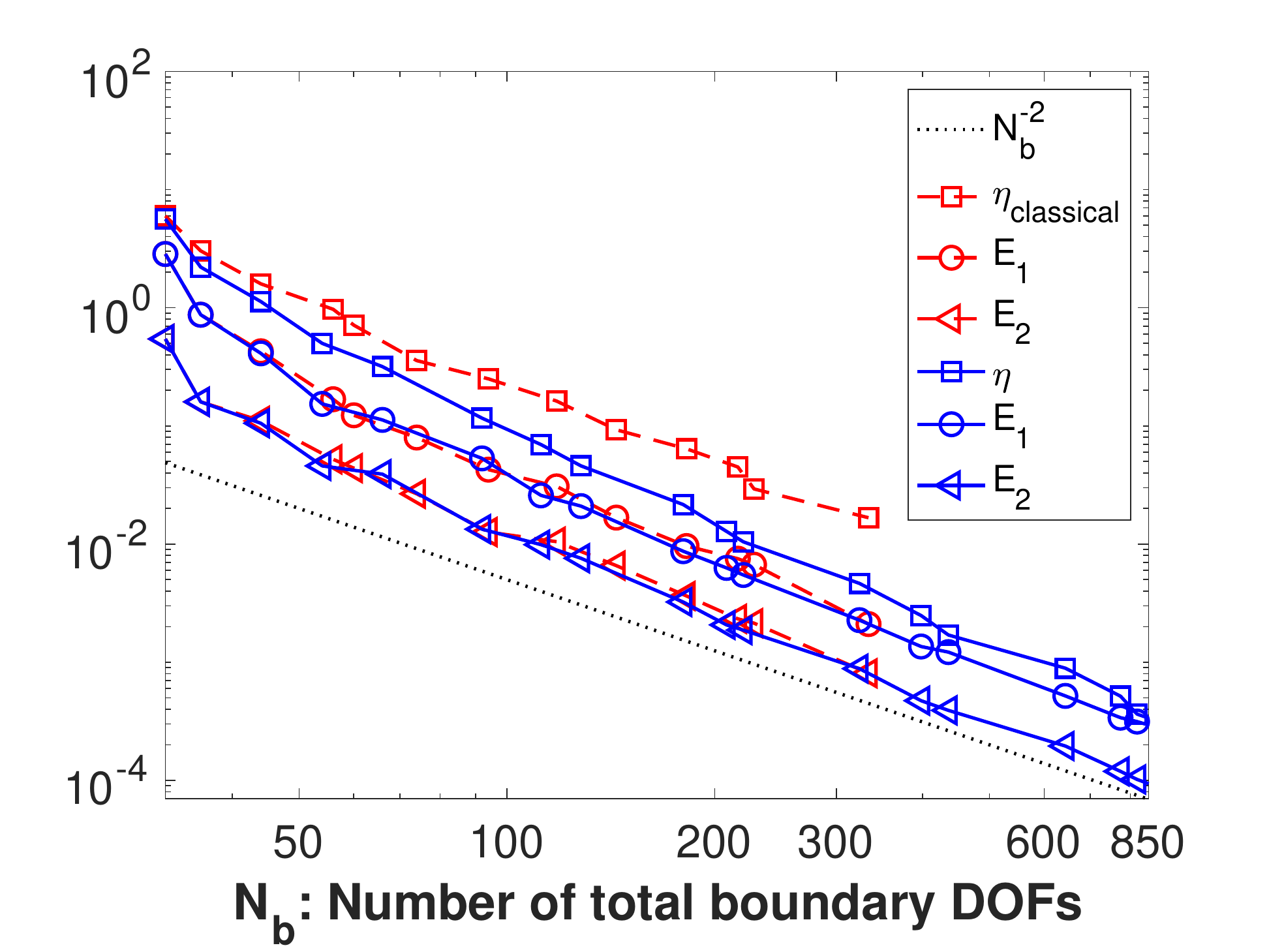}
&\includegraphics[width=0.30\textwidth]{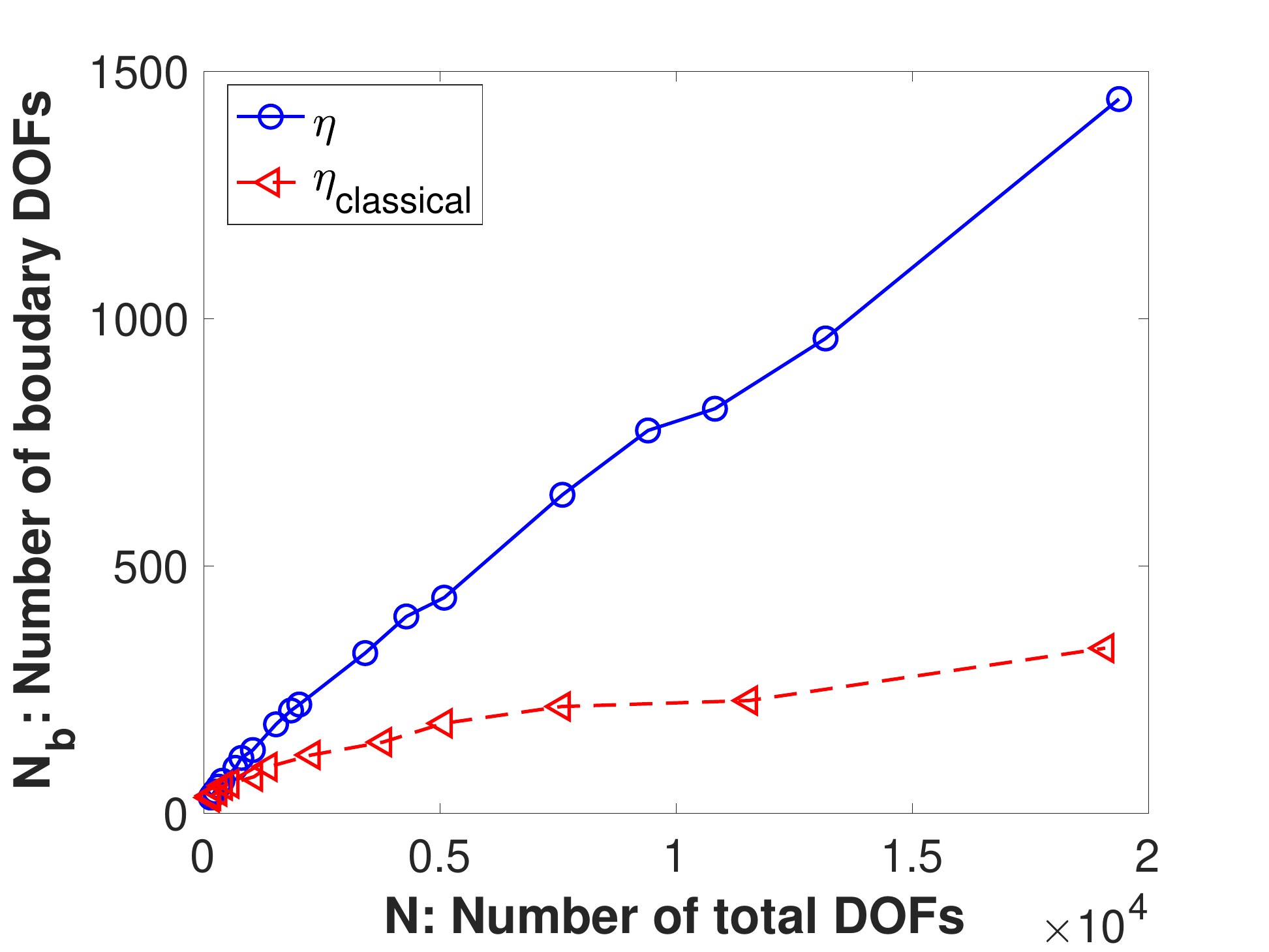}\\
& $k=2, C_2=0.1$& \\
\end{tabular}
\caption{\cref{ex1}.  Convergence comparison for Nitsche's method}
\label{Ex1-Nitsche-error}
\end{figure}

 %To add later 09/17
 \begin{figure}[ht]
\centering
\begin{tabular}{cc}
\includegraphics[width=0.35\textwidth]{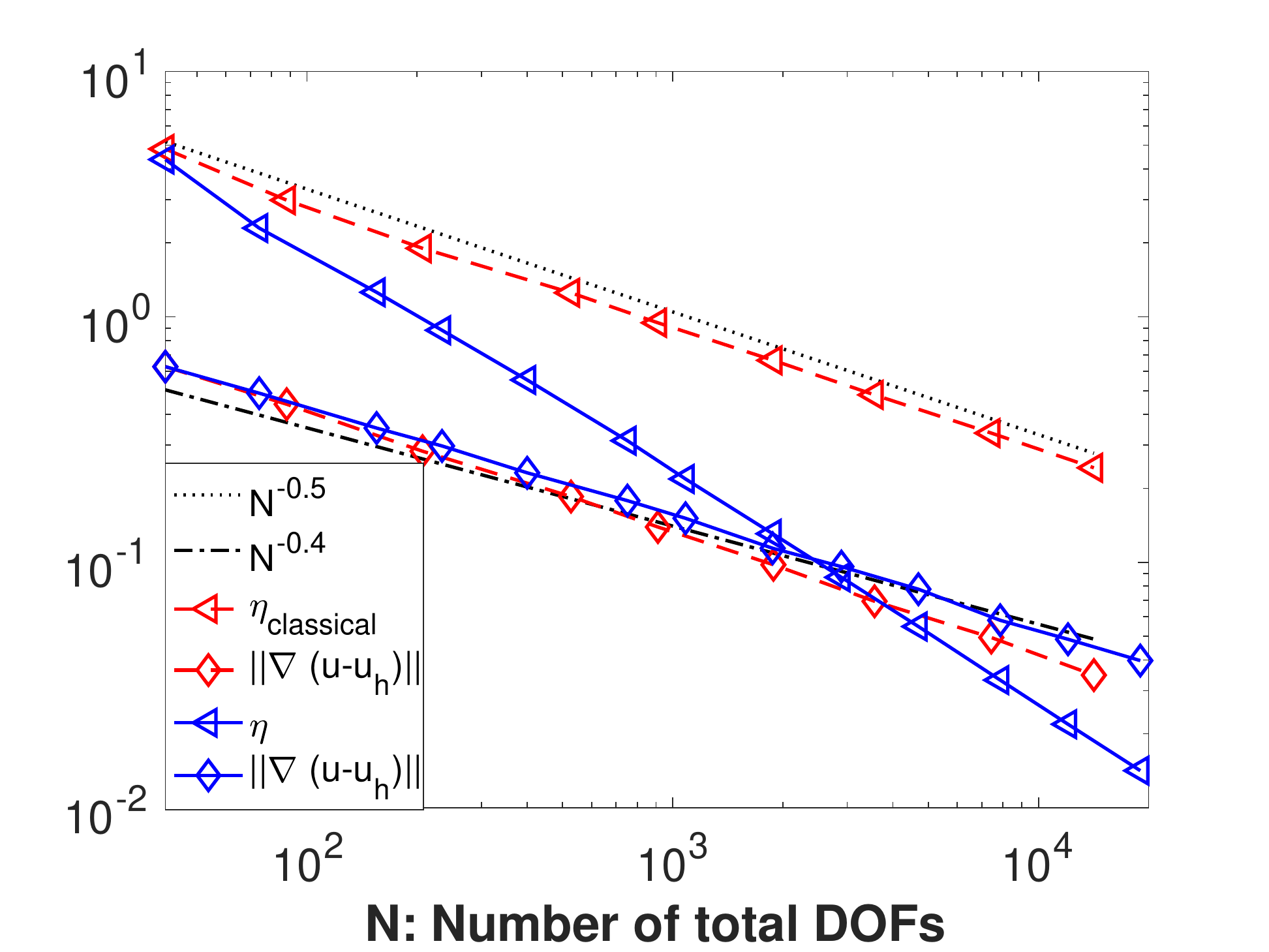}
&\includegraphics[width=0.35\textwidth]{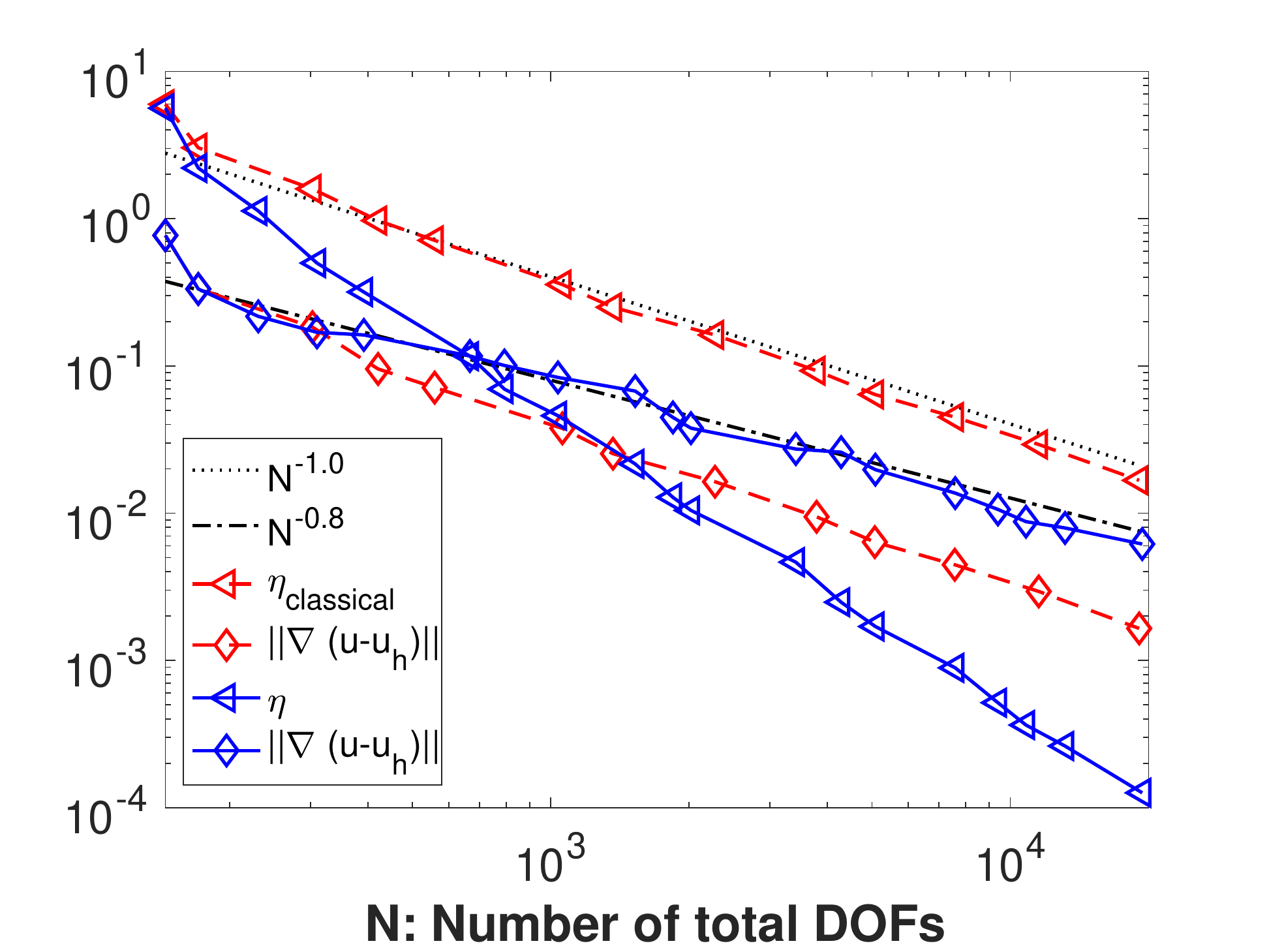}\\
(a)  Nitsche  $k=1$ &(b) Nitsche  $k=2$ \\
\includegraphics[width=0.35\textwidth]{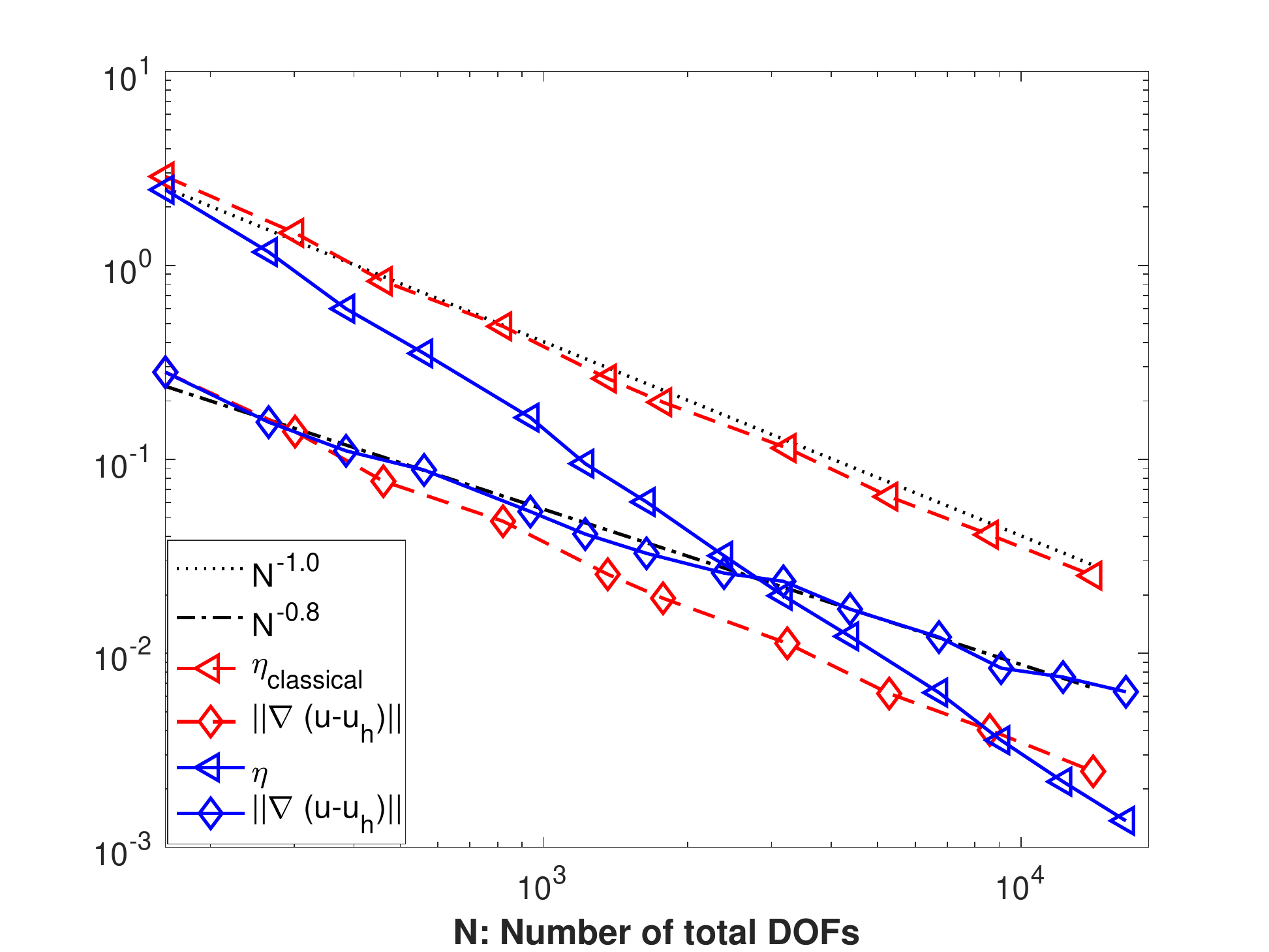}
&\includegraphics[width=0.35\textwidth]{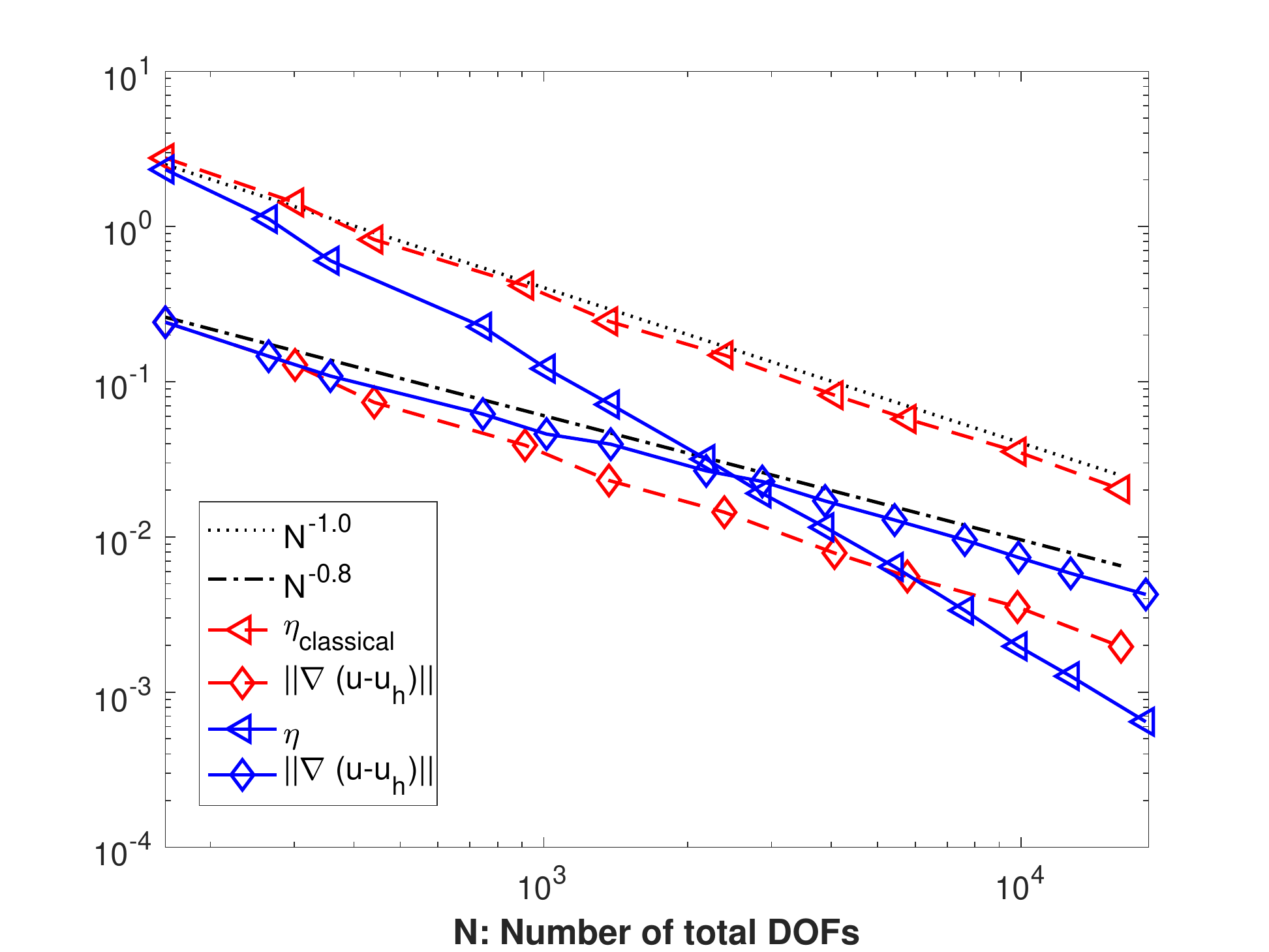}\\
(c) Lagrangian $k=2,k'=0$&(d) Lagrangian $k=2,k'=2$ \\
\end{tabular}
\caption{\cref{ex1}. Convergence comparison with energy error $\|\nabla (u - u_h)\|_{0,\O}$}
\label{fig:energy-error}
\end{figure}

\textcolor{blue}{To provide a more complete picture, in \cref{fig:energy-error}, we instead compare the performance of the error estimators $\eta_{classical}$ and $\eta$, and of the related adaptive mesh refinements, in terms of the convergences in the energy error $\|\nabla(u-u_h)\|_{0,\Omega}$. The results  confirm that $\eta_{classical}$ is optimal for the energy error, while, as it is to be expected, $\eta$ yields only a sub-optimal rate for the energy error in each of the tests.}

\textcolor{blue}{Remark that, despite the fact that both versions of the Lagrangian method that we tested are, for different reasons, suboptimal with respect to the order $k$ of the bulk discretization, our tests show that the proposed error estimator allows to obtain a satisfactory approximation of the normal flux also for such methods.}

\textcolor{blue}{For the remaining examples, to avoid a too large number of redundant tests, we then focus only on Nitsche's method, which is, instead, optimal and which, we recall, is equivalent to the Barbosa-Hughes method. 
Moreover, we observe that he results displayed before in \cref{tab:ex1-Nitsche-k=1} and \cref{Ex1-Nitsche-error} for Nitsche's method both confirm that $E_2$ can, after rescaling, serve as a good alternative to the {more expensive} $E_1$  in evaluating the true error. As for  Nitsche's method with $k=1$, the ratio $E_2/E_1$ is  stable around $0.25$, in the remaining examples, we will {use $E = 4E_2$} as the true error.}
 
\begin{example}\label{ex2}
\textcolor{blue}{
In this example, we test a diffusion problem with variable diffusion coefficient. 
The diffusion coefficient is defined as $a = 1.0+ \sin^2\left(\pi \sqrt{x^2+y^2}\right)$. And the functions $g$ and $f$ are defined such that
the true solution $u$ has the following representation:
\[
u(x,y) = \exp( - \alpha_p ((x - x_p)^2 + (y- y_p)^2)) \mbox{ with }\alpha_p =200, x_p = 0.2, y_p = 0.2.
\]
 Note that this function has a strong peak at the point $(x_p, y_p)$.}

\end{example}
\textcolor{blue}{In the adaptive procedure, the stopping criteria is again set  such that the total number of DOFs is less than $20,000$. 
 We test the Nitsche's method for both the first and second orders with $C_2 = 1.0$. 
For \cref{ex2} with variable coefficient, we observe similar numerical behavior as in \cref{ex1}, see \cref{fig:ex2-Nitsche}--\cref{Ex2-Nitsche-error}. From the left two sub-figures of \cref{Ex2-Nitsche-error}, we observe that in both cases the convergence rates for the true error using $\eta$ is almost double than that using $\eta_{classical}$. In the example, our adaptive algorithm slightly outperforms the PW method. In the case $k=2$ we however observe visible oscillations for the true error. This is not in contrast with the theory. Indeed, the Galerkin method minimizes a discrete energy norm of the error which controls the error on the normal flux only up to a constant. Therefore, refining the mesh does not automatically yield a reduction in the error on the normal flux, particularly if measured, as in our case, in a norm that does not depend on the diffusion coefficient $a$.
}

%To add later 09/17
\begin{figure}[ht]
\centering
\begin{tabular}{cc}
\includegraphics[width=.3\textwidth]{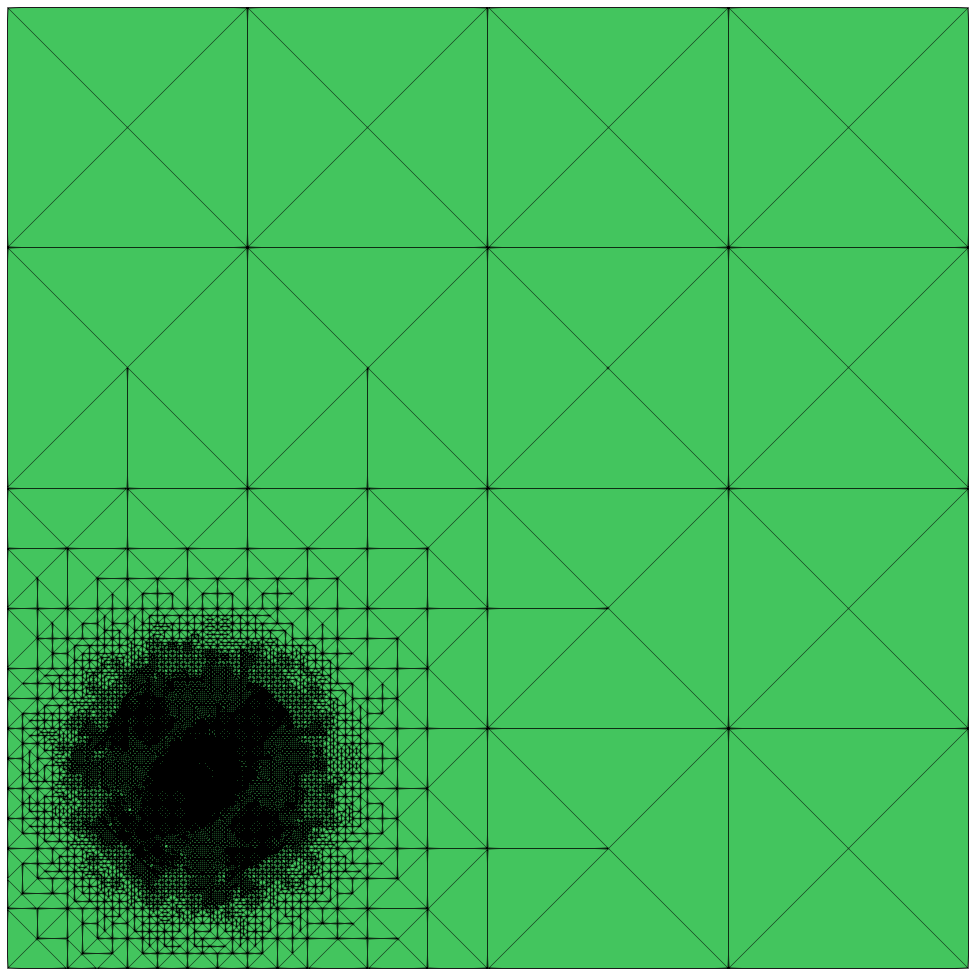} 
  &
 \includegraphics[width=.3\textwidth]{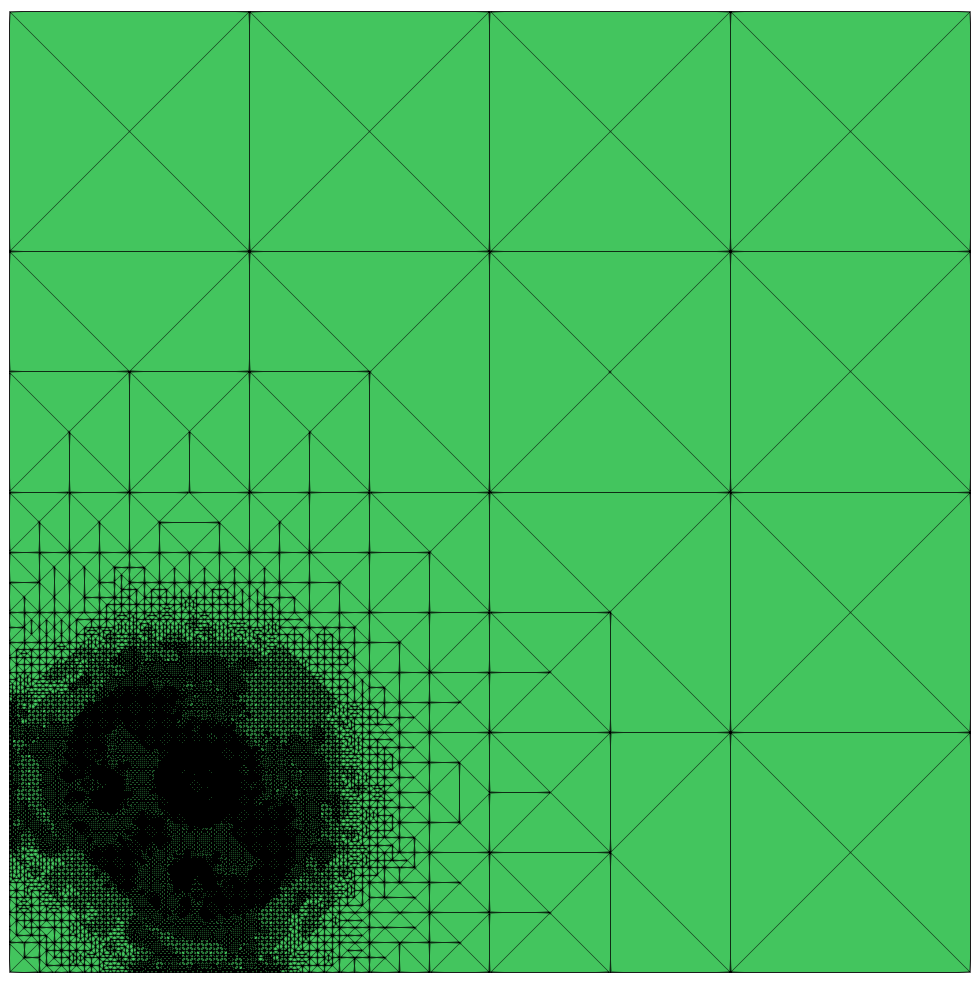}\\
(a) $k=1, \eta_{classical}$ &(b) $k=1, \eta$\\ 
\includegraphics[width=.3\textwidth]{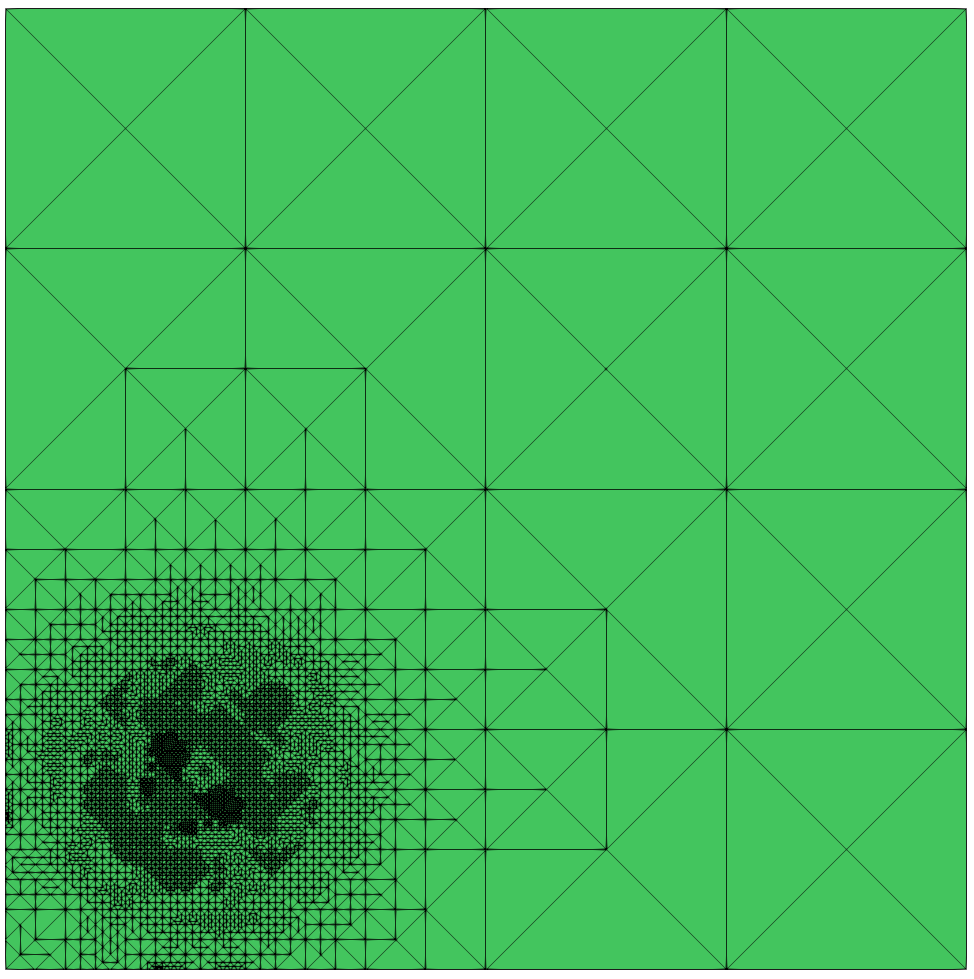} 
  &
\includegraphics[width=.3\textwidth]{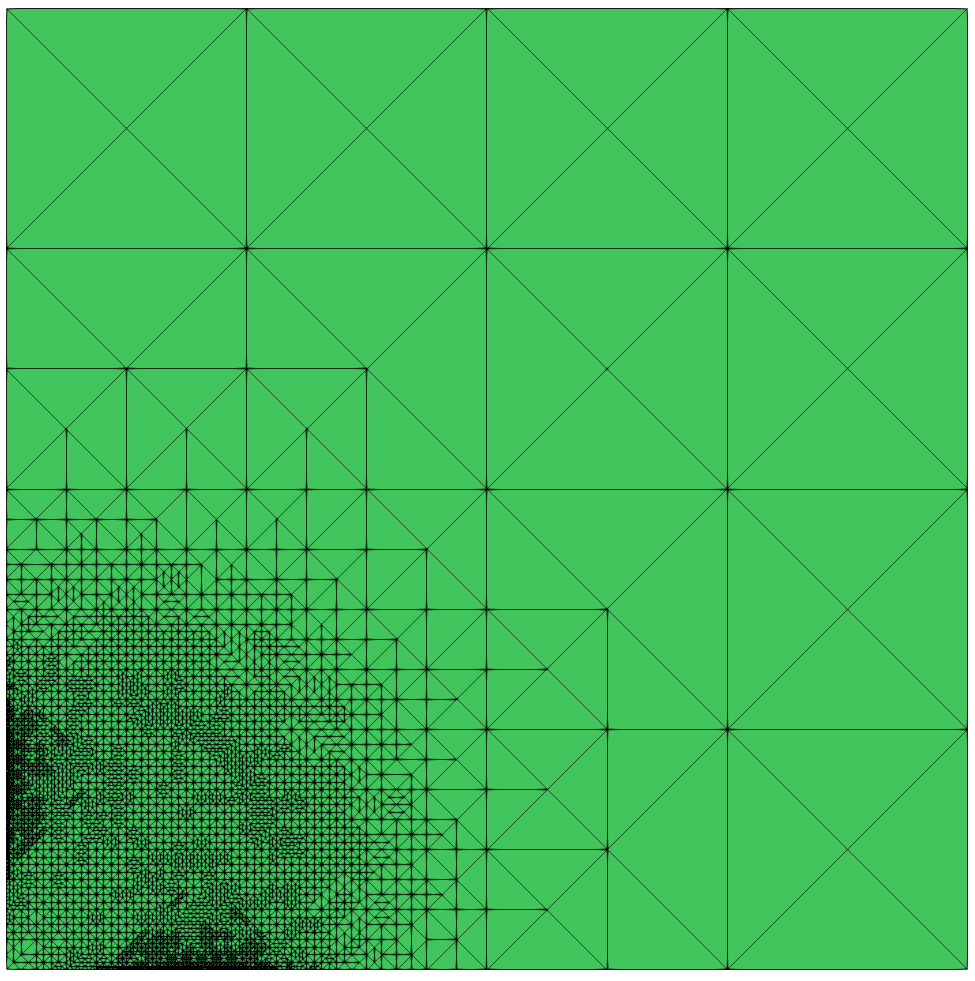}\\
(c)  $k=2, \eta_{classical}$ &(d)  $k=2, \eta$
\end{tabular}
\caption{\cref{ex2}. Final meshes for Nitsche's method ($C_2=1.0$).}
\label{fig:ex2-Nitsche}
\end{figure}

%To add later 09/17
 \begin{figure}[ht]
\centering
\begin{tabular}{ccc}
\includegraphics[width=0.30\textwidth]{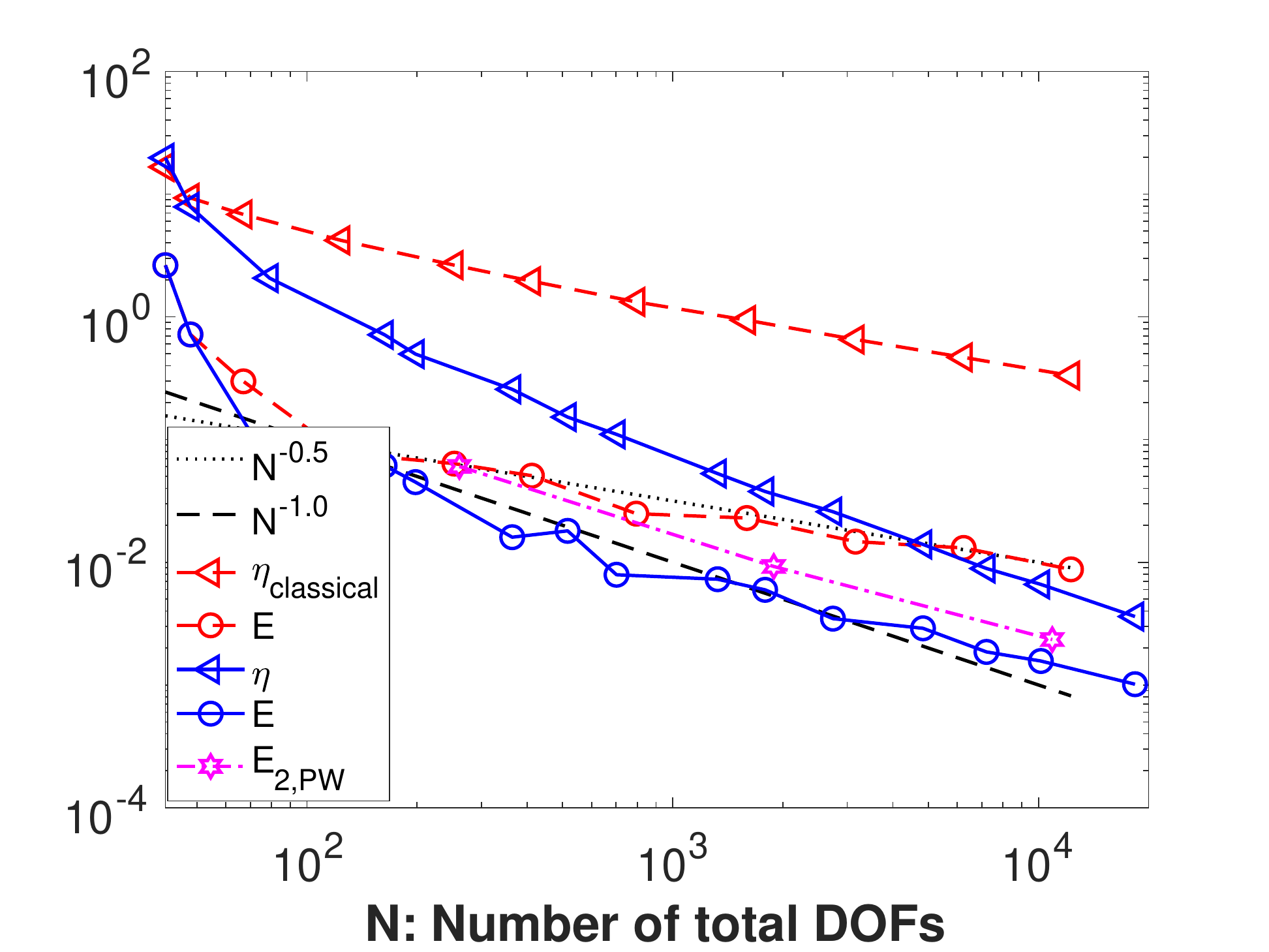}
&\includegraphics[width=0.30\textwidth]{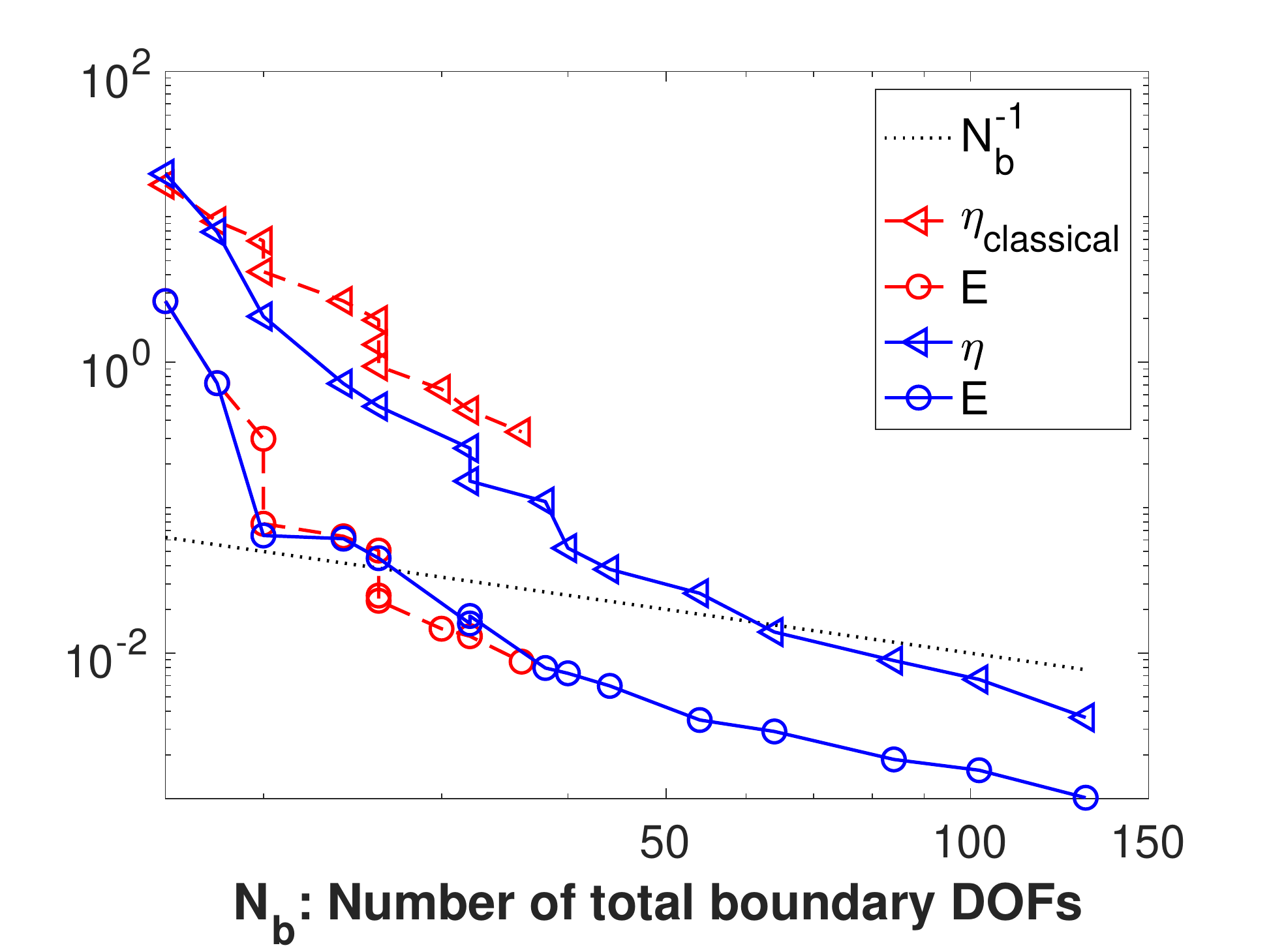}
&\includegraphics[width=0.30\textwidth]{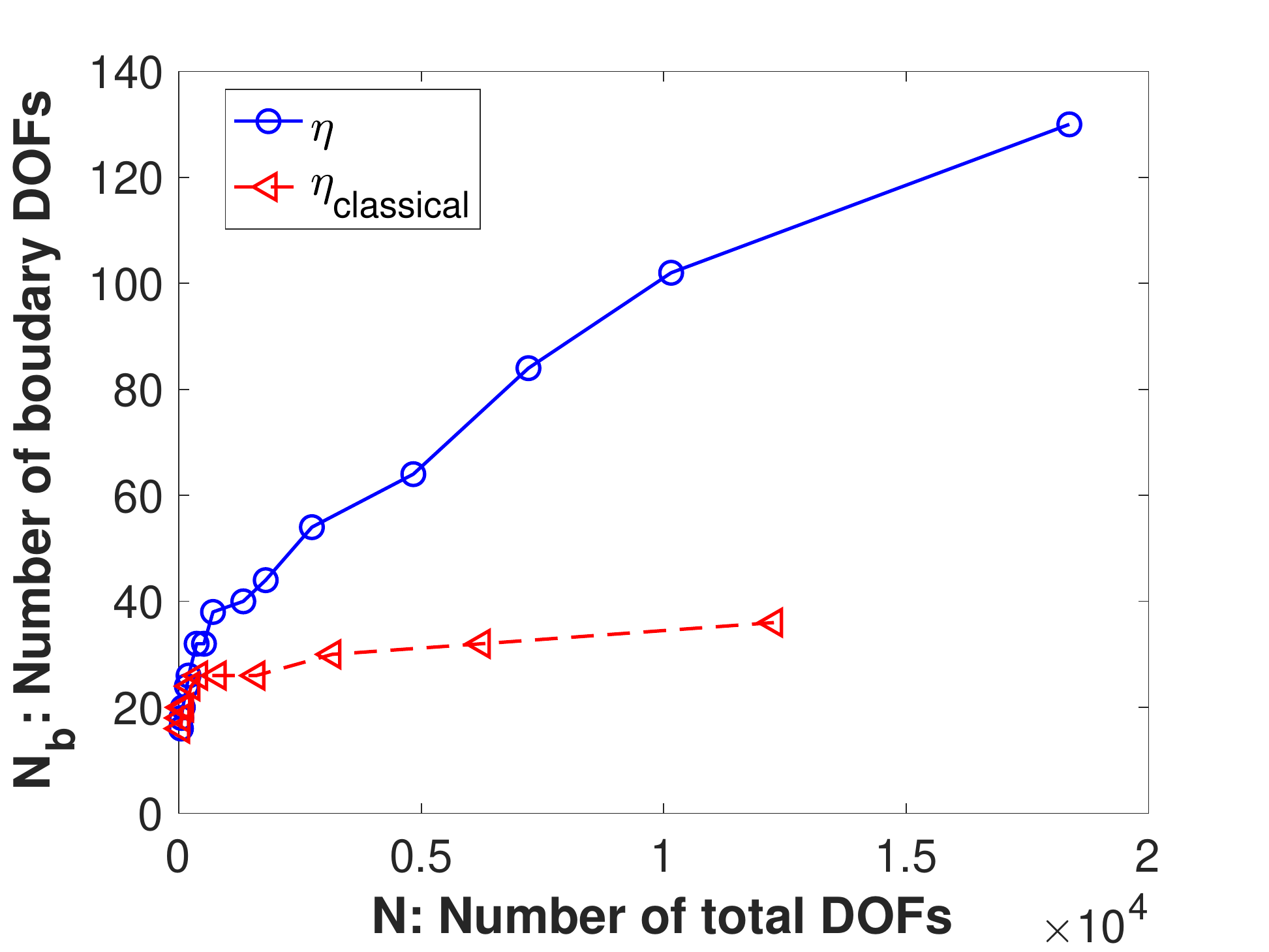}\\
& Nitsche $k=1$& \\
\includegraphics[width=0.30\textwidth]{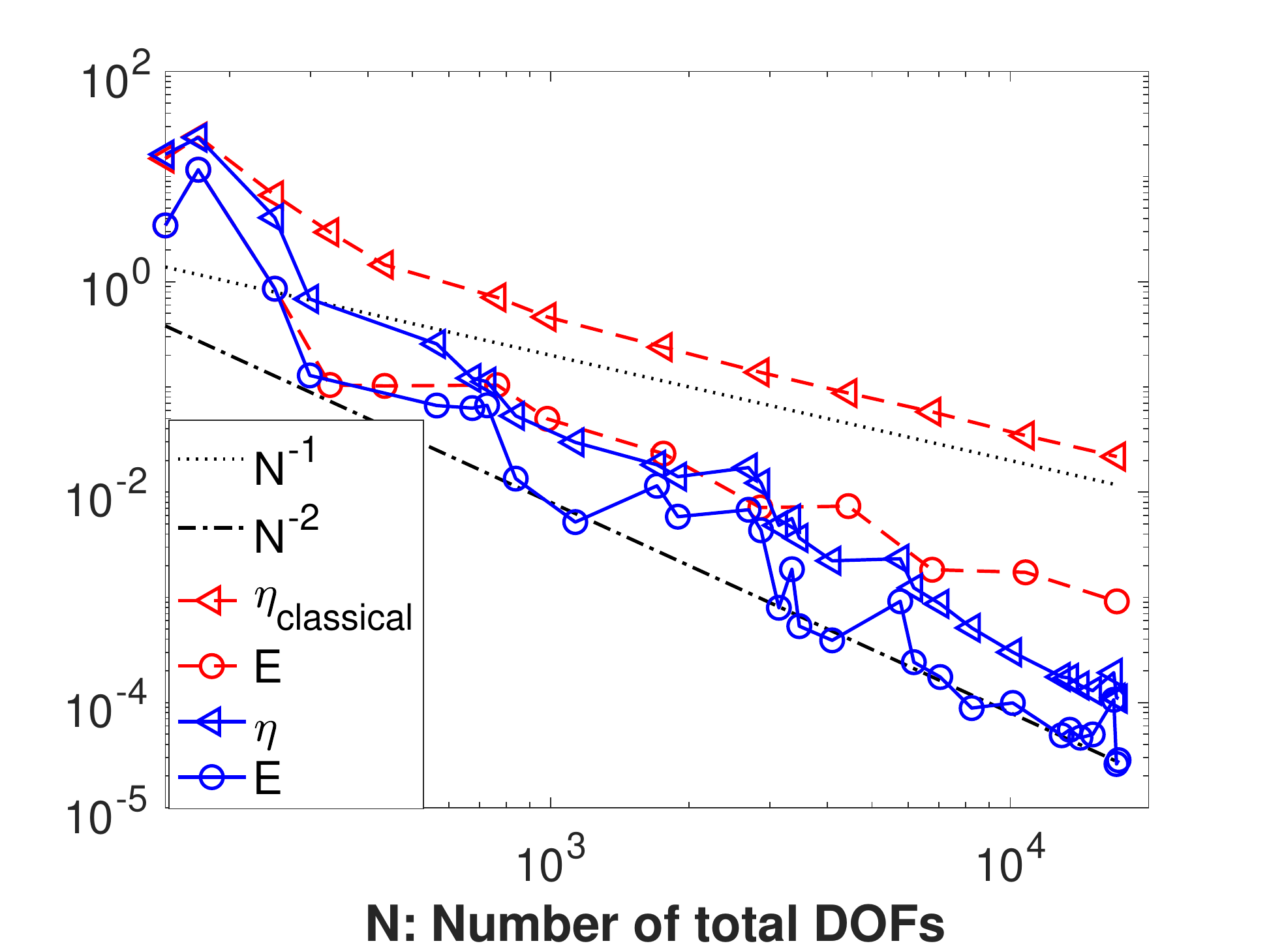}
&\includegraphics[width=0.30\textwidth]{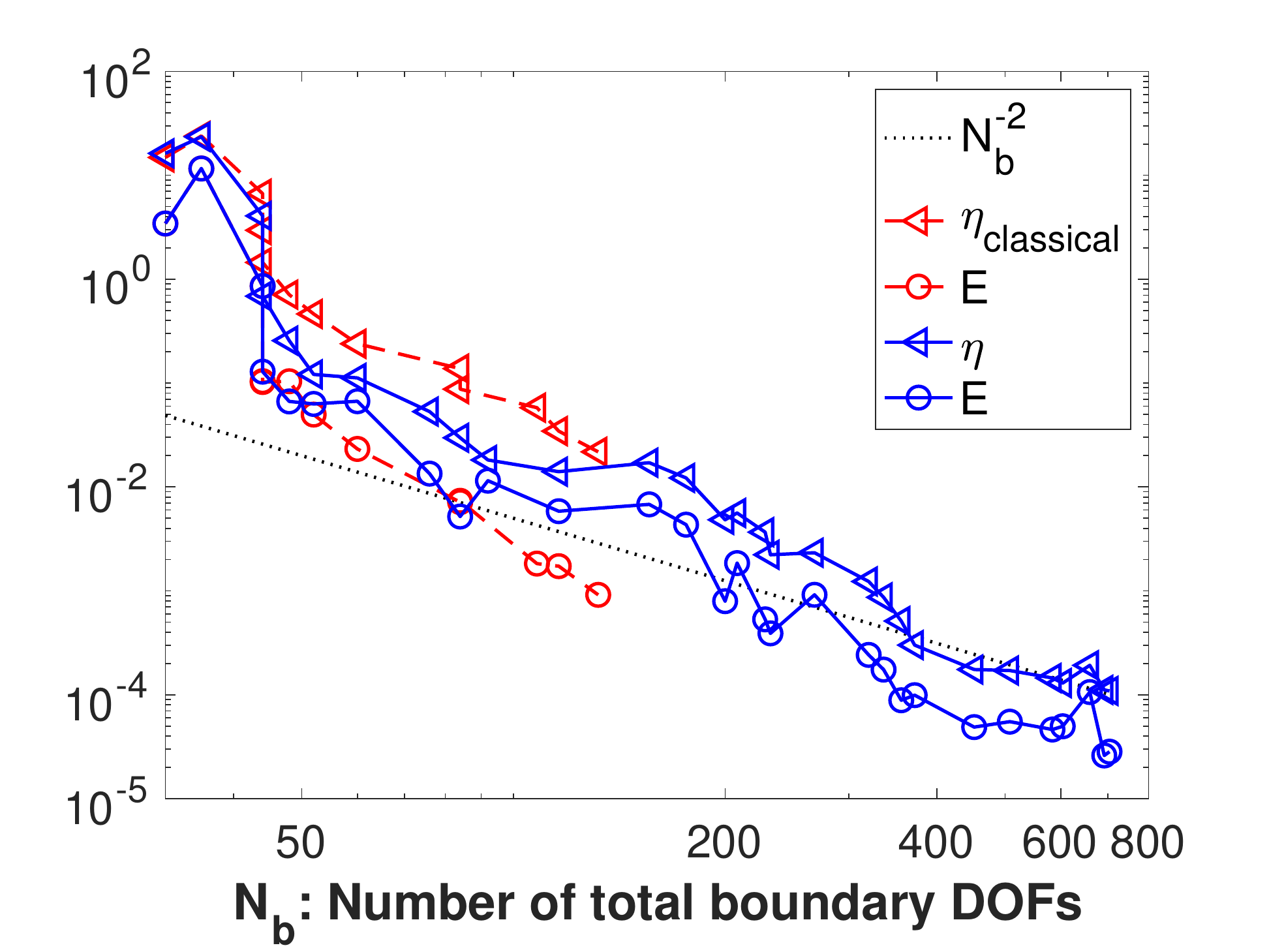}
&\includegraphics[width=0.30\textwidth]{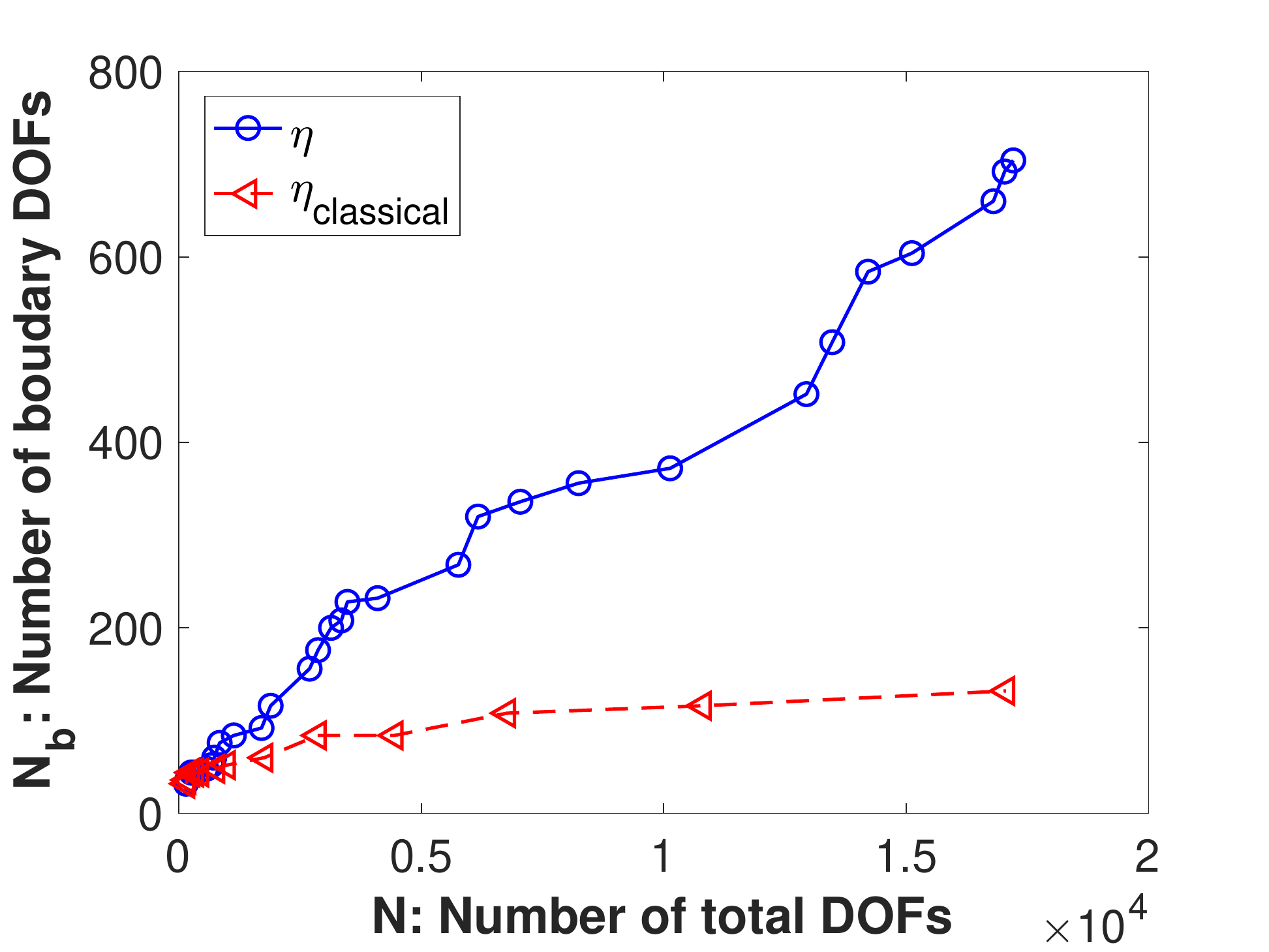}\\
& Nitsche $k=2$& \\
\end{tabular}
\caption{\cref{ex2}.  Convergence comparison for Nitsche's method $(C_2=1)$}
\label{Ex2-Nitsche-error}
\end{figure}

\begin{example}\label{ex3}
\textcolor{blue}{
In this example, we test the L-shaped domain Poisson problem ($a = 1$) with a corner singularity and with an addition interior peak. The true solution has the following representation in polar coordinates:
\[
u(r,\theta) = r^{\alpha} \sin(\alpha \theta) + \exp( - \alpha_p ((x - x_p)^2 + (y- y_p)^2)) \in H^{5/3}(\O) 
\]
where $\alpha = 2/3$, $(\alpha_p, x_p, y_p)$ is the same as in \cref{ex2}, and the $\Omega$ is the L-shaped domain, i.e., $\O = [-1,1]^2 \setminus (0,1)\times(-1,0)$. }
\end{example}
\textcolor{blue}{
In this test, we  set $C_2 = 1$ and $C_2=0.1$ for the first and second order Nitsche's method, respectively.
The convergence rate on uniform meshes is firstly verified  in \cref{tab:ex3}. }
We recall that, according to the  standard a priori error estimates for uniformly refined grids, the error for both the Lagrangian multiplier and the Nitsche's method  behaves like $h^{5/3-1} = h^{2/3} = N^{-1/3}$.}

\begin{table}[ht]
\caption{ \footnotesize{\cref{ex3}: Convergence rates on uniform meshes }}
\label{tab:ex3}
\begin{center}
{
	\begin{tabular}{||c| c  c| c c||}
	\hline
	& \multicolumn{2}{|c|}{Nitsche $k=1$}& 
	\multicolumn{2}{|c|}{Nitsche $k=2$}\\
	\hline
	h            & $E_2$ & rate &$E_2$ & rate \\
	\hline
	1.76E-1  & 1.10E-2&0.86 &6.88E-2&  2.53\\
	8.84E-2  & 1.28E-2&3.10  &1.62E-2& 2.08 \\
	4.42E-2  & 8.84E-3& 0.54 &7.93E-3&  1.03\\
	2.21E-2  & 6.04E-3 & 0.55&5.08E-3&  0.64\\
	1.10E-2  & 4.07E-3& 0.56 &3.36E-3&  0.59\\
	5.52E-3  & 2.72E-3& 0.57 &2.23E-3&  0.59\\
	\hline
	\end{tabular}
	}
\end{center}
\end{table}

\textcolor{blue}{The final meshes obtained for the Nitsche's method are given in \cref{fig:ex3-Nitsche}  and the corresponding convergence results are provided in  \cref{Ex3-Nitsche-error}.
We note that for this problem, even with the low regularity caused by boundary singularity, in both cases the true error $E$ driven by $\eta$ still doubles the convergence rates with respect to those of $\eta_{classical}$.
}

\textcolor{blue}{In this example, in the presence of the corner singularity on the boundary, the adaptive method based on our error estimator shows significantly better performance than the PW method, that has uniform refinement on the boundary. }

%To add later 09/17
\begin{figure}[ht]
\centering
\begin{tabular}{cc}
\includegraphics[width=.3\textwidth]{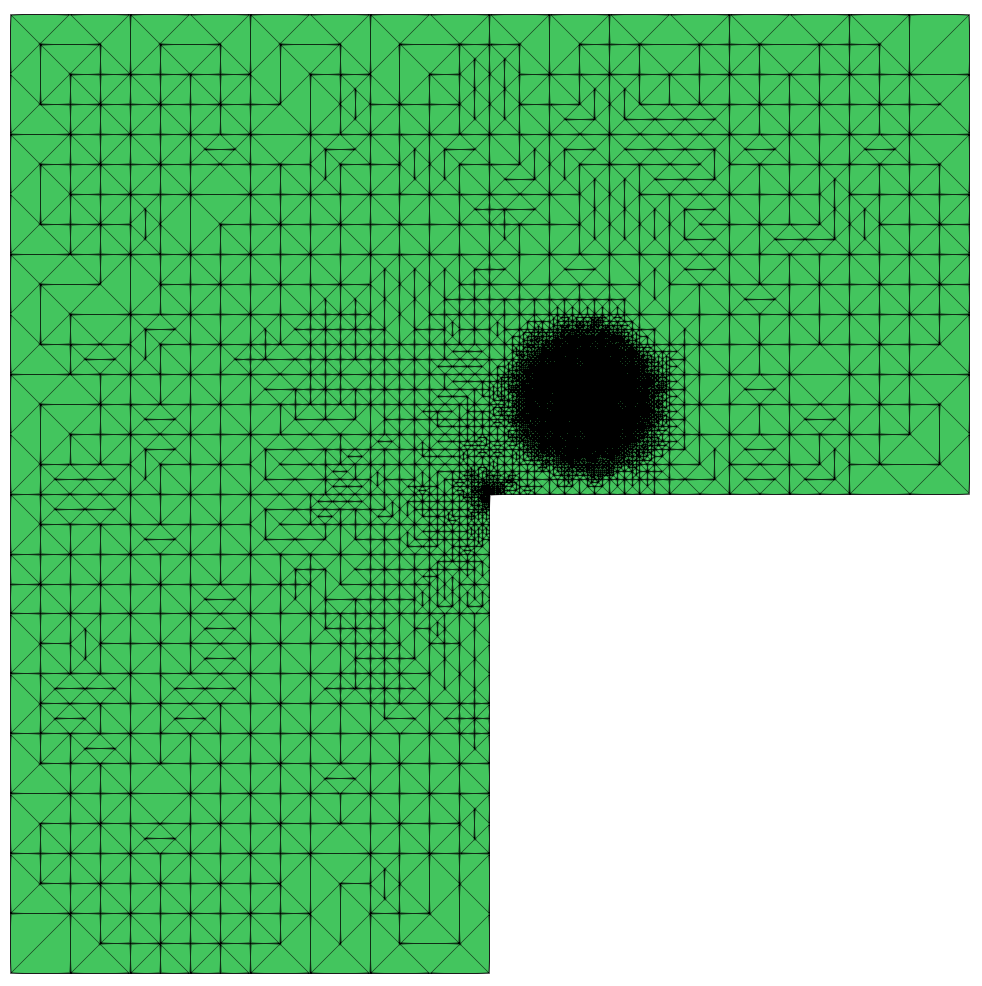} 
  &
 \includegraphics[width=.3\textwidth]{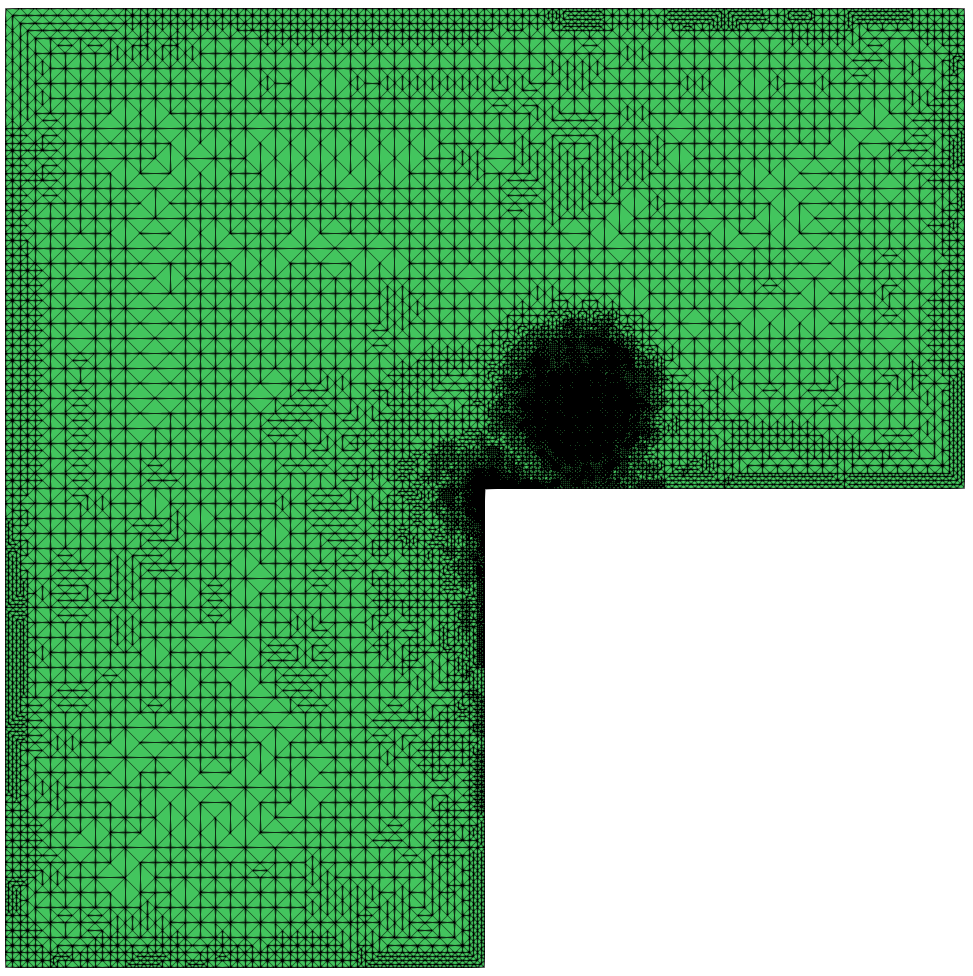}\\
(a) $k=1$, $\eta_{classical}$ &(b) $k=1, C_2=1$, $\eta$\\ 
\includegraphics[width=.3\textwidth]{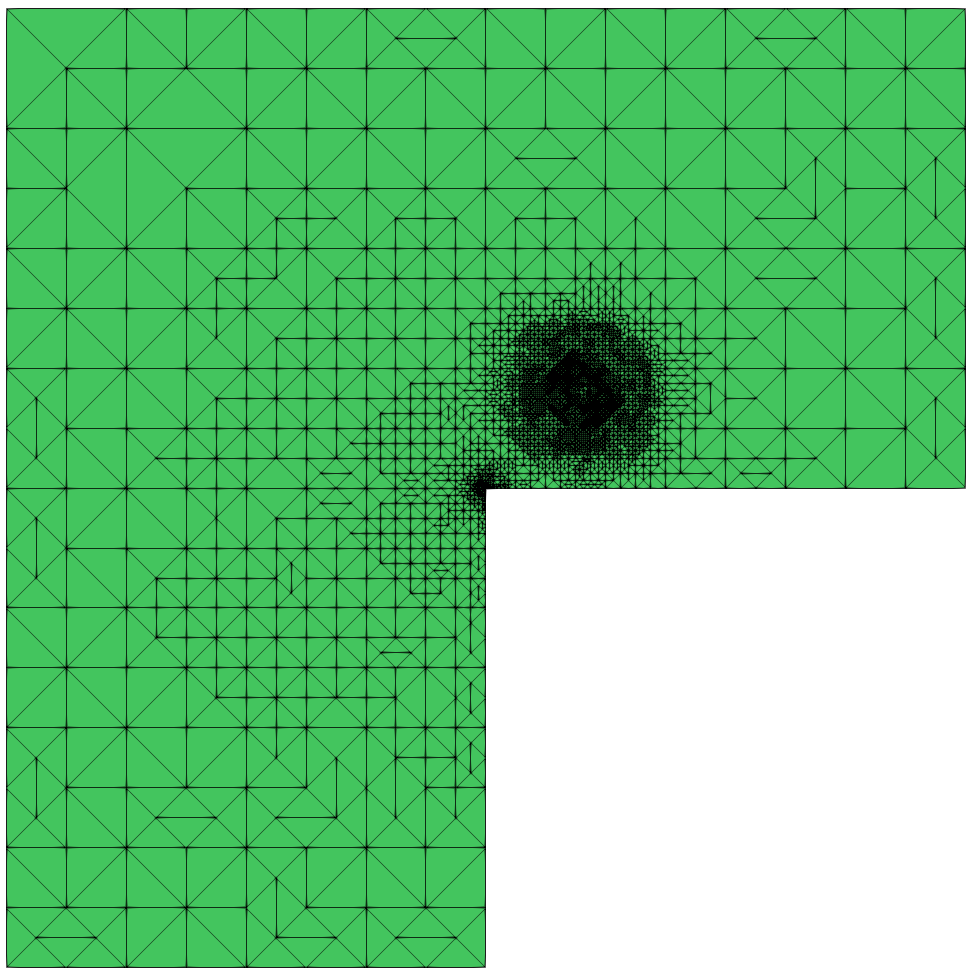} 
  &
\includegraphics[width=.3\textwidth]{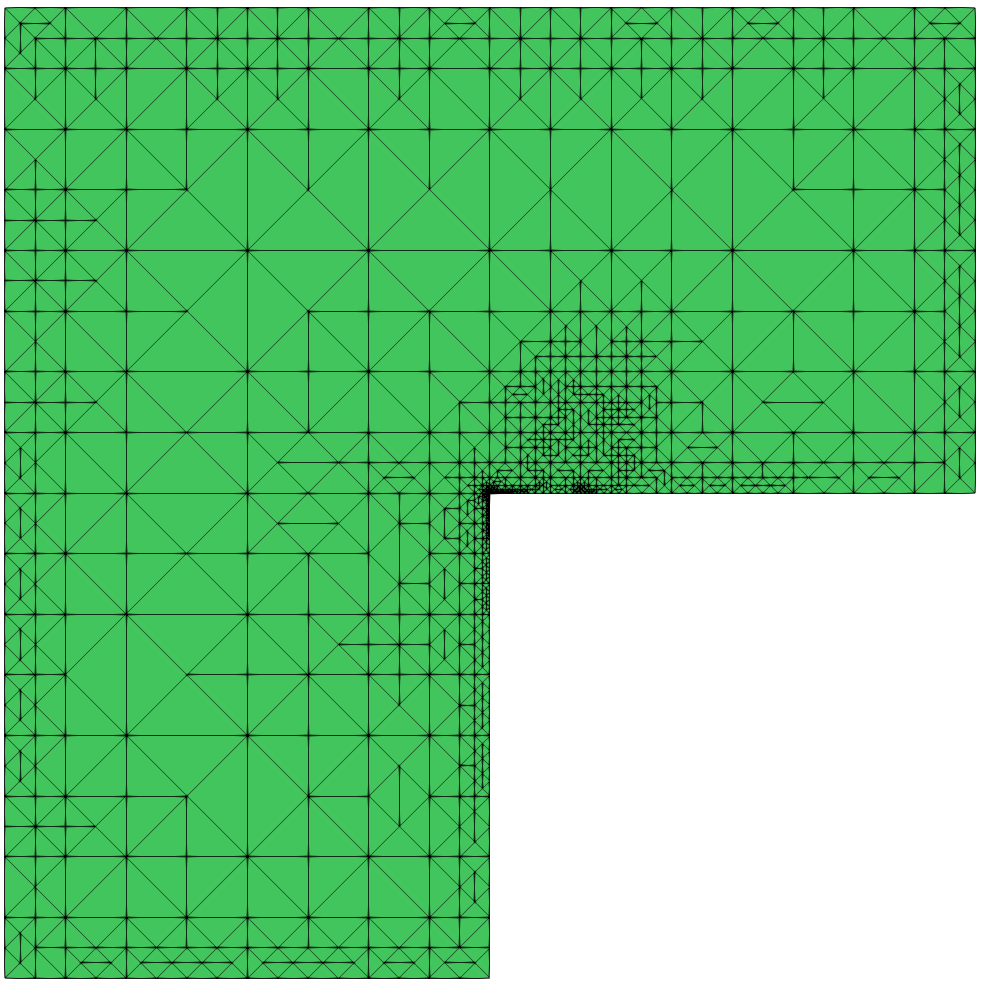}\\
(c)  $k=2$, $\eta_{classical}$ &(d)  $k=2, C_2=0.1$, $\eta$
\end{tabular}
\caption{\cref{ex3}. Final meshes for Nitsche's method.}
\label{fig:ex3-Nitsche}
\end{figure}

%To add later 09/17
 \begin{figure}[ht]
\centering
\begin{tabular}{ccc}
\includegraphics[width=0.30\textwidth]{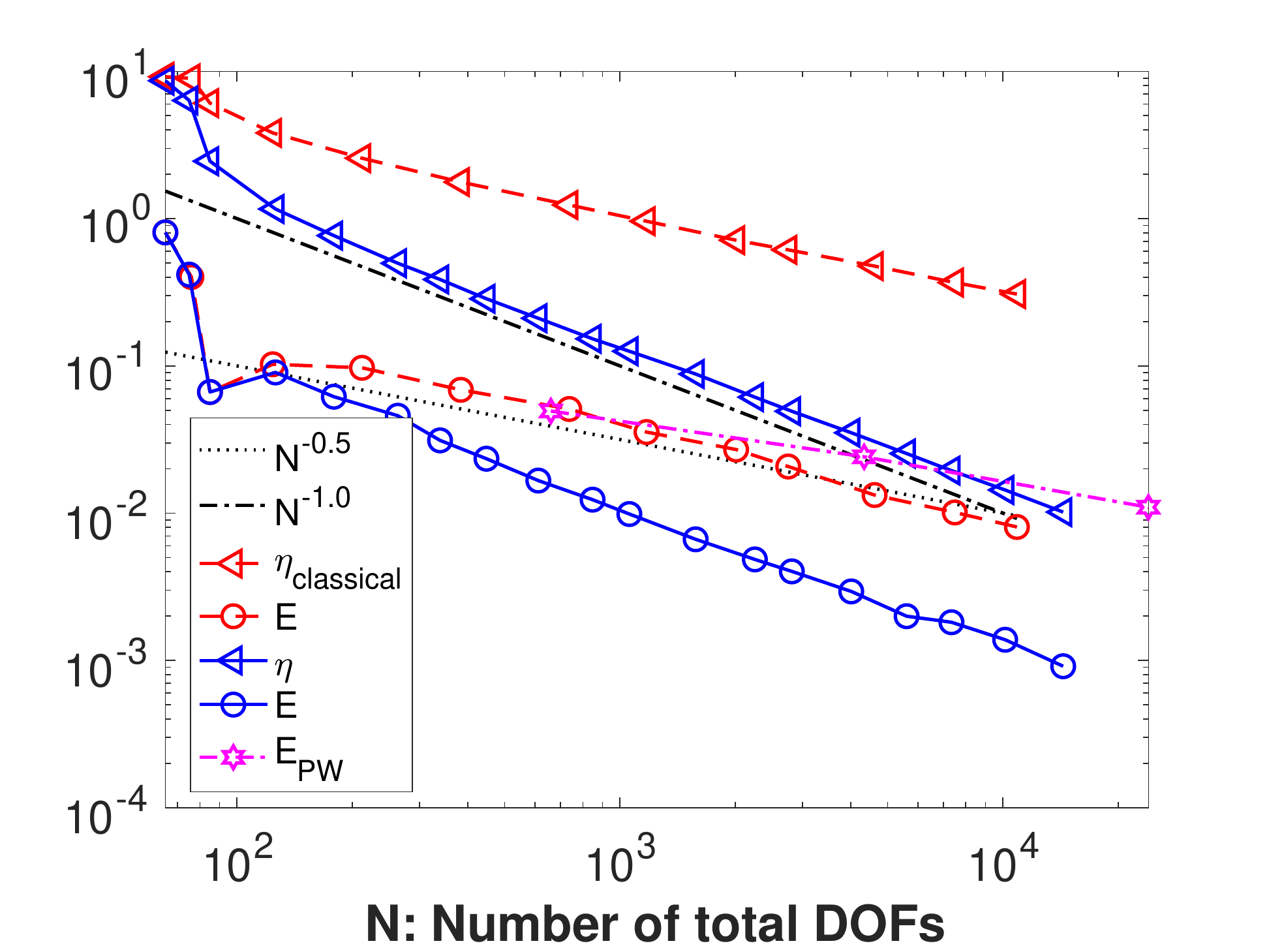}
&\includegraphics[width=0.30\textwidth]{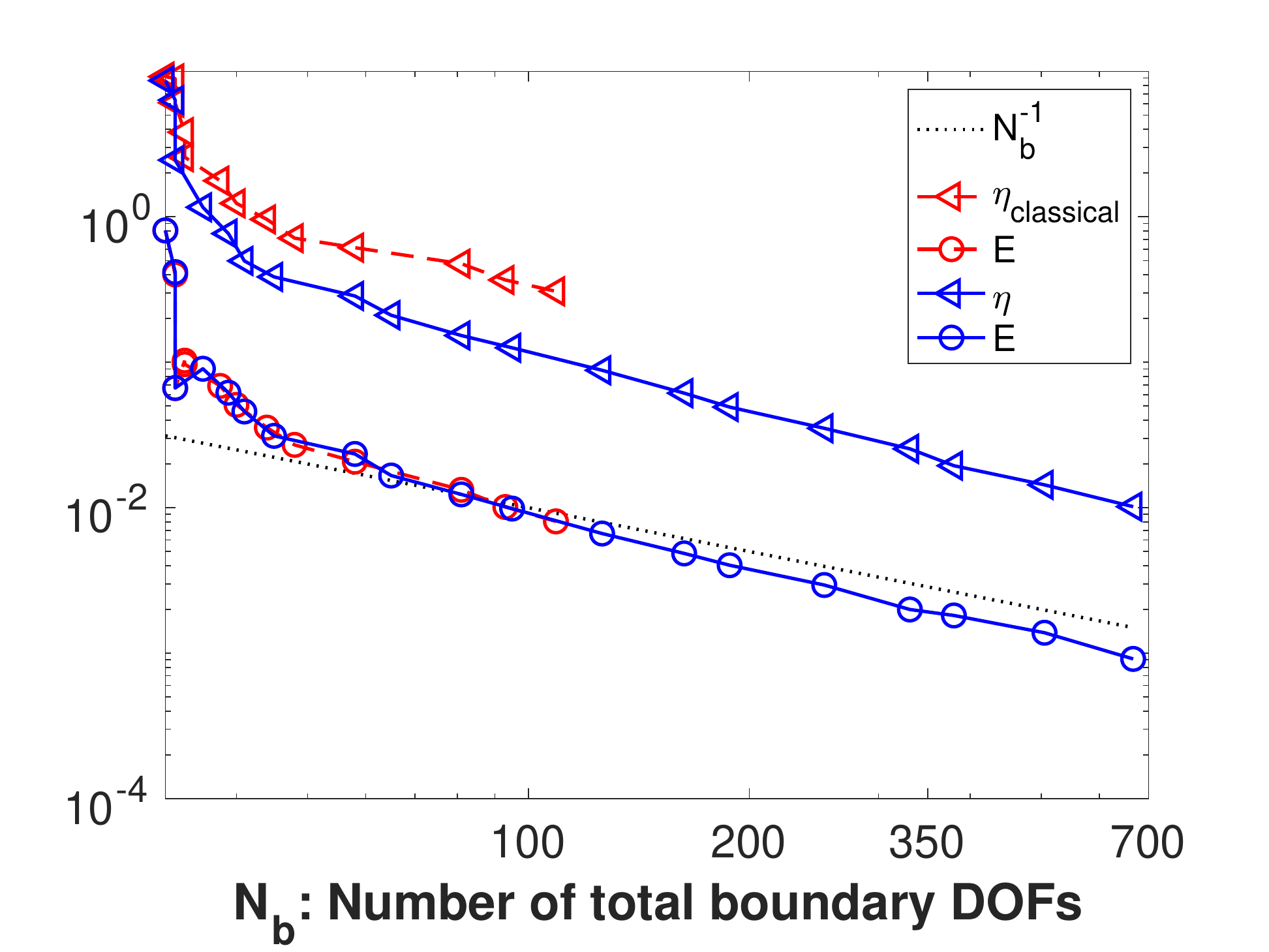}
&\includegraphics[width=0.30\textwidth]{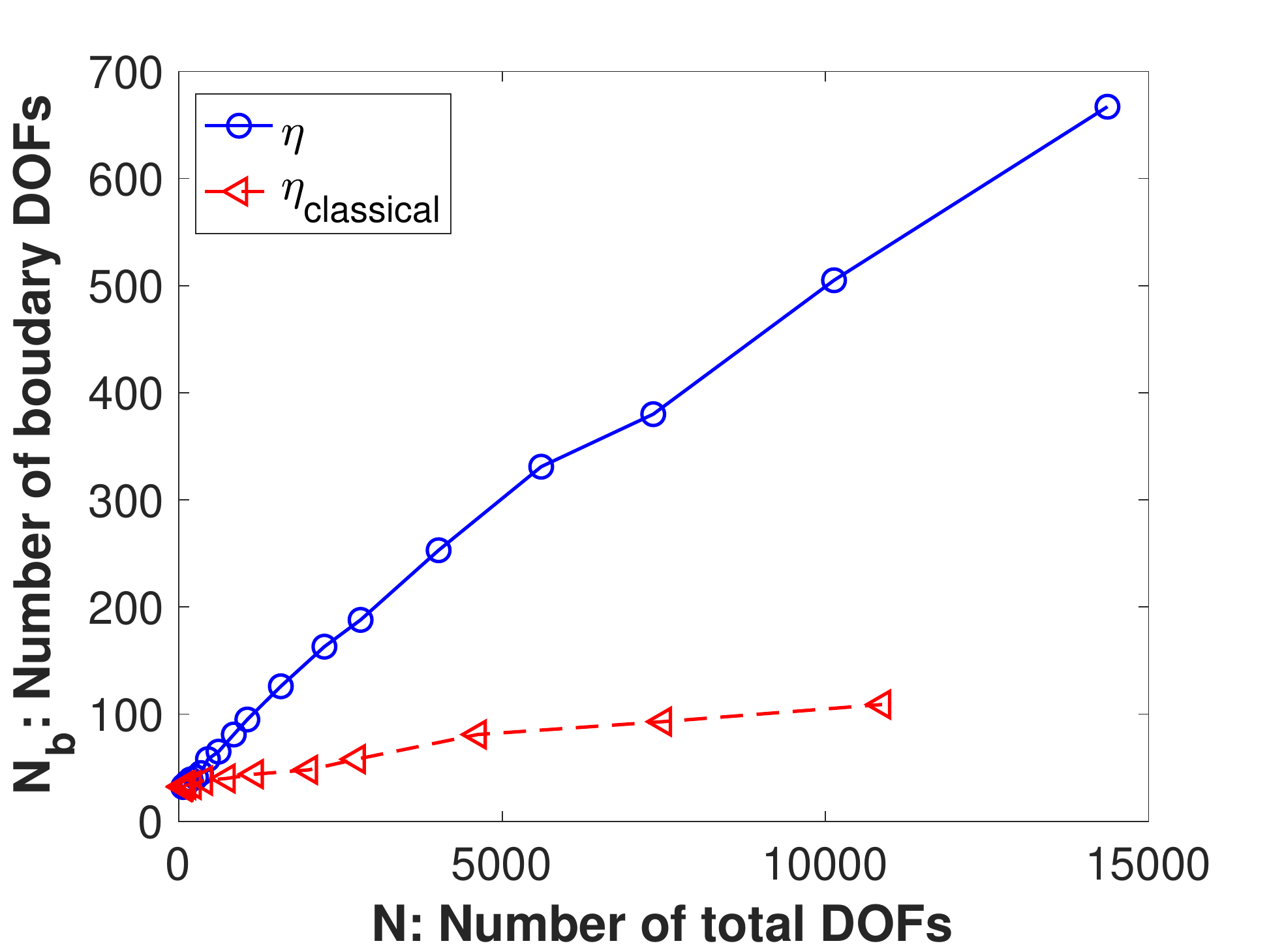}\\
& $k=1$, $C_2=1.0$& \\
\includegraphics[width=0.30\textwidth]{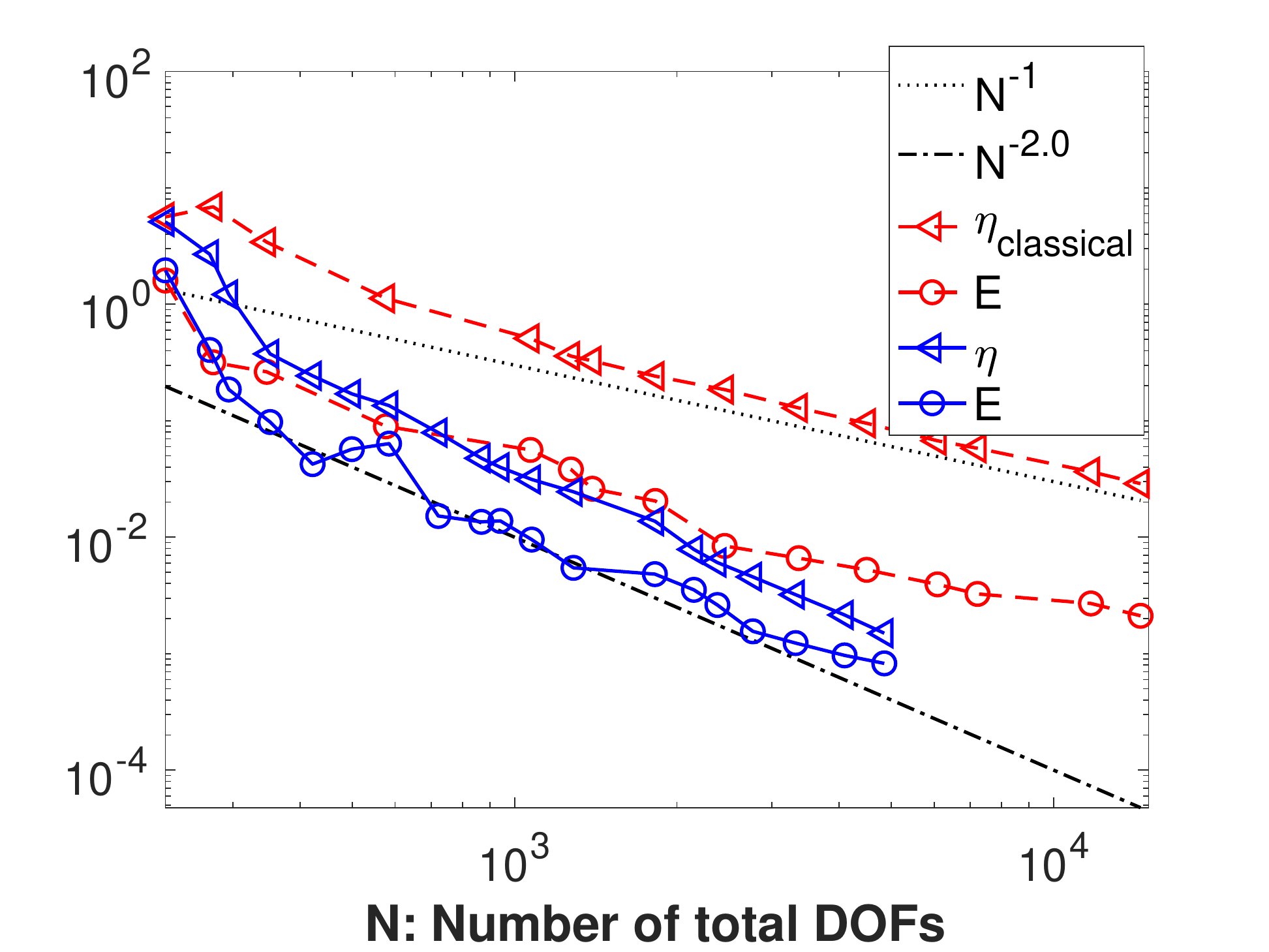}
&\includegraphics[width=0.30\textwidth]{Ex4-Nitsche-k=2-c2=0.1_error-a.pdf}
&\includegraphics[width=0.30\textwidth]{Ex4-Nitsche-k=2-c2=0.1_error-a.pdf}\\
& $k=2$, $C_2=0.1$& \\
\end{tabular}
\caption{\cref{ex3}.  Convergence comparison for Nitsche's method}
\label{Ex3-Nitsche-error}
\end{figure}

\

\textcolor{blue}{Comparing the performance of the estimator in the three examples we see that the adaptive procedure based on the dual wighted residual performs always better than the one based on the classical error estimator. If the solution is smooth, the results obtained by the AMR based on the new estimator are, in terms of error vs number of degrees of freedom, as good as the ones obtained by  using boundary concentrated meshes (of course, in this case, this last method is cheaper, as the mesh is designed a priori and the problem is solved only once). Our adaptive method is particularly advantageous when the solution presents singularities on or close to the boundary (which boundary concentrated meshes, based on a priori analysis, cannot tackle efficiently).}

\bibliographystyle{siamplain}
\bibliography{apost}
\end{document}